\newtheorem{ccounter}{ccounter}[section]
\newtheorem{thm}[ccounter]{Theorem}
\newtheorem{lem}[ccounter]{Lemma}
\newtheorem{cor}[ccounter]{Corollary}
\newtheorem{prop}[ccounter]{Proposition}
\newtheorem{ass}[ccounter]{Assumption}
\newtheorem{ex}[ccounter]{Example}
\theoremstyle{definition}
\newtheorem{defn}[ccounter]{Definition}
\newtheorem{rmk}[ccounter]{Remark}
\def\bet{\begin{thm}}
\def\eet{\end{thm}}
\def\bel{\begin{lem}}
\def\eel{\end{lem}}
\def\bas{\begin{ass}}
\def\eas{\end{ass}}
\def\bec{\begin{cor}}
\def\eec{\end{cor}}
\def\bed{\begin{defn}}
\def\eed{\end{defn}}
\def\bep{\begin{prop}}
\def\eep{\end{prop}}
\def\beq{\begin{equation}}
\def\eeq{\end{equation}}
\def\bea{\begin{equation*}}
\def\eea{\end{equation*}}
\def\brmk{\begin{rmk}}
\def\ermk{\end{rmk}}
\def\tr{\mathrm{tr}}
\def\bex{\begin{ex}}
\def\eex{\end{ex}}
\def\eer{\normalcolor}
\def\gstar{{ \gamma_z^\star }}
\def\rstar{{R_\star}}
\def\vstar{{{\mathcal R}_\star}}
\def\benr{\begin{enumerate}[label=(\roman*)]}
\def\eenr{\end{enumerate}}
\def\N{\mathbb{N}}
\def\bH{\mathbf{H}}
\def\C{\mathbb{C}}
\def\P{\mathbb{P}}
\def\E{\mathbb{E}}
\def\S{\mathbb{S}}
\def\B{\mathcal{B}}
\def\mfc{m_{\mathrm{fc} } }
\def\fc{m_{\mathrm{fc}}}
\def\one{{\mathbbm 1}}
\def\eps{\varepsilon}
\def\dsc{\varrho_{\mathrm{fc} }}
\def\Im{\operatorname{Im}}
\def\Re{\operatorname{Re}}
\newcommand{\bma}{\begin{bmatrix}}
\newcommand{\ema}{\end{bmatrix}}
\def\tr{\operatorname{Tr}}
\def\iu{\mathrm{i}}
\def\supp{\operatorname{supp}}
\def\sgn{\operatorname{sgn}}
\def\av{\operatorname{Av}}
\def\flat{\operatorname{Flat}}
\def\S{{\mathcal S}}
\def\L{{\mathcal L}}
\def\U{{\mathcal U}}
\def\B{{\mathcal B}}
\def\fa{{\widetilde d}}
\def\fc{{\mathfrak c}}
\def\fd{{\mathfrak d}}
\def\supp{\operatorname{supp}}
\def\B{{\mathcal B}}
\def\X{{\mathbf X}}
\def\R{{\mathbf {R}}}
\def\W{{\mathbf {W}}}
\def\q{{\mathbf {q}}}
\def\u{{\mathbf {u}}}
\def\e{{\mathbf {e}}}
\def\bA{{\mathbf {A}}}
\def\S{{\mathbf {S}}}
\def\eps{\varepsilon}
\def\epsilon{\varepsilon}
\def\cplus{{\mathbb H} }
\def\rplus{{\mathbb R_+} }
\newcommand{\dom}{\lesssim}
\newcommand{\threepartdef}[6]
{
	\left\{
		\begin{array}{lll}
			#1 & \mbox{if } #2 \\
			#3 & \mbox{if } #4 \\
			#5 & \mbox{otherwise.} #6
		\end{array}
	\right.
}
\DeclareMathOperator{\sign}{sign}
\begin{document}
\title{Eigenvector statistics of L\'evy matrices}
\author{Amol Aggarwal}
\thanks{A.A.\ is partially supported by NSF grant NSF DMS-1664619, the NSF Graduate Research Fellowship under grant DGE-1144152, and a Harvard Merit/Graduate Society Term-time Research Fellowship.}
\author{Patrick Lopatto}
\thanks{P.L.\ is partially supported by NSF grant DMS-1606305, NSF grant DMS-1855509, and the NSF Graduate Research Fellowship Program under grant DGE-1144152.}
\author{Jake Marcinek}

\begin{abstract}
		
		We analyze statistics for eigenvector entries of heavy-tailed random symmetric matrices (also called L\'{e}vy matrices) whose associated eigenvalues are sufficiently small. We show that the limiting law of any such entry is non-Gaussian, given by the product of a normal distribution with another random variable that depends on the location of the corresponding eigenvalue. Although the latter random variable is typically non-explicit, for the median eigenvector it is given by the inverse of a one-sided stable law. Moreover, we show that different entries of the same eigenvector are asymptotically independent, but that there are nontrivial correlations between eigenvectors with nearby eigenvalues. Our findings contrast sharply with the known eigenvector behavior for Wigner matrices and sparse random graphs.
	
\end{abstract}

\maketitle

{\setcounter{tocdepth}{1}
	\tableofcontents
}

\section{Introduction} \label{s:int}

A central tenet of random matrix theory is that the spectral properties of large random matrices should be independent of the distributions of their entries.
This phenomenon is known as \emph{universality}, and it originates from investigations of Wigner in the 1950s \cite{wigner1955characteristic}. One realization of this phenomenon is the \emph{Wigner--Dyson--Mehta conjecture}, which asserts the universality of local eigenvalue statistics of $N\times N$ real symmetric (or complex Hermitian) Wigner matrices in the bulk of the spectrum. Over the past decade, this conjecture has been resolved and generalized to an extensive collection of random matrix models \cite{bourgade2016fixed,erdos2010bulk,erdos2011universality,erdos2012local,erdos2017dynamical,erdos2012bulk,johansson2012universality,johansson2001universality,lee2014necessary,tao2012random,tao2011random, ajanki2017singularities,ajanki2017universality,ajanki2016local,che2017universality, bauerschmidt2017bulk,erdos2013spectral,erdos2012spectral,huang2015spectral,huang2015bulk,bourgade2018survey, bourgade2017universality, bourgade2018random, bourgade2019random, yang2018random, aggarwal2019bulk}.

Universality for the limiting distributions of eigenvector entries of Wigner matrices was proven more recently in \cite{bourgade2013eigenvector} by Bourgade and Yau. There, they introduced the eigenvector moment flow, a system of differential equations with random coefficients that govern the evolution of the moments of eigenvector entries of a matrix under the addition of Gaussian noise. 
Through a careful analysis of these dynamics, they prove asymptotic normality for the eigenvector entries of Wigner matrices. Extensions of this method later enabled the analysis of eigenvector statistics for sparse and deformed Wigner matrices in \cite{benigni2017eigenvectors, bourgade2017eigenvector}, and for other eigenvector observables in \cite{bourgade2018random,benigni2019fermionic}.

The matrices considered in these works all have entries of finite variance. The question of whether universality persists for matrices with entries of infinite variance was first raised in 1994 by Cizeau and Bouchaud \cite{cizeau1994theory}. Such matrices are now believed to be better suited for modeling heavy-tailed phenomena in physics \cite{bouchaud1995more, sornette2006critical,cizeau1993mean}, finance \cite{heiny2019eigenstructure,galluccio1998rational, bouchaud1997option, bun2017cleaning, bouchaud1998taming, bouchaud2009financial,laloux1999noise,laloux2000random}, and machine learning \cite{martin2018implicit,martin2019heavy,mahoney2019traditional}. They may therefore be regarded as exemplars of a new universality class for highly correlated systems. 

Concretely, \cite{cizeau1994theory} introduced a class of symmetric matrices called \emph{L\'evy matrices}, whose entries are random variables in the domain of attraction of an $\alpha$-stable law, and made a series of predictions regarding their spectral and eigenvector behavior. One such prediction concerns the global eigenvalue distribution. Given a L\'evy matrix $\mathbf{H}$, \cite{cizeau1994theory} conjectured that as $N$ tends to $\infty$, its empirical spectral distribution $\mu_\mathbf{H}$ should converge to a deterministic, heavy-tailed measure $\mu_\alpha$. This was later proven by Ben Arous and Guionnet in \cite{arous2008spectrum}. The measure $\mu_{\alpha}$ contrasts with the empirical spectral densities of the other finite variance models studied previously, which are all compactly supported. 
 
 The main predictions in \cite{cizeau1994theory} concerned eigenvector (de)localization and local eigenvalue statistics for L\'evy matrices. The more recent predictions of Tarquini, Biroli, and Tarzia are slightly different and can be stated as follows \cite{tarquini2016level}. First, for $\alpha \in [1,2)$, all eigenvectors of $\textbf{H}$ corresponding to finite eigenvalues are completely delocalized, and these eigenvalues exhibit local statistics matching those of the Gaussian Orthogonal Ensemble (GOE) as $N$ tends to infinity.  Second, for $\alpha \in (0,1)$, there exists a \emph{mobility edge} $E_\alpha > 0$ separating a regime with delocalized eigenvectors and GOE local statistics around small energies $E \in (-E_{\alpha} , E_{\alpha})$ from one with localized eigenvectors and Poisson local statistics around large energies $E \notin [-E_{\alpha}, E_{\alpha}]$.

 Partial forms of eigenvector localization and delocalization in the regimes listed above were proven by Bordenave and Guionnet in \cite{bordenave2013localization,bordenave2017delocalization}. Later, in \cite{aggarwal2018goe}, complete eigenvector delocalization and convergence to GOE local statistics were established in the two delocalized phases mentioned above. For $\alpha \in (0, 1)$, these delocalization results were only proven for eigenvalues in a non-explicit neighborhood of $0$, and extending them to the conjectured mobility edge remains open.

The transition identified by \cite{tarquini2016level,cizeau1994theory} is also known as an \emph{Anderson transition}, a concept fundamental to condensed matter physics that describes phenomena such as metal--insulator transitions \cite{abrahams201050,anderson1958absence,anderson1978local, mott1949basis,mott1987mobility}. It is widely believed to exist in the context of random Schr\"odinger operators \cite{abou1973selfconsistent, abou1974self, aizenman2011absence, aizenman2013resonant, bapst2014large}, but rigorously establishing this statement has
remained an important open problem in mathematical physics for decades. L\'evy matrices provide one of the few examples of a random matrix ensemble for which such a transition
 is also believed to appear. Partly for this reason, they have been a topic of intense study for both mathematicians and physicists over the past 25 years \cite{auffinger2009poisson,arous2008spectrum,benaych2014central,benaych2014central,belinschi2009spectral,biroli2007top,bordenave2011spectrum,bordenave2017delocalization,bordenave2013localization,burda2006random,soshnikov2004poisson,tarquini2016level,biroli2007extreme, auffinger2016extreme, monthus2016localization}.

While the above results and predictions address (de)localization of L\'evy matrix eigenvectors, little is known about refined properties of their entry fluctuations. In \cite{benaych2014central}, Benaych-Georges and Guionnet showed that averages of $O (N^2)$ of these eigenvector entries converge to Gaussian processes, after scaling by $N^{-1 / 2}$. However, until now, we have not been aware of any results or predictions concerning fluctuations for individual entries of L\'evy matrix eigenvectors.

In this paper we establish several such results, which in many respects contrast with their known counterparts for Wigner matrices (and all other random matrix models for which the eigenvector entry distributions have previously been identified). We establish, for almost all $\alpha <2$, the following statements concerning the unit (in $L^2$) L\'evy eigenvectors $\mathbf{u}_k = \big( u_k (1), u_k (2), \ldots , u_k (N) \big)$ whose associated eigenvalues $\lambda_k \approx E$ are sufficiently small.
\begin{enumerate}
\item \label{novelty} An eigenvector entry $\sqrt{N} u_j(i)$ is not asymptotically normal: its square converges to $\mathcal{N}^2 \cdot \mathcal{U}_{\star} (E)$ as $N$ tends to $\infty$, where $\mathcal N$ is a standard normal random variable and $\mathcal{U}_{\star} (E)$ is an independent (non-constant and typically non-explicit) random variable that depends on $E$.
\item Different entries of the same eigenvector are asymptotically independent.

\item Entries of different eigenvectors with the same index are not asymptotically independent: if $k_1, k_2, \ldots , k_n \in [1, N]$ and $i \in [1, N]$ are indices such that $\lambda_{k_n} \approx E$, then the vector $\big( N \u_{k_1}(i )^2, N \u_{k_2}(i )^2, \dots , N \u_{k_n}(i )^2 \big)$ converges to $\big( {\mathcal N}^2_1 \cdot \mathcal{U}_{\star} (E), {\mathcal N}^2_2\cdot \mathcal{U}_{\star} (E), \dots,{\mathcal N}^2_n \cdot \mathcal{U}_{\star} (E) \big)$, where the $\mathcal N_j$ are i.i.d.\ standard Gaussians that are independent from $\mathcal{U}_{\star} (E)$.

\item  The law of $\mathcal{U}_{\star} (0)$ is given explicitly as the inverse of a one-sided $\frac{\alpha}{2}$-stable law. In particular, all asymptotic moments of the median eigenvector are also explicit.
\end{enumerate}

To contextualize our results, we recall the asymptotic normality statements for Wigner eigenvectors proved in \cite{bourgade2013eigenvector}. First, an individual eigenvector entry converges to a standard normal random variable. Second, different entries of the same eigenvector are asymptotically independent. Third, the same is true for entries of different eigenvectors with the same index. Our results show that, although the second of these phenomena persists in the L\'{e}vy case, the first and third do not. We further note that while \cite{aggarwal2018goe} showed that L\'evy matrices exhibit GOE local statistics at small energy, which only differ through a rescaling factor corresponding to the eigenvalue density, the eigenvector statistics follow a family of distinct random variables that vary in the energy parameter $E \in \mathbb R$. This is again unlike the Wigner eigenvectors, which display the same Gaussian statistics throughout the spectrum.

Our work confronts one of the major differences between L\'evy matrices and Wigner matrices: the non-concentration of the resolvent entries. In the Wigner case, these diagonal resolvent entries converge to a deterministic quantity (the Stieltjes transform of the semicircle law) as $N$ tends to $\infty$. However, for L\'evy matrices $\mathbf{H}$, the diagonal entries $G_{jj}(z)$ of the resolvent $\mathbf{G}(z) = ( \mathbf{H} - z)^{-1}$ converge for fixed $z \in \mathbb{H}$ to nontrivial (complex) random variables $R_\star(z)$ as $N$ tends to $\infty$ \cite{bordenave2011spectrum}. This is the mechanism that generates the non-Gaussian scaling limit for the limiting eigenvector entries. Indeed, the random variable $\mathcal{U}_{\star} (E)$ mentioned above is defined as a (multiple of a) weak limit of $R_\star (E + \iu \eta)$, as $\eta$ tends to $0$.

Our proof strategy is dynamical. First, we define a matrix $\mathbf{X}$, which is coupled to the L\'evy matrix $\mathbf{H}$ and obtained by setting all sufficiently small (in absolute value) entries of $\mathbf{H}$ to zero. We also introduce the Gaussian perturbation $\mathbf{X}_s = \mathbf{X} + \sqrt{s} \mathbf{W}$, where $\mathbf{W}$ is a GOE and $s \ll 1$. Under a certain choice of $s = t$, we are able to show that the eigenvector statistics of $\mathbf{H}$ are asymptotically the same as those of $\mathbf{X}_t$. Second, we identify the moments of the eigenvector entries of $\mathbf{X}_t$ in terms of entries of the resolvent matrix $\mathbf{R}(t, z) = ( \mathbf{X}_t - z )^{-1}$, where $z = E + \iu \eta$ and $\eta$ tends to $0$ as $N$ tends to $\infty$. Third, we compute the large $N$ limit of these resolvent entries and deduce the above claims from the behavior of the resulting scaling limits. We now describe the steps of our argument, and their associated challenges, in more detail.

1. The first step is a comparison of eigenvector statistics, which has been achieved before for Wigner matrices with entries matching in the first four moments \cite{knowles2013eigenvector,tao2012random}. 
However, these results do not apply to L\'evy matrices, since the second moments of their entries are infinite. Instead, we use the comparison scheme introduced in \cite{aggarwal2018goe} that conditions on the locations of the large entries of $\mathbf{H}$ and matches moments between the small entries of $\mathbf{H}$ and the Gaussian perturbation $\sqrt{t}\mathbf{W}$ in $\mathbf{X}_t = \mathbf{X} + \sqrt{t} \mathbf{W}$; these have all moments finite by construction. While \cite{aggarwal2018goe} considered the comparison for resolvent elements, we apply it to general smooth functions of the matrix entries and write the eigenvector statistics as such functions using ideas from \cite{bourgade2017eigenvector,huang2015bulk}.

2. The second step uses the eigenvector moment flow to show that moments of the eigenvector entries of $\textbf{X}_t$ approximate moments of the $\Im \textbf{R} (t, z)$ entries. As in \cite{bourgade2013eigenvector, bourgade2017eigenvector}, a primary idea here is to apply the maximum principle to show under these dynamics that eigenvector moment observables equilibriate after a short period of time to a polynomial in the entries of $\Im \textbf{R} (t, z)$. However, unlike in those previous works, the entries of $\Im \textbf{R} (s, z)$ do not concentrate and therefore might be unstable under the dynamics. To address this, we condition on the initial data $\mathbf{X}_0$, which one might hope renders the $R_{jj} (s, z)$ essentially constant under the flow $\mathbf{X}_s$ for $s \ll 1$. Unfortunately, we cannot show this directly, and in fact it appears that these resolvent entries can be unstable in the beginning of the dynamics even after this conditioning. Therefore, we run the flow for a short time $\tau$ before beginning the main analysis. This has a regularizing effect and ensures the stability of the resolvent entries of $\mathbf{X}_s$ for $s \in [\tau, t]$. Our analysis then proceeds by running the dynamics for a further amount of time $t - \tau \gg N^{-1 / 2}$ to prove convergence to equilibrium, given this initial regularity. 

3. The third step asymptotically equates moments of $\Im R_{ii}(t, z)$ with those of $\Im R_\star (z)$, as $\eta = \Im z$ tends to $0$ and $N$ tends to $\infty$ simultaneously. To analyze the former, we first use the Schur complement formula and a certain integral identity to express arbitrary moments of $\Im R_{ii} (t, z)$ through the $\frac{\alpha}{2}$-th moments of (real and imaginary parts of) $R_{ii} (t, z)$, as in \cite{aggarwal2018goe,bordenave2017delocalization,bordenave2013localization}. Next, using a local law from \cite{aggarwal2018goe}, we approximate these $\frac{\alpha}{2}$-th moments by corresponding ones for $R_\star(z)$. To analyze the moments of $\Im R_\star(z)$, we use the same integral identity and a recursive distributional equation for $R_\star(z)$ from \cite{bordenave2011spectrum} to express them through $\frac{\alpha}{2}$-moments of (real and imaginary parts of) $R_\star(z)$. We then observe the two expressions are equal as $N$ tends to $\infty$. 

The remainder of this paper is organized as follows. In \Cref{s:results} we state our results in detail. In \Cref{s:proofs} we give a full proof outline and establish our main results, assuming several preliminary claims which are shown in the remainder of the paper. \Cref{s:preliminary} recalls results on L\'evy matrices from previous works that are required for the argument. \Cref{s:comparison} details the comparison part of the argument. \Cref{s:dynamics} analyzes the eigenvector moment flow. \Cref{s:scaling} computes the scaling limits of the resolvent entries mentioned above.
\Cref{s:appendixA} provides some preliminary results needed in the previous sections, and \Cref{s:appendixdist} addresses convergence in distribution. In \Cref{matrixeigenvectors}, we discuss quantum unique ergodicity (QUE) for eigenvectors of L\'{e}vy matrices, whose analogue for Wigner matrices was established in \cite{bourgade2013eigenvector}. \\

\noindent {\bf Acknowledgments.} The authors thank Giulio Biroli, Jiaoyang Huang, and Horng-Tzer Yau for helpful conversations. We also thank an anonymous referee for their comments.

\section{Results} \label{s:results}

\subsection{Definitions}

Denote the upper half plane by $\mathbb{H} = \left\{ z \in \mathbb{C}: \Im z > 0\right\}$. Set $\rplus = [ 0,\infty)$, set $\mathbb{K} = \{ z \in \mathbb{C}: \Re z > 0 \}$, and set $\mathbb{K}^+ = \overline{\mathbb{K} \cap \mathbb{H}}$ to be the closure of the positive quadrant of the complex plane. We also let $\mathbb{S}^1 = \{ z\in \C : |z|  =1 \}$ be the unit circle and define $\mathbb{S}_+^1 = \overline{\mathbb{K}^+ \cap \mathbb{S}}$.

Fix a parameter $\alpha \in (0, 2)$, and let $\sigma > 0$ and $\beta \in [-1, 1]$ be real numbers. A random variable $Z$ is a \emph{$(\beta, \sigma)$ $\alpha$-stable law} if it has the characteristic function 
\begin{align}
\label{betasigmaalphalaw}
\mathbb{E} \left[ e^{ \iu t Z} \right] = \exp \Big( - \sigma^{\alpha} |t|^{\alpha} \big( 1 - \iu \beta  \sgn (t) u \big) \Big), \quad \text{for all $t \in \mathbb{R}$},
\end{align}

\noindent where $u = u_{\alpha} = \tan \big( \frac{\pi \alpha}{2} \big)$ if $\alpha \ne 1$ and $u = u_1 = -\frac{2}{\pi} \log |t|$ if $\alpha = 1$. Note $\beta = 0$ ensures that $Z$ is symmetric. The case $\beta =1$ is known as a \emph{one-sided $\alpha$-stable law} and is always positive.

We now define the entry distributions we consider in this paper. Our proofs and results should also apply to wider classes of distributions, but we will not pursue this here (see the similar remark in \cite[Section 2]{aggarwal2018goe} for more on this point).

\bed
	
	\label{momentassumption}
	
	Let $Z$ be a $(0, \sigma)$ $\alpha$-stable law with
	\begin{flalign}
	\label{stable}
	\sigma = \left( \displaystyle\frac{\pi}{2 \sin \big( \frac{\pi \alpha}{2} \big) \Gamma (\alpha)} \right)^{1 / \alpha} > 0.
	\end{flalign} 
	Let $J$ be a symmetric\footnote{By \emph{symmetric}, we mean that $J$ has the same law  as $-J$.} random variable (not necessarily independent from $Z$) such that $\E [J^2] < \infty$, $Z+J$ is symmetric, and 
\begin{flalign}
\label{probabilityxij}
\frac{C_1}{\big( |t| + 1 \big)^\alpha} \le \P \big[ |Z + J | \ge t   \big] \le \frac{C_2}{\big( |t| + 1 \big)^\alpha} \quad \text{for each $t \ge 0$ and some constants $C_1, C_2 > 0$.}
\end{flalign} 

	\noindent Denoting $\mathfrak{z} = Z + J$, the symmetry of $J$ and the condition $\mathbb{E} [J^2] < \infty$ are equivalent to imposing a coupling between $\mathfrak{z}$ and $Z$ such that $\mathfrak{z} - Z$ is symmetric and has finite variance, respectively. 
	
	For each positive integer $N$, let $\{ H_{ij} \}_{1 \le i \le j \le N}$ be mutually independent random variables that each have the same law as $N^{-1 / \alpha} (Z + J) = N^{-1 / \alpha} \mathfrak{z}$. Set $H_{ij} = H_{ji}$ for each $i, j$, and define the $N \times N$ random matrix $\textbf{H} = \textbf{H}_N = \{ H_{ij} \} = \{ H_{i, j}^{(N)} \}$, which we call	 an \emph{$\alpha$-L\'{e}vy matrix}.

\eed

The $N^{-1 / \alpha}$ scaling of the entries $H_{ij}$ is different from the usual $N^{-1 / 2}$ scaling for Wigner matrices. It makes the typical row sum of $\textbf{H}$ of order one.  The constant $\sigma$ is chosen so that our notation is consistent with previous works \cite{arous2008spectrum,bordenave2013localization,bordenave2017delocalization}, but can be altered by rescaling $\mathbf{H}$ without affecting our main results.

By \cite[Theorem 1.1]{arous2008spectrum}, the empirical spectral distribution of $\bH$ converges to a deterministic measure that we denote $\mu_\alpha$, which is absolutely continuous with respect to the Lebesgue measure and symmetric about $0$. We denote its probability density function and Stieltjes transform by $\varrho_\alpha(x)$ and 
\beq m_\alpha(z) = \int_{\mathbb R} \frac{\varrho_\alpha(x)\, dx}{ x -z}, \eeq
defined for $z\in \cplus$, respectively. 

The Siteltjes transform $m_\alpha(z)$ may be characterized as the solution to a certain self-consistent equation \cite[Section 3.1]{bordenave2013localization}. We note it here, although we will not need this representation for our work. For any $z \in \mathbb{H}$, define the functions $\varphi = \varphi_{\alpha,z}\colon \mathbb{K} \rightarrow \mathbb{C}$ and $\psi = \psi_{\alpha,z}\colon \mathbb{K} \rightarrow \mathbb{C}$ by
\beq
\label{psi}
\varphi_{\alpha,z}(x) = \frac{1}{\Gamma(\alpha/2)} \int_{\mathbb{R}_+} t^{\alpha/2 -1} e^{\iu t z} e^{-\Gamma(1-\alpha/2) t^{\alpha/2} x} dt, \quad \psi_{\alpha, z}(x) = \int_{\mathbb{R}_+} e^{\iu t z} e^{-\Gamma( 1 - \alpha/2) t^{\alpha/2} x} dt, 
\eeq
for any $x \in \mathbb{K}$. For each $z \in \mathbb{H}$, there exists a unique solution $y = y(z) \in \mathbb{K}$ to the equation $y(z) = \varphi_{\alpha,z} \big( y(z) \big)$. Then, the Stieltjes transform $m_{\alpha} (z): \mathbb{H} \rightarrow \mathbb{H}$ is defined by setting $m_{\alpha}(z) = \iu \psi_{\alpha,z} \big( y(z) \big)$.
 
We recall that, like any Stieltjes transform of an absolutely continuous measure, $\Im m_\alpha(z)$ extends  to the real line with 
\beq \lim_{\eta \rightarrow 0} \Im m_\alpha(E + \iu \eta ) =  \pi \varrho_\alpha(E)
\eeq
 for $E \in \mathbb R$. It is known that $\varrho_{\alpha} (x) \sim \frac{\alpha}{2 x^{\alpha + 1}}$ as $x$ tends to $\infty$ \cite[Theorem 1.6]{bordenave2011spectrum}.
\bed The classical eigenvalue locations $\gamma_i = \gamma^{(\alpha)}_i$ for $\varrho_\alpha(x)$ are defined by the quantiles
\begin{equation}
\label{gammaalphai}
\gamma_i = \inf \left\{ y \in \mathbb{R} :  \int_{-\infty}^y \varrho_\alpha(x)\, dx \ge \frac{i}{N} \right\}.
\end{equation}
\eed

Given a random matrix $\mathbf A$, it is common to study its resolvent $(\mathbf A - z)^{-1}$. Contrary to those for the Wigner model, the diagonal entries $G_{ii}(z)$ of the resolvent $\mathbf G(z) = (\mathbf{H} - z)^{-1}$ of a L\'evy matrix do not converge to a constant value but instead converge to a nontrivial limiting distribution as $N$ tends to infinity and $z\in \mathbb H$ remains fixed. This was shown in \cite{bordenave2011spectrum}, where the limit $\rstar(z)$ was identified as the resolvent of a random operator defined on a space known as the Poisson Weighted Infinite Tree \cite{aldous1992asymptotics, aldous2004objective}, which is a weighted and directed rooted tree, evaluated at its root. We note the basic construction here and refer to \cite[Section 2.3]{bordenave2011spectrum} for details.

Set $d\nu = (1/2)\, d\mu$, where $\mu$ is the Lebesgue measure on $\mathbb R$. The vertex set of the tree is given by ${V = \bigcup_{k \in \mathbb N } \mathbb N^k}$, where the root is $\mathbb N^0 = \varnothing$, and the children of $\mathbf v \in \mathbb N^k$ are denoted ${(\mathbf v ,1), (\mathbf v, 2), \ldots \in \mathbb N^{k+1}}$. To determine the weights, let $\{\Xi_{\mathbf v}\}_{\mathbf v \in V}$ be a collection of independent Poisson point processes with intensity measure $\nu$ on $\mathbb R$. Let $\Xi_{\varnothing} = \{ y_1, y_2, \dots \}$ be ordered so that $|y_1| \le |y_2| \le \cdots$, and set $y_i$ to be the weight of edge connecting $\varnothing$ to the vertex $(i)$. This process is repeated for all vertices so that, for any  $\textbf{v} \in \mathbb N^k$, the edge between vertices $\mathbf v$ and $(\mathbf v, i)$ is weighted with the value $y_{(\mathbf v , i)}$, where $\Xi_{\mathbf v} = \{ y_{(\mathbf v ,1) }, y_{(\mathbf v ,2)}, \dots \}$ is labeled so that $|y_{(\mathbf v ,1)}| \le |y_{(\mathbf v ,2)}| \le \cdots$. 

Let $\mathcal{F}$ be the (dense) subset of $L^2(V)$ of vectors with finite support and, for any $\textbf{v} \in V$, let $\delta_{\textbf{v}} \in \mathcal{F}$ denote the unit vector supported on $\textbf{v}$. Then, define the linear operator $\mathbf T\colon \mathcal{F} \rightarrow L^2(V)$ by setting
\beq
\langle \delta_{\mathbf v}, \mathbf T \delta_{\mathbf w} \rangle = \threepartdef
{\sign(y_{\mathbf w}) | y_{\mathbf w}|^{-1/\alpha}} {\mathbf w = (\mathbf v, k) \mbox{ for some } k,}
{\sign(y_{\mathbf v}) | y_{\mathbf v}|^{-1/\alpha}} {\mathbf v = (\mathbf w, k) \mbox{ for some } k,}
{0} {}
\eeq
We identify $\mathbf T$ with its closure, which is self-adjoint \cite[Section 2.3]{bordenave2011spectrum}. It can be considered a weak limit of the matrix $\mathbf H$, as $N$ tends to $\infty$. 

\bed 

\label{limitresolvent}

For any $z \in \mathbb{H}$, we define $\rstar(z)\colon \cplus \rightarrow \cplus$ to be the resolvent entry $\langle \delta_{\varnothing}, (\mathbf T - z )^{-1} \delta_{\varnothing} \rangle$.

\eed

	A key property of $\rstar (z)$, shown in \cite{bordenave2011spectrum}, is that it satisfies a ``recursive distributional equation,'' which may be considered as a limiting analogue of the usual Schur complement formula.
	
	\bel[{\cite[Theorem 4.1]{bordenave2011spectrum}}]
	
	\label{d:rde} 
	
	Denote by $\{\xi_k \}_{k \ge 1}$ a Poisson process on $\mathbb R_+$ with intensity measure $\big( \frac{\alpha}{2} \big) x^{-\alpha/2 -1}\, dx$. For any $z \in \mathbb{H}$, the random variable $\rstar(z) \colon \mathbb H \rightarrow \mathbb H$ satisfies the equality in law
	\beq\label{e:rde}
	\rstar (z) \overset{d}{=} - \Bigg( z + \displaystyle\sum_{k=1}^\infty \xi_k R_k(z) \Bigg)^{-1},
	\eeq
	where $\big( R_k (z) \big)_{k \ge 1}$ is an i.i.d.\ sequence with distribution $\rstar (z)$ independent from the process $\{\xi_k \}_{k \ge 1}$.
	\eel
\begin{rmk}
To see heuristically why \eqref{e:rde} holds, one applies the Schur complement formula to the matrix $\mathbf{H}$ to compute a diagonal resolvent element $G_{ii}(z)$, and then takes the large $N$ limit after ignoring the off-diagonal terms (which are negligible):
\begin{equation}
G_{ii} \approx - \frac{1}{z + \sum_{j \neq i }^N h_{ij}^2 G_{jj}^{(i)}} \approx - \frac{1}{z + \sum_{j = 1}^N h_{ij}^2 G_{jj}}.
\end{equation}
Here $G_{jj}^{(i)}$ is the resolvent of the $(N-1)\times (N - 1)$ matrix given by $\mathbf H$ with the $i$-th row and column removed. In the second statement we implement the (standard) approximation $G_{jj}^{(i)} \approx G_{jj}$.

 At this point we see a difference with the derivation of the semicircle law for Wigner matrices (as given in, for example, \cite{benaych2016lectures}): the sum involving the $h_{ij}^2$ no longer concentrates due to its heavy-tailed nature. Instead, the $\{ h_{ij}^2 \}$ converge in the large $N$ limit to the Poisson point process $\{ \xi_j \}$, which yields (2.9).
\end{rmk}

For any $z \in \mathbb{H}$, define the $N \times N$ matrix $\mathbf G (z) = \big\{ G_{ij} (z) \big\}$ by $\mathbf G (z)= ( \mathbf H - z)^{-1}$, which is the resolvent of $\mathbf H$. It is known from \cite[Section 2]{bordenave2011spectrum} that any diagonal entry $G_{jj} (z)$ converges to $\rstar (z)$ in distribution for fixed $z \in \mathbb{H}$, as $N$ tends to $\infty$.

Next, we require the following result and definition concerning the limit of $\rstar (z)$ as $\Im z$ tends to $0$. The following proposition will be proved in \Cref{s:scaling} below.

\bep\label{p:tightness} There exists a (deterministic) countable set $\mathcal A \subset (0,2)$ with no accumulation points in $(0,2)$ such that the following two statements hold. First, for all $\alpha \in (0,2)\setminus \mathcal A$, there exists a constant $c=c(\alpha)>0$ such that, for every real number $E \in [-c,c]$, the sequence of random variables $\big\{ \Im \rstar (E + \iu \eta) \big\}_{\eta >0}$ is tight as $\eta$ tends to $0$. Second, for any fixed $p \in {\mathbb N}$, all limit points ${\mathcal R}(E)$ of this sequence under the weak topology have the same moment $\E \big[ {\mathcal R}(E)^{p} \big]$.
\eep

\noindent The set $\mathcal A$ is non-explicit and originally appeared in \cite{bordenave2017delocalization} from an application of the implicit function theorem for Banach spaces to a certain self-consistent equation. In our context, $\mathcal{A}$ will come from a local law, given by \Cref{l:Xlocallaw} below.

\bed\label{d:arbitrary} Let $\vstar (E)$ be an arbitrary limit point (under the weak topology) as $\eta$ tends to $0$ of the sequence $\big\{ \Im \rstar (E + \mathrm{i} \eta) \big\}_{\eta > 0}$. By Proposition 2.4 and Prokhorov's theorem, there exists at least one. Given $\vstar (E)$, define the random variable $\mathcal{U}_{\star} (E) = \big( \pi \varrho_{\alpha} (E) \big)^{-1} \vstar (E)$. 

\eed

\noindent We also need the following definition to state our results.

\bed\label{d:momentconvergence}
Let $\mathbf w = (w_i)_{1 \le i \le n} \in \mathbb R^n$ be a random vector and $\mathbf w^{(j)} = (w^{(j)}_i)_{1 \le i \le n}$, defined for $j \ge 1$, be a sequence of random vectors in $\mathbb R^n$. We say that $\mathbf w^{(j)}$ \emph{converges in moments} to $\mathbf w$ if for every polynomial $P\colon \mathbb R^n \rightarrow \mathbb R$ in $n$ variables, we have
\beq
\lim_{N\rightarrow \infty} \E \Big[  P \big( \mathbf w^{(N)} \big) \Big] =  \E \big[ P ( \mathbf w ) \big].
\eeq
\eed

\subsection{Results}\label{s:ourresults}

In this section, we state our results, which are proved in \Cref{s:proofs}. Our first, \Cref{t:main2}, identifies the joint moments of different entries of the same eigenvector. Our second, \Cref{t:thedistribution1}, does this for the same entries of different eigenvectors. We let $\lambda_1 (\textbf{H}) \le \lambda_2 (\textbf{H}) \le \cdots \le \lambda_N (\textbf{H})$ denote the eigenvalues of $\textbf{H}$ in non-decreasing order and, for each $k \in [1, N]$, we write $\mathbf{u}_k = \big( u_k (1), u_k (2), \ldots , u_k (N) \big)$ for a unit eigenvector of $\mathbf H$ corresponding to $\lambda_k (\textbf{H})$. 

In the theorem statements, certain index parameters (for instance $i$ and $k$) may depend on $N$. For brevity, we sometimes suppress this dependence in the notation, writing for example $\u_k$ instead of $\u_{k(N)}$. Throughout, we recall the countable set $\mathcal{A} \subset (0, 2)$ from \Cref{p:tightness}. 

\bet

\label{t:main2}

 For all $\alpha \in (0,2)\setminus \mathcal A$, there exists a constant $c=c(\alpha)>0$ such that the following holds. Fix an integer $n > 0$ (independently of $N$) and index sequences $ \left\{ i_j ( N ) \right\}_{1\le j \le n}$ such that for every $N$, $ \left\{ i_j ( N ) \right\}_{1\le j \le n}$ are distinct integers in $[1, N]$. Further let $k=k(N) \in [1, N]$ be an index sequence such that $\lim_{N\rightarrow\infty} \gamma_k = E$ for some $E \in [-c,c]$. Then the vector 
\beq
\left(N \u_{k}(i_1)^2, N \u_{k}(i_2)^2, \dots, N \u_{k}(i_n)^2\right)
\eeq
converges in moments to 
\beq
\big( {\mathcal N}^2_1 \cdot \mathcal{U}_1 (E), {\mathcal N}^2_2 \cdot \mathcal{U}_2 (E), \dots, {\mathcal N}^2_n \cdot \mathcal{U}_n (E) \big),
\eeq
where the ${\mathcal N}_j$ are independent, identically distributed (i.i.d.) standard Gaussians and the $\mathcal{U}_j (E)$ are i.i.d.\ random variables with law $\mathcal{U}_{\star} (E)$ that are independent from the $\mathcal{N}_j$. 
\eet

\bet

\label{t:thedistribution1} 

For all $\alpha \in (0,2)\setminus \mathcal A$, there exists a constant $c=c(\alpha)>0$ such that the following holds. Fix an integer $n >0$ (independently of $N$) and index sequences $\{k_j(N)\}_{1 \le j \le n}$  such that for every $N$, $\{k_j(N)\}_{1\le j \le n}$ are distinct integers in $[1, N]$ and $| k_1 - k_j| < N^{1/2}$ for each $j \in [2, n]$. Suppose that $\lim_{N\rightarrow\infty} \gamma_{k_1} = E$ for some $E\in [-c,c]$. Further let $i = i(N) \in [1, N]$ be an index sequence. Then the  vector
\beq \label{e:davector} \left(N \u_{k_1}(i )^2, N \u_{k_2}(i )^2, \dots , N \u_{k_n}(i )^2\right) \eeq
converges in moments to
\beq
\big({\mathcal N}^2_1 \cdot \mathcal{U}_{\star} (E), {\mathcal N}^2_2\cdot \mathcal{U}_{\star} (E), \dots,{\mathcal N}^2_n \cdot \mathcal{U}_{\star} (E)  \big),
\eeq
where the $\mathcal N_j$ are i.i.d.\ standard Gaussians that are independent from $\mathcal{U}_{\star} (E)$.
\eet

\Cref{t:main2} shows that entries of the same eigenvector are asymptotically independent, as in the Wigner case \cite[Corollary 1.3]{bourgade2013eigenvector}. However, unlike in the Wigner case \cite[Theorem 1.2]{bourgade2013eigenvector}, \Cref{t:thedistribution1} indicates that entries of different eigenvectors with the same index can be asymptotically  correlated. This can be seen by taking $n = 2$ and $k_2 = k_1 + $1 in that result, in which case $N \u_{k_1} (m)^2$ and $N \u_{k_2} (m)^2$ are correlated through $\mathcal{U}_{\star} (E)$.

For almost all $E \in [-c, c]$, the random variable $\mathcal{U}_{\star} (E)$ is not explicit. However, as a consequence of \cite[Theorem 4.3]{bordenave2011spectrum}, an exception occurs at $E = 0$, where $\mathcal{U}_{\star} (0)$ is given by the inverse of a stable law. In this case, the $n = 1$ cases of \Cref{t:main2} and \Cref{t:thedistribution1} reduce to the following corollary.

\bec\label{c:median} Retain the notation of \Cref{t:main2}. Choose $k$ so that $E=0$, and set $n=1$ and $m=i_1$. Then $N \u_{k}(m)^2$ converges in moments to 
\beq
\frac{ 1 }{ \Gamma\left( 1 + \frac{2}{\alpha} \right) }\cdot {\mathcal N}^2 \cdot  \vartheta,
\eeq
where  $\mathcal N$ is a standard Gaussian and $\vartheta$ is independent with law $S^{-1}$, where $S$ is a $(1, 1)$ $\frac{\alpha}{2}$-stable law.
\eec

The non-triviality of the random variable $\vartheta$ shows that the entries of $\textbf{u}_k$ are asymptotically non-Gaussian; this is again different from the eigenvector behavior in the Wigner case. It is natural to wonder whether $\vstar(E)$ is non-constant for $E\neq 0$. As a consequence of the last statement of \Cref{l:boundarylimit} below, for all $p \in \mathbb N$, the moments $\E \big[ \big( \mathcal{R}_{\star} (E)\big)^p \big]$ are continuous in $E$, for $|E|$ sufficiently small. This implies that moments of $\mathcal{U}_{\star} (E)$ are non-constant for all $E$ in a neighborhood of $0$, so the eigenvectors of $\textbf{H}$ corresponding to sufficiently small eigenvalues are also non-Gaussian. 

It is also natural to ask whether our results hold for convergence in distribution. In the case $\alpha \in (1, 2) \setminus \mathcal{A}$ we will address this in \Cref{s:cvgdist} through \Cref{p:dconvergence} by studying the rate of growth of the moments of the limiting distribution. If $\alpha < 1$, then the moments of $\mathcal{U}_{\star} (E)$ grow too quickly for this to determine the law of $\mathcal N^2 \cdot \mathcal{U}_{\star} (E)$.

Finally, we note that we consider the squared eigenvector entries $\mathbf u_k(i)^2$ to avoid ambiguity in the choice of sign for $\mathbf u_k(i)$, since given an eigenvalue $\lambda_k$ of a real symmetric matrix and a corresponding eigenvector $\mathbf v_k$, the vector $-\mathbf v_k$ is also an eigenvector. In the context of L\'evy random matrices, if one chooses this sign independently with probability $1/2$ for each possibility, then our methods show the above results hold with the conclusion of \Cref{t:main2} replaced by the convergence in moments of $\big(\sqrt{N} \u_{k}(i_1), \sqrt{N} \u_{k}(i_2), \dots, \sqrt{N} \u_{k}(i_n) \big)$ to
$\big( {\mathcal N}_1 \cdot \mathcal{U}^{1/2}_1 (E), {\mathcal N}_2 \cdot \mathcal{U}^{1/2}_2 (E), \dots, {\mathcal N}_n \cdot \mathcal{U}^{1/2}_n (E) \big)$, where the $\mathcal N_k$ remain i.i.d.\ standard Gaussians, and similarly for \Cref{t:thedistribution1}.

\section{Proofs of main results}\label{s:proofs}

Assuming some claims proven in later parts of this paper, we will establish in this section the results stated in \Cref{s:ourresults}. This will proceed through the following steps.
\begin{enumerate}
\item We define a matrix $\X$, obtained by setting the small entries of the original L\'{e}vy matrix $\textbf{H}$ to zero, and the Gaussian perturbation $\X_s = \X + \sqrt{s} \mathbf W$, where $\mathbf W$ is a GOE matrix. For a specific choice of $s = t$, with $N^{-1/2} \ll t \ll 1$, we show as \Cref{l:maincomparison} that the eigenvector statistics of $\mathbf H$ (corresponding to small eigenvalues) are approximated by those of $\X_t$. 
\item We show as \Cref{t:dynamics} that moments of the eigenvector entries of $\X_t$ (corresponding to small eigenvalues) can be identified through resolvent entries of $\textbf{X}_t$.
\item We compute as \Cref{t:main1} the limits of these resolvent entries as $N$ and $\eta$ tend to $\infty$ and $0$, respectively.  
\end{enumerate}

In \Cref{s:proofsmain}, we prove \Cref{t:main2}, \Cref{t:thedistribution1}, and \Cref{c:median}, given that the results enumerated above and \Cref{p:tightness} hold. The remaining sections of the paper verify these prerequisite results.

\subsection{Notation}

\label{Notation}

Throughout, we write $C$ for a large constant and $c$ for a small constant. These may depend on other constants and may change line to line, but only finitely many times, so that they remain finite. We say $X \ll Y$ if there exists a small constant $c > 0 $ such that $N^c |X| \le Y$. Constants in this paper may depend on the constant $c>0$ implicit in the claim $X \ll Y$, but we suppress this in the notation. We write $X \dom Y$ if there exists $C> 0$ such that $|X| \le C Y$; we also say $X \lesssim_u Y$, or equivalently $X = O_u (Y)$, if $|X| \le C_u |Y|$ for some constant $C_u >0$ depending on a parameter $u$. 

In what follows, for any function (or vector) $f$, we let $\|  f \|_\infty$ denote the $L^\infty$-norm of $f$.  We also denote $\text{Mat}_{N \times N}$ by the set of $N \times N$ real, symmetric matrices. Given $\mathbf M \in \text{Mat}_{N \times N}$, we denote its eigenvalues by $\lambda_1 (\textbf{M}), \lambda_2 (\textbf{M}), \ldots , \lambda_N (\textbf{M})$ in non-decreasing order. We further let $\u_i(\mathbf M)$ denote the unit eigenvector corresponding to the eigenvalue $\lambda_i(\mathbf M)$ for each $i$. We also make the following definition.

\bed We say a (sequence of) vectors $\q = \q (N) = (q_1, q_2, \dots q_N) \in \mathbb R^N$ has \emph{stable support} if there exists a constant $C > 0$ such that the set $ \{ (i, q_i)  : q_i \neq 0 \}$ does not change for $N > C$. We let $\supp \q = \{ i : q_i \neq 0 \}$ denote the support of $\q$. 
\eed

We next introduce the notion of overwhelming probability.

\bed
\label{d:op}
 We say that a family of events $\{\mathcal F(u)\}$ indexed by some parameter(s) $u \in U^{(N)}$, where  $U^{(N)}$ is a parameter set which may depend on $N$, holds with \emph{overwhelming probability} if, for any $D>0$, there exists $N\big(D, U^{(N)}\big) > 0$ such that for $ N \ge N\big(D, U^{(N)}\big)$,
\begin{equation}
 \inf_{u \in U^{(N)}} \P \big( \mathcal F(u) \big) \ge 1 - N^{-D}.
\end{equation}
\eed

Next, given $\alpha \in (0,2)$ we may select positive real numbers $b = b(\alpha) > 0$; $\nu = \nu(\alpha) > 0$; $\mathfrak{a} = \mathfrak{a} (\alpha) > 0$; and $\rho = \rho(\alpha) > 0$ such that 
\beq
\label{e:parameters}
\nu = \frac{1}{\alpha} - b > 0; \quad \frac{1}{4-\alpha} < \nu < \frac{1}{4-2\alpha}; \quad (2 - \alpha) \nu  <  \mathfrak a <  \frac{1}{2}; \quad 0 < \rho < \nu < \frac{1}{2}; \quad \alpha \rho < (2 - \alpha) \nu.
\eeq

\noindent These parameters will be fixed throughout the paper, and we will let other constants depend on them (and on $\alpha$), even when not explicitly noted. We always assume $\alpha \in (0,2 ) \setminus \mathcal A$, where $\mathcal A$ is the set from \Cref{l:Xlocallaw} below (or, equivalently, the one from \Cref{p:tightness}).

\subsection{Comparison}\label{s:comparisonstatements}
We first recall the definition of the removed model $\X$ from \cite[Definition 3.2]{aggarwal2018goe}. 

\bed\label{abremovedmatrix}
	 Recalling the notation of \Cref{momentassumption}, let $X = (Z + J) \one_{|Z + J| > N^b}$. We call $X$ the \emph{$b$-removal of $Z + J$}. Further, let $\{ X_{ij} \}_{1 \le i \le j \le N}$ be mutually independent random variables that each have the same law as $N^{-1 / \alpha} X$. Set $X_{ij} = X_{ji}$ for each $1 \le j < i \le N$, and define the $N \times N$ symmetric matrix $\mathbf{X} = \{ X_{ij} \}$. We call $\mathbf{X}$ a \emph{$b$-removed $\alpha$-L\'{e}vy matrix}. 	
	 \eed

We also recall a resampling and coupling of $\X$ and $\bH$ that was described in \cite[Section 3.3.1]{aggarwal2018goe}. 

\bed

\label{abcpsidefinitions}
	
	\label{chipsi}
	We define mutually independent random variables $\{ a_{ij}, b_{ij}, c_{ij}, \psi_{ij}, \chi_{ij} \}_{1\le i \le j \le N}$ as follows. Let $\psi_{ij}$ and $\chi_{ij}$ denote $0$--$1$ Bernoulli random variables with distributions
	\begin{flalign}\label{e:phidef}
	\mathbb{P} \big[ \psi_{ij} = 1 \big] = \mathbb{P} \big[ |H_{ij}| \ge N^{-\rho} \big], \quad\mathbb{P} \big[ \chi_{ij} = 1 \big] = \displaystyle\frac{\mathbb{P} \big[ |H_{ij}| \in [N^{-\nu}, N^{-\rho}) \big]}{\mathbb{P} \big[ |H_{ij}| < N^{-\rho} \big]}.
	\end{flalign}

	\noindent Additionally, let $a_{ij}$, $b_{ij}$, and $c_{ij}$ be random variables such that 
	\begin{flalign}
	\P(a_{ij} \in I ) = & \frac{\P \Big[ H_{ij} \in  (-N^{-\nu}, N^{-\nu})  \cap I \Big]}{\P \big[ |H_{ij}| < N^{-\nu}) \big]}, \quad \P [c_{ij} \in I] = \frac{\P \Big[ H_{ij} \in \big( (-\infty, -N^{-\rho}] \cup [N^{-\rho}, \infty) \big) \cap I \Big] }{\P \big[ |H_{ij}| \ge N^{-\rho} \big]}, \\
	& \quad\quad \P(b_{ij} \in I ) = \frac{\P \Big[ H_{ij} \in \big( (- N^{-\rho}, - N^{-\nu}]  \cup [N^{-\nu}, N^{-\rho}) \big)  \cap I \Big]}{\P \big[ |H_{ij}| \in  [N^{-\nu}, N^{-\rho}) \big]},
	\end{flalign} 
	
	\noindent for any interval $I\subset \mathbb R$. For each $1 \le j < i \le N$, define $a_{ij} = a_{ji}$ by symmetry, and similarly for each of $b_{ij}$, $c_{ij}$, $\psi_{ij}$, and $\chi_{ij}$.
	
\eed

Because $a_{ij}$, $b_{ij}$, $c_{ij}$, $\psi_{ij}$, and $\chi_{ij}$ are mutually independent, $H_{ij}$ has the same law as \beq(1 - \psi_{ij}) (1 - \chi_{ij}) a_{ij} + (1 - \psi_{ij}) \chi_{ij} b_{ij} + \psi_{ij} c_{ij}\eeq and $X_{ij}$ has the same law as $(1 - \psi_{ij}) \chi_{ij} b_{ij} + \psi_{ij} c_{ij}$. Therefore, although the random variables $H_{ij} \mathbf{1}_{|H_{ij}| \ge N^{-\rho}}$, $H_{ij} \mathbf{1}_{N^{-\nu} \le |H_{ij}| < N^{-\rho}}$, and $H_{ij} \mathbf{1}_{|H_{ij}| < N^{-\nu}}$ are correlated, this decomposition expresses their dependence through the Bernoulli random variables $\psi_{ij}$ and $\chi_{ij}$. 

\bed
	
	\label{hsumabc}
	
	For each $1 \le i, j \le N$, set
	\begin{flalign}
	\label{abc}
	A_{ij} = (1 - \psi_{ij}) (1 - \chi_{ij}) a_{ij}, \quad B_{ij} = (1 - \psi_{ij}) \chi_{ij} b_{ij}, \quad C_{ij} = \psi_{ij} c_{ij},
	\end{flalign}
	
	\noindent and define the four $N \times N$ matrices $\mathbf{A} = \{ A_{ij} \}$, $\mathbf{B} = \{ B_{ij} \}$, $\mathbf{C} = \{ C_{ij} \}$, and $\Psi = \{ \psi_{ij} \}$.
\eed	

	For the remainder of the paper we sample $\mathbf{H}$ and $\mathbf{X}$ by setting $\mathbf{H} = \mathbf{A} + \mathbf{B} + \mathbf{C}$ and $\mathbf{X} = \mathbf{B} + \mathbf{C}$, inducing a coupling between the two matrices. We commonly refer to $\Psi$ as the \emph{label} of $\mathbf{H}$ (or of $\mathbf{X}$). Defining $\mathbf{H}$ and $\mathbf{X}$ in this way ensures that their entries have the same laws as in \Cref{momentassumption} and \Cref{abremovedmatrix}, respectively. 
	
	For any $s \in \mathbb R_+$, we define the matrix $\X_s \in \text{Mat}_{N \times N}$ by setting 
 \begin{equation}
\label{e:Xt}
\X_s  = \X +  \W_s, 
\end{equation}
where $\W_s= \big( w_{ij} (s) \big)_{1 \le i , j \le N} \in \text{Mat}_{N \times N}$ and $w_{ij}$ are mutually independent Brownian motions with symmetry constraint $w_{ij} = w_{ji}$ and variance $(1+\one_{i = j}) N^{-1}$. 

We now make a specific choice of the time $t$ to enable our comparison argument. Define $t$ by
\begin{flalign}
\label{t}
t = N \mathbb{E} \Big[ H_{11}^2  \one_{ |H_{11}| < N^{-\nu}} \big| |H_{11}| < N^{-\rho} \Big] = \displaystyle\frac{N \mathbb{E} \big[ H_{11}^2  \one_{ |H_{11}| < N^{-\nu}} \big]}{\mathbb{P} \big[ |H_{11}| < N^{-\rho} \big]}. 
\end{flalign}
The following estimate is \cite[Lemma 3.5]{aggarwal2018goe} and can be quickly deduced from \eqref{probabilityxij} and \eqref{t}. 
\bel[{\cite[Lemma 3.5]{aggarwal2018goe}}]

\label{t0estimate} 

Under the choice of \eqref{t}, we have that
\begin{flalign}
\label{c1c2t0}
 c N^{(\alpha -2) \nu} \le t \le C N^{(\alpha -2) \nu}.
\end{flalign}
\eel

Observe in particular that \eqref{c1c2t0} implies that $N^{-1 / 2} \ll t \ll 1$, by the third inequality in \eqref{e:parameters}. The next theorem is proved in \Cref{s:comparison} and completes the first step of the outline given in the beginning of \Cref{s:proofs}.

\bet

\label{l:maincomparison} 

There exist constants $c_1, c_2 > 0$ such that the following holds. Let $t$ be as in \eqref{t}, $P\colon \mathbb R^n \rightarrow \mathbb R^n$ be a polynomial in $n$ variables, and $\q \in \mathbb{R}^N$ be a unit vector with stable support. Then there exists a constant $C= C \big( P, |\supp \q|\big)>0$ such that, for indices $i_1, i_2, \dots, i_n \in \big[ (1/2- c_1) N, (1/2 + c_1) N \big]$, 
\beq
\Bigg| \E \Bigg[  P \bigg( \Big( N \big\langle \q, \u_{i_k}(\X_t) \big\rangle^2 \Big)_{1\le k \le n} \bigg)  \Bigg]  - \E \Bigg[  P \bigg( \Big( N \big\langle \q, \u_{i_k}( \mathbf H ) \big\rangle^2 \Big)_{1\le k \le n} \bigg)  \Bigg] \Bigg| \le C N^{-c_2}.
\eeq
\eet

\begin{rmk}

 One might wonder why we implement the three-tiered composition $\mathbf H = \mathbf A + \mathbf B + \mathbf C$ in \Cref{abcpsidefinitions} instead of a two-tiered one. The reason is that the parameters $\nu$ and $\rho$ play different roles. In particular, $\nu$ dictates the size of the $\mathbf{A}$-entries, which we would like to be small so as to bound moments in our comparison argument (see \eqref{530} below); thus, we should take $\nu$ sufficiently large. However, $\rho$ dictates the threshold for the $\mathbf{C}$-entries in our matrix, which are very large in the sense that their moments are unbounded; we would thus like $\rho$ sufficiently small to ensure that there are not too many of them (see \eqref{e:hoeffding} below). Using a two-tier composition would force $\nu = \rho$, which would not be feasible for all $\alpha \in (0, 2)$, which is why we implement a three-tiered one (see for example \eqref{e:largefield}). (However, the choice $\nu = \rho$ works when $\alpha \in (1, 2)$.)
\end{rmk}

\subsection{Short-time universality}

\label{s:dynamicsstatements}

For each integer $k \in [1, N]$ and real number $s \ge 0$, abbreviate $\lambda_k(s) = \lambda_k (\textbf{X}_s)$, and set $\bm \lambda(s) = \big( \lambda_1 (s), \lambda_2 (s), \dots, \lambda_N (s) \big)$. Further let $\textbf{u}_k (s) \in \mathbb{R}^N$ denote the unit eigenvector of $\textbf{X}_s$ associated with $\lambda_k (s)$, and set $\u (s) = \big( \u_1 (s), \dots , \u_N(s)\big)$.

Next, for any unit vector $\q \in \mathbb R^N$ and $k \in [1, n]$, set $z_k (s) = z_k (s, \textbf{q}) = \sqrt{N} \big\langle \q , \textbf{u}_k(s) \big\rangle$. For any integer $m \ge 1$; indices $i_1, i_2, \ldots , i_m \in [1, N]$; and integers $j_1, j_2, \ldots , j_m \ge 0$, define  
\beq\label{e:Qdef}
Q^{j_1,\dots, j_m}_{i_1, \dots , i_m} (s) = \prod_{l=1}^m z_{i_l} (s)^{2j_l} \prod_{l=1}^m a(2j_l)^{-1}, \quad \text{where}\quad a(2j) = (2j - 1)!!.
\eeq
The normalization factors $a(2j)$ are chosen because they are the moments of a standard Gaussian.

To any index set $\{ (i_1, j_1), \dots , (i_m, j_m)\}$ with distinct $i_k \in [1, N]$ and positive $j_k$, we may associate the vector $\bm \xi = (\xi_1, \xi_2, \dots, \xi_N) \in \mathbb N^N$ with $\xi_{i_k} = j_k$ for $1 \le k \le m$ and $\xi_p =0$ for $p \notin \{ i_1, \dots, i_m\}$. We think of $\bm \xi$ as a particle configuration on the integers, with $j_k$ particles at site $i_k$ for all $k$ and zero particles on the sites not in $\{ i_1, \dots, i_m\}$. We call the set $\{ i_1, \dots, i_m\}$ the support of $\bm \xi$, denoted $\supp \bm \xi$. Denote $\mathcal N (\bm \xi) = \sum_{j=1}^m j_k$, the total number of particles. The configuration $\bm \xi^{ij}$ is defined as the result of moving one particle in $\bm \xi$ from $i$ to $j$, that is, if $i \ne j$ then $\xi^{ij}_k$ equals $\xi_k + 1$, $\xi_k - 1$, or $\xi_k$ for $k = j$, $k = i$, and $k \notin \{ i, j \}$, respectively. Under this notation, we define an observable $F_s (\bm \xi)$ by the expectation 
\beq\label{e:observable}
F_s (\bm \xi) = \E \big[ Q^{j_1,\dots, j_m}_{i_1, \dots , i_m} (s)  \big].
\eeq

 Now fix $\fc \in \mathbb{R}_{> 0}$, later chosen to be sufficiently small. Recalling $\mathfrak{a}$ from \eqref{e:parameters} and $t$ from \eqref{t}, define
\begin{equation}
\label{e:etadefine}
\psi  = N^\fc, \quad \eta = N^{-\mathfrak a}\psi, \quad \text{so that} \quad N^{-1/2} \ll \eta \ll t,
\end{equation}

\noindent where the last inequality in \eqref{e:etadefine} follows from the third bound in \eqref{e:parameters}, \eqref{c1c2t0}, and the fact that $\fc$ is small.  For each $s \in \mathbb{R}_{> 0}$ we define the resolvent $\textbf{R} (s, z)$, Stieltjes transform $m_N (s, z)$ of $\textbf{X}_s$, and the expectation of $m_N (s,z)$ by
\beq 
\label{rsz}
 \R(s,z) = ({\X_s - z })^{-1}, \qquad m_N (s, z) = N^{-1} \tr \textbf{R} (s, z), \qquad \widehat m_N (s, z) = \E \big[ m_N (s,z) \big].
\eeq
We also define the (random) empirical spectral measure for $\mathbf X_s$ by
\beq\label{xsempiricalmeasure}
\mu_s = \frac{1}{N} \sum_{i=1}^N \delta_{\lambda_i(s)},
\eeq
where $\delta_x$ is the discrete probability measure that places all its mass at $x$. Further, we define $\widehat \mu_s =  \E[ \mu_s ]$ and observe that the Stieltjes transform of $\widehat \mu_s$ is $\widehat m_N (s,z)$.  The classical
eigenvalue locations for $\widehat \mu_s$ are given by
\begin{equation}
\label{hatgammaalphai}
\widehat \gamma_i(s) = \inf \left\{ y \in \mathbb{R} :  \widehat \mu_s\big( (-\infty, y] \big) \ge \frac{i}{N} \right\}.
\end{equation}

\noindent Recalling the specific choice of $t$ from \eqref{t}, we abbreviate $\widehat \gamma_i = \widehat \gamma_i(t)$.

The following theorem is proved in \Cref{s:dynamics} and completes the second step of the above outline.  We recall from \eqref{gammaalphai} the notation $\gamma_k = \gamma^{(\alpha)}_k$.
\bet

\label{t:dynamics} 

Fix $m \in \mathbb N$, let $\q \in \mathbb R^N$ be a unit vector with stable support, and let $t$ be the time defined in \eqref{t}. There exist constants $c_1 > 0$, $c_2 = c_2 (m) >0$, and $C=C (m, |\supp \q|) >0$ such that, if $\mathfrak{c} < c_2$, then
\begin{equation}
\label{ftexpectation}
\max_{\substack{\bm \xi \colon \mathcal N (\bm \xi) = m \\ \supp \bm \xi \in  [ (1/2 - c_1) N  , (1/2 + c_1) N  ]}} \left| F_t (\bm \xi) - \E \left[\prod_{k=1}^N \left( \frac{\Im \big\langle \q , \R ( t , \widehat \gamma_{k}  + \iu \eta ) \q \big\rangle }{\Im m_\alpha ( \gamma_k   + \iu \eta )  }  \right)^{\xi_k} \right] \right| \le C N^{-c_2},
\end{equation}
where $\bm \xi = (\xi_1, \xi_2, \ldots , \xi_N)$.
\eet

\begin{rmk}
It is natural to ask whether the results in this section, such as \Cref{t:dynamics}, hold for a broader class of vectors $\q$ than those with stable support. Similar results were proved for finite variance sparse matrices in \cite{bourgade2017eigenvector}. Addressing more general $\q$ would require isotropic versions of several of the preliminary results used in the proof of \Cref{t:dynamics} (such as \eqref{e:deloc}), which are not currently available in the literature. We therefore do not pursue this question here.
\end{rmk}

\subsection{Scaling limit}\label{s:scalingstatements}

The next theorem will be proven in \Cref{s:scaling} and establishes the scaling limit of the quantity compared to $F_t$ on the left side of \eqref{ftexpectation}, completing step 3 of the above outline. Here, we recall $\mathcal{U}_{\star} (E)$ from \Cref{d:arbitrary}. 

\bet\label{t:main1} 

There exist constants $c_1, c_2 > 0$ such that the following holds. Fix an integer $n >0$ (independently of $N$) and index sequences $\{k_j(N)\}_{1 \le j \le n}$  such that for every $N$, $\{k_j(N)\}_{1\le j \le n}$ are distinct integers in $[1, N]$ and $| k_1 - k_j| < N^{1/2}$ for each $j \in [2, n]$. Let $\q = ( q_1, \dots, q_N) \in \mathbb{R}^N$ be a unit vector with stable support; let $t$ be as in \eqref{t}; and assume that $\lim_{N \rightarrow \infty} \gamma_{k_1}  = E$, for some $E \in [-c_1, c_1]$, and $\mathfrak{c} < c_2$. Then the vector 
\beq
\left( \frac{\Im \big\langle \q , \R ( t , \widehat \gamma_{k_j}  + \iu \eta ) \q \big\rangle }{\Im m_\alpha ( \gamma_{k_j}   + \iu \eta ) }\right)_{1 \le j \le n}
\eeq
converges in moments to 
\beq\label{e:claim1}
(1, 1, \ldots , 1) \cdot \displaystyle\sum_{i \in \supp \textbf{\emph{q}}} q_i ^2 \mathcal{U}_i (E),
\eeq
where the random variables $\mathcal{U}_{i}(E )$ are independent and identically distributed with law $\mathcal{U}_{\star}(E )$.
\eet

\subsection{Proofs}\label{s:proofsmain}

In this section we establish \Cref{t:thedistribution1}, \Cref{t:main2}, and \Cref{c:median}.

\begin{proof}[Proof of \Cref{t:thedistribution1}.]
	
	By symmetry, we may suppose $i=1$ in the theorem statement. Recalling $t$ from \eqref{t} and applying \Cref{l:maincomparison} with $\mathbf q = \e_1= (1, 0, 0, \dots, 0)$ gives
	\beq
	\label{hestimateu}
	\lim_{N\rightarrow \infty} \Bigg| \E \Bigg[  P \bigg( \Big(N  \big\langle \u_{k_j}(\textbf{H}) , \e_1 \big\rangle^2  \Big)_{1\le j \le n}  \bigg) \Bigg] - \E \Bigg[  P \bigg( \Big(N  \big\langle \u_{k_j}(\textbf{X}_t) , \e_1 \big\rangle^2  \Big)_{1\le j \le n}  \bigg) \Bigg] \Bigg| = 0,
	\eeq
	
	\noindent Next, \Cref{t:dynamics} yields
	\beq
	\label{xtestimateu}
	\lim_{N\rightarrow \infty} \left| \E \Bigg[  P \bigg( \Big(N  \big\langle \u_{k_j}(\X_t) , \e_1 \big\rangle^2  \Big)_{1\le j \le n}  \bigg) \Bigg] - \E \left[ P \left( \left( \mathcal N^2_j \cdot  \frac{\Im R_{11} ( t, \widehat \gamma_{k_j}  + \iu \eta ) }{ \Im m_\alpha ( \gamma_{k_j}  + \iu \eta)} \right)_{1 \le j \le n} \right) \right] \right| = 0,
	\eeq
	
	\noindent where the $\mathcal N_j$ are i.i.d.\ standard Gaussians that are independent from $\Im R_{11} ( t, \widehat{\gamma}_{k_j}   + \iu \eta )$. Here we used \eqref{e:Qdef} and the fact that $a(2j) = \E \left[ \mathcal{N}^{2j} \right]$ for a standard Gaussian $\mathcal N$. 
	
	Now the theorem follows from \eqref{hestimateu}, \eqref{xtestimateu}, and \Cref{t:main1}.\end{proof}

\begin{proof}[Proof of \Cref{t:main2}.]


By symmetry, we may suppose that $i_j = j$ for each $j\in [ 1, n ]$. Let \beq
\mathbf v = \big( \mathcal{U}_1(E), \mathcal{U}_2(E), \dots, \mathcal{U}_n(E) \big)
\eeq
be a vector of i.i.d.\ random variables with distribution $\mathcal{U}_{\star} (E)$, where $\mathcal{U}_{\star} (E)$ is as in \Cref{d:arbitrary}. 

For any vector $\q$ with stable support such that $q_i = 0$ for $i\notin [1, n ]$, let $\mathbf{w} \in \mathbb{R}^N$ denote the vector 
$ \mathbf{w} = ( q_1^2, q_2^2, \dots , q_N^2).$
 Fix $m\in \mathbb N$, recall $a (2m) = (2m - 1)!!$ from \eqref{e:Qdef}, abbreviate $\u_k = \u_k(\mathbf H)$, and consider the polynomial
\beq
Q(q_1,\dots, q_n ) = \E \Big[  \big( N  \langle \q, \u_k \rangle^2 \big)^m  \Big]  - a(2m) \E \big[ \langle \mathbf{w}, \mathbf v \rangle ^{m}  \big].
\eeq

\noindent Then together \Cref{l:maincomparison}, \Cref{t:dynamics}, and (the $n = 1$ case of) \Cref{t:main1} imply for any unit vector $\textbf{q} \in \mathbb{R}^N$ with $\supp \textbf{q} \subseteq \{ 1, 2, \ldots,  n \}$ that 
\beq\label{q0estimate2}
\lim_{N\rightarrow \infty} Q(q_1,\dots, q_n ) = 0.
\eeq

\noindent Here we recalled \eqref{e:Qdef} and the fact that $a(2j) = \E \left[ \mathcal{N}^{2j} \right]$ for a standard Gaussian $\mathcal N$. 
Now observe that $Q$ is a polynomial of degree $2m$ in the $q_i$, that is, there exists coefficients $B_{\textbf{d}} \in \mathbb{R}$ such that
\begin{flalign}
\label{q0estimate} 
Q (q_1, q_2, \ldots , q_n) = \displaystyle\sum_{|\textbf{d}| = 2m} B_{\textbf{d}} \displaystyle\prod_{j = 1}^n q_j^{d_j},
\end{flalign}

\noindent where $\textbf{d} = (d_1, d_2, \ldots , d_n) \in \mathbb{Z}_{\ge 0}^n$ is summed over all $n$-tuples of nonnegative integers with $|\textbf{d}| = \sum_{j = 1}^n d_j = 2m$. Thus, since \eqref{q0estimate2} holds for all $(q_1, q_2, \ldots , q_n)$ with $\sum_{j = 1}^n q_j^2 = 1$, we have 
\begin{flalign}
 \displaystyle\lim_{N \rightarrow \infty} \displaystyle\max_{|\textbf{d}| = 2m} \left| B_{\textbf{d}} \right| = 0,
\end{flalign}

\noindent where again $\textbf{d}$ ranges over all $n$-tuples of nonnegative integers summing to $2m$. In particular, fixing some $n$-tuple $(m_1, m_2, \ldots , m_n)$ of nonnegative integers summing to $m$ and taking $\textbf{d} = (2m_1, 2m_2, \ldots , 2m_n)$ gives  	
\beq
\displaystyle\frac{(2m)!}{\prod_{j = 1}^n (2m_i)!} \lim_{N\rightarrow\infty} \E\left[  \prod_{j = 1}^n \big( N \u_k(j)^2 \big)^{m_{j}} \right] = \displaystyle\frac{m! (2m - 1)!!}{\prod_{j = 1}^n m_i!} \E\left[ \prod_{j = 1}^n \mathbf v(j)^{m_j} \right], 
\eeq

\noindent which implies that 
\beq\label{e:allm}
 \lim_{N\rightarrow\infty} \E\left[  \prod_{j = 1}^n \big( N \u_k(j)^2 \big)^{m_{j}} \right] = \E\left[ \prod_{j = 1}^n  a (2m_j) \mathbf v(j)^{m_j} \right].
\eeq

\noindent This yields the desired conclusion, since \eqref{e:allm} holds for all $(m_1, m_2, \ldots , m_n) \in \mathbb{Z}_{\ge 0}^n$ and $\mathbb{E} [\mathcal{N}^{2j}] = a (2j)$, for any integer $j \ge 0$, if $\mathcal{N}$ is a standard Gaussian random variable. 
\end{proof}

\begin{proof}[Proof of \Cref{c:median}.] By \cite[Theorem 1.6(ii)]{bordenave2011spectrum}, 
\beq
\label{global0}
\varrho_\alpha(0) =  \frac{1}{\pi}\Gamma\left( 1 + \frac{2}{\alpha}  \right) \left(  \frac{\Gamma \big( 1 - \frac{\alpha}{2} \big)}{\Gamma \big( 1 + \frac{\alpha}{2} \big)} \right)^{1/\alpha}.
\eeq
By \cite[Lemma 4.3(ii)]{bordenave2011spectrum}, $\vstar(0)$ has the same law as $\Upsilon^{-1}$, where $\Upsilon$ is a one-sided $\frac{\alpha}{2}$-stable law with Laplace transform
\beq
\E \big[ \exp(-t \Upsilon) \big] = \exp \Bigg(-t^{\alpha/2} \bigg( \frac{\Gamma\left( 1 + \frac{\alpha}{2}\right)}{\Gamma\left( 1 - \frac{\alpha}{2}  \right)} \bigg)^{1/2} \Bigg), \quad \text{for $t \ge 0$.}
\eeq

\noindent Since a $(1, 1)$ $\frac{\alpha}{2}$-stable law $S$ has Laplace transform $\exp\left(-t^{\alpha/2} \right)$, $\Upsilon$ has the same law as \beq \left(\frac{\Gamma\left( 1 + \frac{\alpha}{2}\right)}{\Gamma\left( 1 - \frac{\alpha}{2}  \right)} \right)^{1/\alpha} S,\eeq so the conclusion follows from \Cref{t:main2}, \eqref{global0}, and the fact (see \Cref{d:arbitrary}) that $\mathcal{U}_{\star} (E) = \big( \pi \varrho_{\alpha} (E) \big)^{-1} \mathcal{R}_{\star} (E)$.
\end{proof}

\section{Preliminary results}

\label{s:preliminary}

In this section we collect several miscellaneous known results that will be used throughout the paper. After recalling general estimates and identities on resolvent matrices in \Cref{EstimatesResolvent}, we state several facts about the density $\rho_{\alpha}$ in \Cref{EstimatesMeasure}. In \Cref{s:removed}, we recall several results about the removed model $\textbf{X}_s$. Finally, in \Cref{s:hgammaestimates} we recall properties of a certain matrix interpolating between $\textbf{X}_0$ and $\textbf{H}$.

\subsection{Resolvent identities and estimates}

\label{EstimatesResolvent}

For any invertible $\textbf{K}, \textbf{M} \in \text{Mat}_{N \times N}$, we have 
\begin{flalign}
\label{ab}
\textbf{K}^{-1} - \textbf{M}^{-1} = \textbf{K}^{-1} (\textbf{M} - \textbf{K}) \textbf{M}. 
\end{flalign}

\noindent Next, assume $z = E + \mathrm{i} \eta \in \mathbb{H}$ and $\textbf{K} = \{ K_{ij} \} =(\textbf{M} - z)^{-1}$. Then, we have the bound 
\begin{flalign}
\label{kij}
\displaystyle\max_{1 \le i, j \le N} |K_{ij}| \le \frac{1}{\eta},
\end{flalign}

\noindent and the Ward identity
\begin{flalign}
\label{kijsum} 
\displaystyle\sum_{j = 1}^N |K_{ij}|^2 = \displaystyle\frac{\Im K_{ii}}{\eta}.
\end{flalign}

 \subsection{The density $\varrho_\alpha$}

\label{EstimatesMeasure}

The following properties of the density $\varrho_\alpha$ are proved in \Cref{s:appendixA}; here, we recall the $\gamma_i$ from \eqref{gammaalphai}. 

\bel\label{l:rhocontinuity}\label{l:mabounds}
There exists a (deterministic) countable set $\mathcal A \subset (0,2)$ with no accumulation points in $(0,2)$ and constants $C, c>0$  such that the following statements hold for $\alpha \in (0,2) \setminus \mathcal A$. 
\begin{enumerate}
\item For real numbers $E_1, E_2 \in [-c, c]$, 
\beq\label{e:rhoacontinuity}
\big| \varrho_\alpha(E_1) - \varrho_\alpha (E_2) \big| \le C|E_1 - E_2|, \qquad c \le \varrho_\alpha(E_1) \le C.
\eeq
\item For real numbers $E_1, E_2 \in [-c, c]$, and any $\eta >0$, we have
\beq\label{e:macontinuity}
	\big| \Im m_\alpha(E_1 + \iu \eta ) - \Im m_\alpha(E_2 + \iu \eta) \big| \le C| E_1 - E_2| + C \eta.  
	\eeq

\item 
	For real numbers $|E| < c $ and $\eta \in (0, c ]$, and any integer $j \in \big[ (1/2 - c) N, (1/2 + c) N \big]$,  
	\beq\label{e:mabounds}
	c \le \big| \Im m_\alpha( E + \iu \eta ) \big| \le C, \quad c \le \big| \Im m_\alpha( \gamma_j + \iu \eta ) \big| \le C.
	\eeq

\end{enumerate}
	\eel

\subsection{Removed model}

\label{s:removed}

In this section we recall several results concerning the resolvent $\textbf{R} (s, z)$ and Stieltjes transform $m_N (s, z)$ of $\textbf{X}_s$ (recall \eqref{e:Xt} and \eqref{rsz}). In what follows, we recall that the $i$-th eigenvalue of $\X_s$ is denoted by $\lambda_i(s)$ and its associated unit eigenvector is denoted by $\u_i(s)$. For any constants $C, \delta > 0$, we define the two spectral domains 
\begin{flalign}
& \mathcal D_{C, \delta} = \left\{  z = E + \iu\eta \colon |E| \le \frac{1}{C},\, N^{-1 + \delta} \le \eta \le \frac{1}{C}  \right\}, \label{d1} \\
&  \widetilde {\mathcal D}_{C,\delta} = \left\{  z = E + \iu\eta \colon |E| \le \frac{1}{C},\, N^{-1/2 + \delta } \le \eta \le \frac{1}{C}  \right\} \label{d2}.
\end{flalign}

We also recall the free convolution of $\X$ with the semicircle law is defined to be the probability measure on $\mathbb R$ whose Stiletjes transform satisfies the equation
\begin{equation}\label{e:mfcdef}
\mfc (s, z)  = \frac{1}{N} \sum_{i=1}^N \frac{1}{\lambda_i(0) - z - s \mfc(s,z)}.
\end{equation}
Basic facts about the free convolution, including its existence and uniqueness, may be found in \cite{biane1997free}. It has a density, $\dsc(s,x )\, dx$, and its classical eigenvalue locations are defined for $1 \le i \le N$ by 
\begin{equation}\label{e:gammaidef}
\gamma_i(s) = \inf \left\{  y \in \mathbb{R} \colon \int_{-\infty}^y \dsc(s,x)\, dx \ge \frac{i}{N} \right\}.
\end{equation} 
These are random variables which depend on the initial data $\big( \lambda_i(0) \big)_{i=1}^N$ determined by $\textbf{X}_0$.

The following intermediate local law for $\textbf{R} (s, z)$ (on scale $\eta \gg N^{-1/2 + \delta }$) was essentially shown as \cite[Theorem 3.5]{aggarwal2018goe}. 

\begin{lem}[{\cite[Theorem 3.5]{aggarwal2018goe}}]
	
	\label{l:Xlocallaw}
	
	There exists a (deterministic) countable set $\mathcal A \subset (0,2)$ with no accumulation points in $(0,2)$  such that the following holds for $\alpha \in (0,2) \setminus \mathcal A$. 
	For any fixed real number $\delta > 0$ with $\delta < \max \big\{ \frac{(b- 1/\alpha)(2 - \alpha)}{20}, \frac{1}{2} \big\}$, there exists a constant $C = C(\delta) > 0$ such that for $s\in [ 0, N^{-\delta}]$, we have with overwhelming probability that
	\begin{flalign}
	\label{mn0z}
	\sup_{z \in \widetilde {\mathcal{D}}_{C, \delta}} \big |  m_N(s, z )   - m_\alpha(z)\big| < C N^{ - \alpha \delta / 8}, \quad   
		\sup_{z \in \widetilde {\mathcal{D}}_{C, \delta}} \max_{1 \le j \le N} \big| R_{jj}(s, z) \big| < (\log N)^C,
	\end{flalign}
	where we recall $\widetilde{D}_{C, \delta}$ from \eqref{d2}.
	\eel
	In fact, \cite[Theorem 3.5]{aggarwal2018goe} was only stated in the case $s = 0$, but it is quickly verified that the same proof applies for arbitrary $s \in [0, N^{-\delta}]$, especially since $\textbf{H} + s^{1 / 2} \textbf{W}$ satisfies the conditions in \Cref{momentassumption} for $s \in [0, N^{-\delta}]$ if $\textbf{H}$ does.

	 From now on, we always assume $\alpha \in (0,2) \setminus \mathcal A$, where $\mathcal A$ is the set from \Cref{l:Xlocallaw}, even when this is not noted explicitly. \eer The next lemma provides more local estimates on $\textbf{R} (s, z)$ and $\textbf{X}_s$ (on scales around $N^{-1}$), if $N^{-1/2  } \ll s \ll 1$; they are consequences of \Cref{l:Xlocallaw} using results of \cite{landon2017convergence}. Specifically, \eqref{rijestimate} follows from \cite[Theorem 3.3]{landon2017convergence}, and \eqref{mszestimate} follows from \eqref{rijestimate} and the first estimate in \eqref{mn0z}; \eqref{e:xtrigidity} follows from \cite[Theorem 3.5]{landon2017convergence}; and \eqref{lambdaii1s} follows from \cite[Theorem 3.6]{landon2017convergence}. The hypotheses of these statements from \cite{landon2017convergence} are all verified by the first bound in \eqref{mn0z}.  The final estimate is an immediate consequence of \eqref{rijestimate}, \eqref{mszestimate}, and \eqref{e:macontinuity}.\eer
	
	In the following, we recall the $\gamma_i (s)$ were defined in \eqref{e:gammaidef}.

	\bel[{\cite{landon2017convergence}}] 
	
	\label{l:thatlaw} \label{l:xtlevel}
	
	There exists a constant $K  > 0$ such that the following holds for any real numbers $r, \delta > 0$.
	
	\begin{enumerate}
		\item Set $\mathcal{D} = \mathcal{D}_{K, \delta}$ and $\widetilde{\mathcal{D}} = \widetilde{\mathcal{D}}_{K, \delta}$, where we recall the definitions \eqref{d1} and \eqref{d2}, respectively. With overwhelming probability, we have that 
		\begin{flalign}
		\label{rijestimate}
		& \sup_{s \in [N^{-1/2 + \delta}, N^{-\delta}]}\sup_{z \in \mathcal D} \big|  m_N (s, z)   - \mfc (s,z ) \big| <  \frac{N^\delta}{N\eta},  \\
		& \sup_{s \in [N^{-1/2 + \delta}, N^{-\delta}]}\sup_{ z\in \widetilde{\mathcal D}} \big| m_\alpha(z) - \mfc(s, z) \big| < N^{-\alpha \delta/16} \label{mszestimate}.
		\end{flalign}
		
		\item With overwhelming probability, we have that 
		\begin{equation}\label{e:xtrigidity}
		\sup_{s \in [N^{-1/2 + \delta}, N^{-\delta}]} \big| \lambda_i(s) - \gamma_i(s) \big| \le N^{-1 + \delta}.
		\end{equation}

		\item For any $\varepsilon \in (0, 1)$ and $ s \in [N^{-1/2 + \delta}, N^{-\delta}]$, we have for sufficiently large $N$ that
		\beq
		\label{lambdaii1s} 
		\P\left(  \big| \lambda_i (s) - \lambda_{i+1} (s) \big| \le \frac{\eps}{N} \right) \le N^\delta \eps^{2-r}.
		\eeq
		
		\item 	 For $E_1, E_2 \in [-K^{-1}, K^{-1}]$ and $\eta \in [ N^{-1/2 + \delta}, N^{-\delta}]$, we have
\beq\label{e:mncontinuity}
	\big| \Im m_N(E_1 + \iu \eta ) - \Im m_N(E_2 + \iu \eta) \big| \le C| E_1 - E_2| + C\eta + C N^{- \alpha \delta /16}
	\eeq
	with overwhelming probability.

	\end{enumerate}
	\eel

	The following lemma provides resolvent and delocalization estimates for $\X_s$. The first estimate in \eqref{rijestimate2} below follows from \cite[Proposition 3.9]{aggarwal2018goe}, whose hypotheses are verified by the second bound in \eqref{mn0z}. We omit the proof of the second since, given the first, it follows by standard arguments (for example, see the proof of \cite[Theorem 2.10]{benaych2016lectures}). 
	
	\bel[{\cite[Proposition 3.9]{aggarwal2018goe}}] \label{l:utdeloc}
	
	There exists a constant $K > 0$ such that the following holds. Fix real numbers $\delta > 0$ and $s \in [N^{-1 / 2 + \delta}, N^{-\delta}]$, and a unit vector $\q \in \mathbb{R}^N$ with stable support. For each index $i \in [1, N]$ such that  $| \gamma_i (s)| < K^{-1}$, we have  with overwhelming probability that 
	\beq
	\label{rijestimate2}
	\sup_{z \in \mathcal D} \max_{1 \le j, k \le N} \big|R_{jk}(s, z)\big| < N^{\delta}, \qquad \big\langle \u_i(s) , \q  \big\rangle^2 \le  N^{-1 + \delta}.
	\eeq
	\eel

	\subsection{Interpolating matrix} \label{s:hgammaestimates} 
	
	Recalling $t$ from \eqref{t}, define the interpolating matrix 
	\beq 
	\label{hgamma}
	\bH^\gamma = \{ H^\gamma_{ij} \} = \gamma\bA + \X + (1 -\gamma^2)^{1/2} t^{1/2} \W,
	\eeq
	where $\W$ is an independent $N \times N$ GOE matrix. Namely, it is an $N \times N$ real symmetric random matrix $\mathbf{W}_N = \{ w_{ij} \}$, whose upper triangular entries $w_{ij}$ are mutually independent Gaussian random variables with variances $(1 + \one_{i = j}) N^{-1}$.
		
	The following lemma estimates the entries of the resolvent matrix $\mathbf G^\gamma = \big\{ G_{ij}^{\gamma} (z) \big\} = ( \bH^\gamma - z)^{-1}$ and provides complete eigenvector delocalization for $\textbf{H}^{\gamma}$. The first bound in \eqref{estimateuihgamma} below was obtained as \cite[Theorem 3.16]{aggarwal2018goe}; given this, the second bound there follows by standard arguments (again, see the proof of \cite[Theorem 2.10]{benaych2016lectures}).

	\bel[{\cite[Theorem 3.16]{aggarwal2018goe}}]\label{l:gammagreen}
	
	There exists a constant $K > 0$ such that the following holds. Fix real numbers $\delta > 0$ and $\gamma \in [0, 1]$, and abbreviate $\mathcal D = \mathcal{D}_{K, \delta}$ (recall \eqref{d1}). Let $\u_i(\mathbf H^\gamma)$ be a unit eigenvector of $\bH^\gamma$ such that the corresponding eigenvalue ${\lambda_i(\mathbf H^\gamma)}$ satisfies $\big|\lambda_i(\mathbf H^\gamma) \big| \le K^{-1}$. Then, with overwhelming probability we have the bounds  
	\beq
	\label{estimateuihgamma}
	\sup_{0\le \gamma \le 1} \sup_{z \in \mathcal D} \max_{1 \le j, k \le N} \left|  G^\gamma_{jk}(z) \right| < N^\delta; \qquad 	\big\| \u_i (\textbf{\emph{H}}^{\gamma}) \big\|_\infty  \le N^{ - 1/2 + \delta}.
	\eeq

	\eel
	
	In view of the identity  
	\begin{flalign}
	\eta \displaystyle\sum_{j = 1}^N \Big(\big| \lambda_i (\textbf{M}) - E \big|^2 + \eta^2\Big)^{-1} = N^{-1} \Im \text{Tr} \big( \textbf{M} - z \big)^{-1},
	\end{flalign}
	
	\noindent which holds for any $N \times N$ matrix $\textbf{M}$ and complex number $z = E + \mathrm{i} \eta \in \mathbb{H}$, \Cref{l:thatlaw} and \Cref{l:gammagreen} together quickly imply the following lemma that bounds the number of eigenvalues of $\textbf{H}^{\gamma}$ or $\mathbf X_s$ in a given interval.

	\bel
	
	\label{l:disperse}

	For any real number $\delta > 0$, there exist constants $K > 0$ and $C = C(\delta) > 0$ such that the following holds. For any interval $I \subseteq [-K^{-1}, K^{-1}]$ of length $| I |  \ge N^{-1 + \delta}$, we have with overwhelming probability that
	\begin{flalign}\label{e:disperse}
		& \displaystyle\sup_{\gamma \in [0, 1]} \Big| \big\{ i : \lambda_i(\mathbf H^\gamma) \in I \big\} \Big| \le C |I| N^{1 + \delta}; \qquad  \displaystyle\sup_{s \in [N^{-1 / 2 + \delta}, N^{-\delta}]} \Big| \big\{ i : \lambda_i(\textbf{\emph{X}}_s) \in I \big\} \Big| \le C |I| N.
	\end{flalign}
	
	\eel

	The following result states that the $i$-th eigenvalue of $\textbf{H}^{\gamma}$ and $\textbf{X}_s$ is close to $0$ if $i$ is close to $\frac{N}{2}$. Its proof will be given in \Cref{s:appendixA}.

	\bel\label{l:hgammarigid}
	
	For each real number $c_1>0$, there exists a constant $c_2>0$ such that the eigenvalues $\lambda_i(\bH^\gamma)$ of $\bH^\gamma$ and $\lambda_i(\mathbf{X}_s)$ of $\mathbf{X}_s$ satisfy
	\beq
	\sup_{\gamma\in[0,1]}\big| \lambda_i(\bH^\gamma) \big| < c_1; \qquad \sup_{s\in[0,1]} \big| \lambda_i(\mathbf{X}_s)\big| < c_1; \qquad \displaystyle\sup_{s \in [0, 1]} \big| \gamma_i (s) \big| < c_1, 
	\eeq
	
	\noindent for each $i \in \big[ (1/2 - c_2 ) N, (1/2 + c_2) N \big]$, with overwhelming probability.
	\eel

In \Cref{s:appendixA}, we use \Cref{l:hgammarigid} and \Cref{l:thatlaw} to deduce the following rigidity statements comparing the classical locations $\widehat{\gamma}_i (s)$ to the $\gamma_i$, and the $\widehat\gamma_i(s)$ to the $\gamma_i(s)$ (recall \eqref{gammaalphai}, \eqref{e:gammaidef}, and \eqref{hatgammaalphai}). 

	\bel  \label{l:newrigidity}
	Fix $\delta >0$. There exist constants $C, c_1 >0$ and $c = c(\delta) > 0$ such that for each $i \in \big[ (1/2 - c_1 ) N, (1/2 + c_1) N \big]$, we deterministically have the bound
	\beq\label{e:gammacompare}
	\sup_{s \in [ N^{-1/2 + \delta}, N^{-\delta} ] }  \big| \widehat \gamma_i(s) - \gamma_i  \big| \le C N^{-c},
	\eeq
	
	\noindent and with overwhelming probability the bound
	\beq
	\label{e:newrigidity} \sup_{s \in [ N^{-1/2 + \delta}, N^{-\delta} ] }  \big| \widehat \gamma_i(s) - \gamma_i(s) \big| \le C N^{ - 1/2 + \delta}.
	\eeq
	
	\eel

\section{Comparison} \label{s:comparison}

This section establishes \Cref{l:maincomparison}, which compares the eigenvector statistics of $\X_t$ to those of $\mathbf H$. \Cref{CompareProofv} establishes this result assuming a general comparison estimate, certain derivative bounds, and a level repulsion estimate for $\textbf{H}^{\gamma}$. We then prove the comparison estimate in \Cref{ProofCompareFunction}; the necessary derivative bounds in \Cref{s:lrh}; and the level repulsion estimate in \Cref{s:vectorcompare}.

\subsection{Proof of \Cref{l:maincomparison}}

\label{CompareProofv}

In this section we establish \Cref{l:maincomparison} assuming \Cref{gcompare}, \Cref{l:smoothev}, \Cref{deterministicqestimate}, and \Cref{l:lrh} below. In what follows, for any $\kappa \in [0, 1]$, $\textbf{M} = \{ m_{ij} \} \in \text{Mat}_{N \times N}$, and $a, b \in [1, N]$, we  define $\Theta_{\kappa}^{(a, b)} \textbf{M} \in \text{Mat}_{N \times N}$ as follows. Recalling $\rho$ from \eqref{e:parameters}, if $|m_{ab}| = |m_{ba}| \ge N^{-\rho}$, then set $\Theta_{\kappa}^{(a, b)} \textbf{M} = \textbf{M}$. Otherwise, if $|m_{ab}| = |m_{ba}| < N^{-\rho}$, set $\Theta_{\kappa}^{(a, b)} \textbf{M}$ to be the $N \times N$ matrix whose $(i, j)$ entry is equal to $m_{ij}$ if $(i, j) \notin \big\{ (a, b), (b, a) \big\}$ and is equal to $\kappa m_{ab} = \kappa m_{ba}$ otherwise. Moreover, for any differentiable function $F: \text{Mat}_{N \times N} \rightarrow \mathbb{C}$ and indices $a, b \in [1, N]$, we define $\partial_{ab} F$ to be the derivative of $F$ with respect to $m_{ab}$. 

We first state the following comparison theorem between functions of $\textbf{H}^0 = \textbf{X}_t$ and $\textbf{H}^{\gamma}$ (recall \eqref{hgamma}), which will be established in \Cref{ProofCompareFunction} below. 

\begin{lem} 
	
\label{gcompare}

	There exists a constant $c > 0$ such that the following holds. Let $F\colon \text{\emph{Mat}}_{N \times N} \rightarrow \mathbb{C}$ denote a smooth function, and suppose $K, L > 1$ are such that  
	\begin{flalign}
	\label{definitionk} 
	\displaystyle\max_{0 \le j \le 4}\displaystyle\sup_{0 \le \gamma \le 1} \displaystyle\max_{1 \le a, b \le N} \displaystyle\sup_{0 \le \kappa \le 1} \Big| \partial_{ab}^{(j)} F \big( \Theta_{\kappa}^{(a, b)} \textbf{\emph{H}}^{\gamma} \big) \Big| \le K
	\end{flalign} 
	
	\noindent holds with overwhelming probability, and 
	\begin{flalign}
	\label{definitionl} 
	\displaystyle\max_{0 \le j \le 4}\displaystyle\sup_{0 \le \gamma \le 1} \displaystyle\max_{1 \le a, b \le N} \displaystyle\sup_{0 \le \kappa \le 1} \Big| \partial_{ab}^{(j)} F \big( \Theta_{\kappa}^{(a, b)} \textbf{\emph{H}}^{\gamma} \big) \Big| \le L
	\end{flalign}
	
	\noindent holds deterministically. Then, for any constant $D > 0$, there exists a constant $C = C(D) > 0$ such that 
	\begin{flalign}
	\displaystyle\sup_{0 \le \gamma \le 1} \big| F (\textbf{\emph{H}}^{\gamma}) - F (\textbf{\emph{H}}^0) \big| \le K N^{-c} + C L N^{-D}. 
	\end{flalign}

\end{lem}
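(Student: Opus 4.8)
The plan is to carry out a standard continuity (Lindeberg-type swapping) argument along the interpolation parameter $\gamma$, combined with a Taylor expansion in each matrix entry and cancellation of low-order terms. First I would fix $\gamma$ and write $\partial_\gamma \E\big[F(\textbf{H}^\gamma)\big]$ explicitly via the chain rule: since $\textbf{H}^\gamma = \gamma \bA + \X + (1-\gamma^2)^{1/2} t^{1/2}\W$, differentiating in $\gamma$ produces the sum $\sum_{a\le b} \big(\partial_\gamma H^\gamma_{ab}\big)\,\partial_{ab} F(\textbf{H}^\gamma)$, where $\partial_\gamma H^\gamma_{ab} = A_{ab} - \gamma(1-\gamma^2)^{-1/2} t^{1/2} w_{ab}$. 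The goal is to show $\sup_\gamma \big|\partial_\gamma \E[F(\textbf{H}^\gamma)]\big|$ is small, and then integrate in $\gamma$ over $[0,1]$; actually since the statement is a pathwise (not expectation) bound, I would instead localize entry-by-entry: for each pair $(a,b)$, compare $F$ evaluated with the $(a,b)$ entry ``turned on'' versus ``off'' via the operator $\Theta_\kappa^{(a,b)}$, Taylor expand $F\big(\Theta_\kappa^{(a,b)}\textbf{H}^\gamma\big)$ in $\kappa$ around $\kappa=0$ to fourth order, and sum the error contributions over all $\binom{N}{2}$ pairs.

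The key mechanism is moment matching between the small entries of $\bA$ (those with $|H^\gamma_{ab}| < N^{-\rho}$, which are the only ones affected by $\Theta_\kappa^{(a,b)}$) and the corresponding Gaussian increments in $\sqrt{t}\,\W$. By the choice of $t$ in \eqref{t}, the variances are matched, so the first- and second-order terms in the Taylor expansion cancel after taking expectations; the third moment vanishes by symmetry of the entries (recall $Z+J$ is symmetric); and the fourth-order remainder term is controlled using the derivative bound $K$ from \eqref{definitionk} together with the fact that $\E\big[|A_{ab}|^4\big] \lesssim N^{-1}\cdot N^{-(2-\alpha)(\text{something})}$ — more precisely, $\E\big[ H_{11}^4 \one_{|H_{11}|<N^{-\nu}}\big] \lesssim N^{-1-(4-\alpha)\nu+\nu\cdot\text{const}}$, which by the parameter constraint $\nu > \frac{1}{4-\alpha}$ in \eqref{e:parameters} gives a gain of a negative power of $N$ after multiplying by the number of pairs $N^2$. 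This is exactly what produces the $K N^{-c}$ term. The large entries (those $\ge N^{-\rho}$) are untouched by $\Theta_\kappa^{(a,b)}$ and contribute nothing to the swap. The deterministic bound $L$ is used only to handle the complementary low-probability event where the overwhelming-probability bound \eqref{definitionk} fails, contributing the $C L N^{-D}$ term via a crude union bound.

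I would structure the argument as follows: (1) set up the interpolation and reduce the pathwise claim to a telescoping sum over pairs $(a,b)$, each step replacing the contribution of one small $\bA$-entry by the matched Gaussian contribution from $\W$; (2) for a fixed pair, perform the fourth-order Taylor expansion of $F\big(\Theta_\kappa^{(a,b)}\textbf{H}^\gamma\big)$ in the scalar variable $\kappa m_{ab}$, so that the intermediate-point evaluations are all of the form $\Theta_{\kappa'}^{(a,b)}\textbf{H}^\gamma$ and hence covered by hypotheses \eqref{definitionk}--\eqref{definitionl}; (3) take conditional expectations (conditioning on the label $\Psi$ and on all other entries), using the moment-matching and symmetry to kill terms of order $\le 3$; (4) bound the order-$4$ remainder by $K$ (or $L$ off the good event) times the relevant fourth moment, sum over the $\sim N^2$ pairs, and invoke the parameter inequalities in \eqref{e:parameters} to extract a power saving $N^{-c}$; (5) handle the bad event by the deterministic bound and Chebyshev/union bound, yielding the $CLN^{-D}$ term.

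The main obstacle I anticipate is bookkeeping the conditioning correctly: the small and large entries of $\textbf{H}^\gamma$ are correlated (their dependence runs through the Bernoulli labels $\psi_{ab},\chi_{ab}$), so to get a clean moment-matching statement one must condition on the label $\Psi$ first, as in \cite{aggarwal2018goe}, and only then does the small-entry distribution factor in the desired way and match the variance of the Gaussian piece. A secondary subtlety is that the Gaussian increment $\gamma(1-\gamma^2)^{-1/2}t^{1/2}w_{ab}$ appearing in $\partial_\gamma H^\gamma_{ab}$ has a $\gamma$-dependent, potentially large prefactor as $\gamma\to1$; this is why it is cleaner to run the swap entry-by-entry between the two \emph{endpoint} ensembles $\textbf{H}^0=\textbf{X}_t$ and $\textbf{H}^1$ (and more generally $\textbf{H}^\gamma$) using the explicit decompositions, rather than differentiating in $\gamma$, so that one only ever compares $A_{ab}$ (size $\le N^{-\nu}$) against a Gaussian of variance $t/N \sim N^{-1+(\alpha-2)\nu}$, both genuinely small. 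Modulo these care points, the estimate is a routine Lindeberg swap, and the non-routine content is entirely in verifying that the parameter choices in \eqref{e:parameters} make the fourth-moment sum summable.
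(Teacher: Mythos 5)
Your proposal has the right core mechanism (condition on the label $\Psi$, match the variance of the small $\bA$-entries against the Gaussian increment via the choice of $t$ in \eqref{t}, kill the odd-order terms by symmetry, bound the fourth-order remainder by $K$ times fourth moments summed over $N^2$ pairs, and use $L$ only on the bad event), but two points are genuinely wrong or missing. First, you assert that the entries with $|H^\gamma_{ab}|\ge N^{-\rho}$ ``are untouched by $\Theta_\kappa^{(a,b)}$ and contribute nothing to the swap.'' The operator $\Theta$ indeed fixes them, but the comparison between $\mathbf H^\gamma$ and $\mathbf H^0$ does not: for a pair with $\psi_{ab}=1$ one has $A_{ab}=0$, so the Gaussian increment $(1-\gamma^2)^{1/2}t^{1/2}w_{ab}$ is \emph{unmatched} by any $\bA$-contribution and its effect must be estimated, not cancelled. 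The paper handles these pairs separately by Gaussian integration by parts, $\E_{\Psi}\big[\partial_{ij}F\cdot \gamma t^{1/2}(1-\gamma^2)^{-1/2}w_{ij}\big]=t\gamma N^{-1}\E_{\Psi}\big[\partial_{ij}^{2}F\big]$, together with a Hoeffding bound showing there are at most $CN^{1+\alpha\rho}$ such pairs; the resulting contribution $CKtN^{\alpha\rho}\le CKN^{\alpha\rho-(2-\alpha)\nu}$ is small precisely because of the last inequality in \eqref{e:parameters}. Omitting this case leaves a whole class of entries uncontrolled and skips one of the two essential uses of the parameter constraints.

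Second, your switch from the $\gamma$-derivative to a telescoping entry-by-entry swap between the endpoint ensembles undermines your own step (2): after the first $k$ entries have been replaced, the hybrid matrix is not of the form $\Theta_{\kappa}^{(a,b)}\mathbf H^{\gamma}$ for any single $\gamma$, so the hypotheses \eqref{definitionk}--\eqref{definitionl} do not control the derivatives at the intermediate Taylor points. The paper's route --- differentiate $\E\big[F(\mathbf H^\gamma)\big]$ in $\gamma$, Taylor expand $\partial_{ij}F(\mathbf H^\gamma)$ around $\mathbf S=\Theta_0^{(i,j)}\mathbf H^{\gamma}$ with the remainder evaluated at $\Theta_\kappa^{(i,j)}\mathbf H^{\gamma}$, and integrate over $\gamma\in[0,1]$ --- keeps every evaluation point inside the family covered by the hypotheses, and the $\gamma$-dependent prefactor you worried about is harmless after the Gaussian integration by parts. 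Relatedly, neither method can yield a pathwise bound: the cancellation of the terms of order at most three occurs only under (conditional) expectation, and the lemma is in fact applied later only in the form $\big|\E[F(\mathbf H^\gamma)]-\E[F(\mathbf H^0)]\big|\le KN^{-c}+CLN^{-D}$; your attempt to localize entry by entry in order to obtain a pathwise statement cannot succeed and is not needed.
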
 

Next we require the following function, originally introduced in \cite[Section 3.2]{TV11}, that measures how close eigenvalues of some matrix $\textbf{A}$ are to a given eigenvalue.

\begin{defn} \label{fmdef}

Let $\textbf{A} \in \text{Mat}_{N \times N}$. If $1 \le i \le N$ is such that $\lambda_i(\mathbf A)$ is an eigenvalue of a matrix $\mathbf A$ with multiplicity one, we define
\beq
Q_i(\mathbf A) = \frac{1}{N^2} \sum_{\substack{1 \le j \le N \\ j \neq i}} \big| \lambda_j(\mathbf A) - \lambda_i(\mathbf A) \big|^{-2}.
\eeq
To deal with the case of multiplicity greater than one, we introduce a cutoff. For any $M > 0$, we fix a smooth function $ f_M\colon \mathbb{R}_{\ge 0} \rightarrow \mathbb{R}$ such that there exists a constant $C > 0$ (independent of $M$ and $N$) satisfying the following two properties.
\begin{enumerate}
\item For any $x \in \mathbb{R}_{> 0}$, we have that $\big| f'_M(x) \big|  +  \big| f''_M(x) \big|  + \big| f'''_M (x) \big|  \le C$.

\item If $x\in [ 0 , M]$ then $\big| f_M (x) - x \big| \le 1$, and if $x \ge M$, then $ f_M (x) = M$. 
\end{enumerate}
The function $ f_M \big( Q_i(\mathbf A) \big)$ is then well-defined and smooth on real symmetric matrices. 

\end{defn} 

The following two lemmas control the derivatives of the $Q_i (\textbf{H})$ and of the eigenvector entries of $\mathbf H^\gamma$ with respect to the matrix entries of $\textbf{H}$; the first is an overwhelming probability bound, and the second is a deterministic bound. They will be established in \Cref{s:lrh} below. 

\bel

\label{l:smoothev}

There exists a constant $c>0$ such that the following holds. Fix real numbers $\gamma, \kappa \in [0, 1]$, a constant $\omega > 0$, and integers $i, a, b \in [1, N]$. Set $M = N^{2 \omega}$, and assume that 
\begin{flalign}
\label{e:qiassume}  
\Big| \lambda_i \big( \Theta_{\kappa}^{(a, b)} \textbf{\emph{H}}^{\gamma} \big) \Big| < c, \quad \text{and} \quad  Q_i \big( \Theta_{\kappa}^{(a, b)} \bH^\gamma \big) \le M = N^{2 \omega}
\end{flalign}

\noindent both hold with overwhelming probability. Then
\begin{equation}
\label{e:qibound}
 \bigg| \partial^{(k)}_{ab}  \Big(  Q_i\big( \Theta_{\kappa}^{(a, b)} \bH^\gamma \big) \Big) \bigg| \le C N^{10k(\omega +\delta)}
\end{equation}

\noindent also holds with overwhelming probability, for any integer $0 \le k \le 4$.

Moreover, for any $\textbf{\emph{q}} \in \mathbb{R}^N$, there exists a constant $C = C \big( |\supp \q| \big) > 0$ such that 
\begin{equation}
\label{derivativeuihgammaestimate}
 \Bigg| \partial^{(k)}_{ab} \bigg( \Big\langle \q, \u_i \big(\Theta_{\kappa}^{(ab)} \bH^\gamma \big) \Big\rangle^2 \bigg) \Bigg| \le C N^{-1 +10k(\omega + \delta)}
 \end{equation}

\noindent also holds with overwhelming probability, for any integer $0 \le k \le 4$. 
\eel 

\begin{lem} 

\label{deterministicqestimate} 

Fix real numbers $\gamma, \kappa \in [0, 1]$, a constant $\omega > 0$, and integers $i, a, b \in [1, N]$; assume that  $Q_i \big( \Theta_{\kappa}^{(a, b)} \textbf{\emph{H}}^{\gamma} \big) < N^{2 \omega}$. Then, for any integers $k \in [0, 4]$ and $1 \le i \le N$, we have the deterministic bounds
\beq\label{e:detbounds}
\bigg| \partial^{(k)}_{ab}  \Big( Q_i \big(\Theta_{\kappa}^{(a, b)} \textbf{\emph{H}}^{\gamma} \big) \Big) \bigg| \le C N^{10 + 6 \omega}; \qquad \Bigg| \partial^{(k)}_{ab} \bigg( \Big\langle \q, \u_i \big(\Theta_{\kappa}^{(ab)} \bH^\gamma \big) \Big\rangle^2 \bigg) \Bigg| \le C N^{15 + 10 \omega}.
\eeq

\end{lem}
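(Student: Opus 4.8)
The plan is to represent $Q_i(\mathbf A)$ and $\big\langle \q, \u_i(\mathbf A)\big\rangle^2$ (with $\mathbf A := \Theta_{\kappa}^{(a,b)} \bH^\gamma$) as contour integrals of the resolvent $\mathbf G(w) := (\mathbf A - w)^{-1}$ around the eigenvalue $\lambda_i = \lambda_i(\mathbf A)$, and then differentiate under the integral sign; the argument runs in parallel with that of \Cref{l:smoothev}, with every probabilistic input used there (the local law and the delocalization/rigidity estimates) replaced by the deterministic consequences of the standing hypothesis $Q_i(\mathbf A) < N^{2\omega}$. That hypothesis forces $\tfrac{1}{N^2}|\lambda_j - \lambda_i|^{-2} < N^{2\omega}$, hence $|\lambda_j - \lambda_i| > N^{-1-\omega}$ for every $j \ne i$; in particular $\lambda_i$ is a simple eigenvalue, and the positively oriented circle $\Gamma_i$ of radius $r := \tfrac13 \min_{j \ne i}|\lambda_j - \lambda_i| > \tfrac13 N^{-1-\omega}$ centered at $\lambda_i$ encloses $\lambda_i$ and no other eigenvalue, with $\operatorname{dist}\!\big(w, \operatorname{spec} \mathbf A\big) \ge r$ and thus $\|\mathbf G(w)\| \le r^{-1} < 3 N^{1+\omega}$ for all $w \in \Gamma_i$. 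By continuity of eigenvalues these properties persist when the $(a,b)$-entry of $\mathbf A$ is perturbed by an amount $\ll r$, so all identities below hold, and are real-analytic, in a neighborhood of the base point.

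A residue computation gives the representations
\[
\big\langle \q, \u_i(\mathbf A) \big\rangle^2 \;=\; -\frac{1}{2\pi \iu}\oint_{\Gamma_i} \big\langle \q, \mathbf G(w) \q \big\rangle\, dw, \qquad Q_i(\mathbf A) \;=\; \frac{1}{2\pi \iu\, N^2}\oint_{\Gamma_i} \frac{\tr \mathbf G(w)}{(w - \lambda_i)^2}\, dw,
\]
together with $\lambda_i = -\frac{1}{2\pi \iu}\oint_{\Gamma_i} w\, \tr \mathbf G(w)\, dw$. Let $\Delta$ be the derivative of $\mathbf A$ in its $(a,b)$-entry, so $\Delta = \mathbf E_{ab} + \mathbf E_{ba}$ (or $\mathbf E_{aa}$ if $a=b$), with $\|\Delta\| = 1$ and $\operatorname{rank}\Delta \le 2$; then $\partial_{ab}^{(m)} \mathbf G(w) = (-1)^m m!\, \mathbf G(w)\big(\Delta \mathbf G(w)\big)^m$. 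Differentiating the first representation $k \le 4$ times under the integral gives
\[
\partial_{ab}^{(k)} \big\langle \q, \u_i(\mathbf A) \big\rangle^2 \;=\; \frac{(-1)^{k+1} k!}{2\pi \iu}\oint_{\Gamma_i} \big\langle \q, \mathbf G(w)\big(\Delta \mathbf G(w)\big)^k \q \big\rangle\, dw,
\]
while differentiating the $Q_i$ representation additionally requires the Leibniz rule and the Fa\`a di Bruno expansion of $\partial_{ab}^{(l)}\big[(w - \lambda_i)^{-2}\big]$ into boundedly many terms $(w - \lambda_i)^{-2-p}\prod_s \partial_{ab}^{(l_s)}\lambda_i$ with $p \le l$ and $\sum_s l_s = l$; here, for $l \ge 1$, $\partial_{ab}^{(l)}\lambda_i = \frac{(-1)^{l+1} l!}{2\pi \iu}\oint_{\Gamma_i}(w - \lambda_i)\, \tr\!\big[\mathbf G(w)(\Delta \mathbf G(w))^l\big]\, dw$, obtained by differentiating the formula for $\lambda_i$ and using that $\oint_{\Gamma_i} \tr\!\big[\mathbf G(w)(\Delta \mathbf G(w))^l\big]\, dw = 0$ (the integrand is an exact meromorphic differential). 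Since $k \le 4$, only finitely many terms occur.

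It remains to bound these finitely many terms on $\Gamma_i$, where $|w - \lambda_i| = r$, $\|\mathbf G(w)\| \le r^{-1}$, and $\|\Delta\| = 1$. Using $\operatorname{rank}\Delta \le 2$, $\|\q\| = 1$, and $|\tr \mathbf M| \le N\|\mathbf M\|$, the integrand of $\partial_{ab}^{(k)}\langle \q, \u_i\rangle^2$ is $\lesssim r^{-(k+1)}$, so $\big|\partial_{ab}^{(k)}\langle \q, \u_i\rangle^2\big| \lesssim r^{-k} < C N^{(1+\omega)k} \le C N^{15 + 10\omega}$ for $k \le 4$; similarly $|\partial_{ab}^{(l)}\lambda_i| \lesssim r^{1-l}$ for $l \ge 1$. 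Feeding the latter into the Fa\`a di Bruno expansion gives $\big|\partial_{ab}^{(l)}[(w-\lambda_i)^{-2}]\big| \lesssim r^{-2-l}$ on $\Gamma_i$, and combining with $|\partial_{ab}^{(k_1)}\tr \mathbf G(w)| \lesssim N r^{-(k_1+1)}$ through the Leibniz rule yields $\big|\partial_{ab}^{(k)} Q_i\big| \lesssim N^{-2}\cdot r\cdot N r^{-(k+3)} = N^{-1} r^{-(k+2)} < C N^{(1+\omega)(k+2)-1} \le C N^{10 + 6\omega}$ for $k \le 4$. This establishes both inequalities in \eqref{e:detbounds}.

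The one conceptual point worth flagging is the reason for working with the resolvent contour integral rather than the Rayleigh--Schr\"odinger perturbation series for $\lambda_i$ and $\u_i$: already at second order the latter produces inverse spectral gaps $(\lambda_j - \lambda_l)^{-1}$ between pairs of eigenvalues neither of which equals $\lambda_i$, and these are \emph{not} controlled by the hypothesis $Q_i < N^{2\omega}$, which bounds only the gaps emanating from $\lambda_i$. The contour representation resums all such gaps into $\|\mathbf G(w)\|$ on $\Gamma_i$, a quantity the hypothesis does control; once this is in place, the remainder is routine differentiation-under-the-integral bookkeeping, kept finite by the restriction $k \le 4$. I expect this bookkeeping, rather than any analytic subtlety, to be the main (but entirely mechanical) work.
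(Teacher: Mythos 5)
Your proof is correct, and it takes a genuinely different route to the bound than the paper does. The paper simply reuses the machinery set up for \Cref{l:smoothev}: the derivative $\partial_{ab}^{(k)} Q_i$ is expanded (via the contour/residue calculus of \cite{huang2015bulk}) into a bounded number of explicit sums of the form $N^{-2}\sum_{j_1,\dots,j_k} \prod \theta / \prod (\lambda_i - \lambda_{j})^{\bullet}$, and each such term is then bounded crudely using $|\theta_{jk}| \le 2$ (orthonormality), the trivial count $N$ per summation index, and the gap bound $|\lambda_i - \lambda_j| \ge N^{-1-\omega}$ forced by the hypothesis $Q_i < N^{2\omega}$. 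You instead stop at the contour representation itself and never perform the residue expansion: on the circle $\Gamma_i$ of radius $r \gtrsim N^{-1-\omega}$ you control $\|\mathbf G(w)\| \le r^{-1}$, differentiate under the integral via $\partial^{(m)}_{ab}\mathbf G = (-1)^m m!\,\mathbf G(\Delta\mathbf G)^m$, and handle the $\lambda_i$-dependence of the $Q_i$-integrand by Leibniz/Fa\`a di Bruno together with the exactness identity $\tr[\mathbf G(\Delta\mathbf G)^l] = l^{-1}\partial_w \tr[(\Delta\mathbf G)^l]$. I checked your residue identities, the vanishing of $\oint \tr[\mathbf G(\Delta\mathbf G)^l]\,dw$, and the final numerology ($r^{-k} \le CN^{4+4\omega}$ and $N^{-1}r^{-(k+2)} \le CN^{5+6\omega}$ for $k \le 4$); everything lands comfortably inside the stated exponents. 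Both arguments rest on the same single input, namely $\min_{j\ne i}|\lambda_j - \lambda_i| \ge N^{-1-\omega}$; what your version buys is self-containedness (no appeal to the term list of \cite{huang2015bulk}) and slightly sharper exponents, while the paper's version buys economy, since the explicit term-by-term expansion is needed anyway in \Cref{l:smoothev} where delocalization must be exploited factor by factor. One small remark: your closing observation about Rayleigh--Schr\"odinger producing gaps $(\lambda_j - \lambda_l)^{-1}$ away from $\lambda_i$ is a fair motivation for the contour method, but the expansion the paper actually uses only ever produces gaps emanating from $\lambda_i$, so this is not a defect of the paper's route --- merely of the naive perturbation series.
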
 

Next we state a level repulsion estimate, which will be established in \Cref{s:vectorcompare} below. 

\bel 

\label{l:lrh} 

There exist constants $c, \upsilon>0$ such that, for any fixed index $i \in \big[ (1/2- c) N, (1/2 + c) N \big]$ and real number $\gamma \in [0, 1]$, we have that 
\beq
   \P \big(Q_i(\bH^\gamma) \ge N^{\upsilon} \big) \le 2 N^{-\upsilon/4}.
\eeq
\eel

Given these statements, we now prove \Cref{l:maincomparison}. The argument follows \cite[Theorem 1.1]{bourgade2017eigenvector}.

\begin{proof}[Proof of \Cref{l:maincomparison}.]
	
	For brevity we consider just $n=1$; the general case is no harder. 
	
	By \Cref{l:lrh}, there exists some $\omega > 0$ such that, for each $\gamma \in \{ 0, 1 \}$,
	\beq 
	\label{qiomega}
	\P \big(Q_i(\bH^\gamma) \ge N^{\omega} \big) \le 2 N^{-\omega/4}.
	\eeq
	
	Denote the degree of $P$ by $m$, so that $P(x) \le C( x^m + 1)$ for $x\ge 0$. Delocalization for $\bH^\gamma$, \eqref{estimateuihgamma}, implies that $N \big \langle \q, \u_i (\textbf{H}^{\gamma}) \big\rangle^2 \le N^\delta$ with overwhelming probability for each $\delta >0$ and $\gamma\in[0,1]$, if $N$ is sufficiently large. Therefore, 
	\beq 
	\label{pmdelta}
	\E \bigg[ P \Big(N \big\langle \q, \u_i(\gamma)\big\rangle^2 \Big)^2 \bigg] \le C N^{2m\delta}.
	\eeq
	
	\noindent Now set $M = N^{2 \omega}$, and let $g=g_M$ be a smooth function with uniformly bounded derivatives such that $0 \le g(x) \le 1$ for each $x \in \mathbb{R}_{> 0}$; $g(x)=1$ for $x\le M$; and $g(x) = 0$ for $x \ge 2M$. Then,
	\begin{align}
	 \Bigg| \E & \bigg[  P \Big( N \big\langle \q, \u_{i} (\textbf{H}^1) \big\rangle^2 \Big)  \bigg]  - \E \bigg[  P \Big( N\big\langle \q, \u_i (\textbf{H}^0) \big\rangle^2 \Big) \bigg]    \Bigg|\\
	 & \le \Bigg| \E \bigg[  P \Big( N \big\langle \q, \u_{i} (\textbf{H}^1) \big\rangle^2 \Big) g \big(Q_i(\bH^1) \big)  \bigg]  - \E \bigg[  P \Big( N \big\langle \q, \u_{i}(\textbf{H}^0) \big\rangle^2 \Big) g \big(Q_i(\bH^0) \big)  \bigg]    \Bigg| \\
	& \quad +  \E \bigg[  P \Big( N \big\langle \q, \u_{i}(\textbf{H}^1) \big\rangle^2 \Big)^2  \bigg]^{1/2} \P \big( Q_i(\bH^1)  \ge M \big)  +  \E \bigg[  P \Big( N \big\langle \q, \u_{i}(\textbf{H}^0) \big\rangle^2 \Big)^2  \bigg]^{1/2} \P\big( Q_i(\bH^0)  \ge M \big)  \\
	\label{nquiestimate}
	&  \le \Bigg| \E \bigg[  P \Big( N \big\langle \q, \u_{i}(\textbf{H}^1) \big\rangle^2 \Big) g \big(Q_i(\bH^1) \big)  \bigg]  - \E \bigg[  P \Big( N \big\langle \q, \u_{i}(\textbf{H}^0) \big\rangle^2 \Big) g \big(Q_i(\bH^0) \big)    \bigg]  \Bigg| +  CN^{-\omega/4 + m\delta}, 
	\end{align}
	
	\noindent where in the last estimate we applied \eqref{qiomega} and \eqref{pmdelta}. 
	
	Now let us define the function $h: \text{Mat}_{N \times N} \rightarrow \mathbb{R}$ by setting
	\beq 
	h(\mathbf A) = h_i (\textbf{A}) = P \Big( N \big\langle \q, \u_{i}(\mathbf A) \big\rangle^2 \Big) g \big(Q_i(\mathbf A)\big)
	\eeq 
	
	\noindent for any $\textbf{A} \in \text{Mat}_{N \times N}$. By \Cref{l:smoothev}, \Cref{deterministicqestimate}; a union bound over $1 \le i, a, b \le N$ and $\gamma$ and $\kappa$ in an $N^{-30}$-net of $[0, 1]$; and the fact that $h (\textbf{A}) = 0$ if $Q_i (\textbf{A}) \ge 2M$, we have that $h$ deterministically satisfies 
	\begin{flalign}
	\displaystyle\sup_{0 \le k \le 4} \displaystyle\sup_{\gamma \in [0, 1]} \displaystyle\max_{1 \le a, b \le N} \displaystyle\sup_{\kappa \in [0, 1]} \Big| \partial^{(k)}_{ab} h \big( \Theta_{\kappa}^{(a, b)} \bH^\gamma \big) \Big| \le C N^{15 + 15 m \omega},
	\end{flalign}
	
	\noindent and with overwhelming probability satisfies 
	\beq
	\displaystyle\sup_{0 \le k \le 4} \displaystyle\sup_{\gamma \in [0, 1]} \displaystyle\max_{1 \le a, b \le N} \displaystyle\sup_{\kappa \in [0, 1]} \Big| \partial^{(k)}_{ab} h \big( \Theta_{\kappa}^{(a, b)} \bH^\gamma \big) \Big| \le C N^{20 m (\omega + \delta)}.
	\eeq
	
	\noindent Therefore, upon setting $\omega$ and $\delta$ sufficiently small, \Cref{gcompare} implies $\big| \mathbb{E} \big[ h (\textbf{H}^1) \big] - \mathbb{E} \big[ h(\textbf{H}^0) \big] \big| < CN^{-c}$. Inserting this into \eqref{nquiestimate} yields
	\begin{flalign}
	\Bigg| \E & \bigg[  P \Big( N \big\langle \q, \u_{i}(1) \big\rangle^2 \Big)  \bigg]  - \E \bigg[  P \Big( N\big\langle \q, \u_{i}(0) \big\rangle^2 \Big) \bigg]    \Bigg| \le C N^{-c} +  CN^{-\omega/4 + m\delta}.
	\end{flalign}
	
	\noindent The lemma follows from further imposing that $5 m \delta < \omega$. 
\end{proof}

\subsection{Proof of \Cref{gcompare}}

\label{ProofCompareFunction}

In this section we establish \Cref{gcompare}. 

\begin{proof}[Proof of \Cref{gcompare}]
	
	Observe (by \eqref{ab}, for instance) that
	\beq
	\label{sumf}
	\partial_\gamma \E \big[ F (\bH^\gamma) \big] = \sum_{1 \le i, j \le N} \E \Bigg[  \partial_{ij} F (\bH^\gamma) \bigg( A_{ij}  - \frac{\gamma t^{1/2}}{( 1- \gamma^2)^{1/2} } w_{ij}  \bigg)  \Bigg].
	\eeq
	
	Now, we condition on the label $\Psi$ of $\bH$ (recall \Cref{hsumabc}) and denote the associated conditional expectation by $\E_{\Psi}$. We first consider the case $\psi_{ij} =1$. This implies $A_{ij} = B_{ij} =0$, and Gaussian integration by parts (see for instance \cite[Appendix A.4]{talagrand2010mean}) yields
	\beq
	\label{psiij1} 
	\E_{\Psi} \Bigg[  \partial_{ij} F (\bH^\gamma) \bigg( \frac{\gamma t^{1/2}}{( 1- \gamma^2)^{1/2} } w_{ij}  \bigg)  \Bigg] = \frac{t\gamma}{N}  \E_{\Psi} \left[  \partial_{ij}^2 F (\bH^\gamma)   \right], \quad \text{whenever $\psi_{ij} = 1$.}
	\eeq
	Hoeffding's inequality applied to the Bernoulli random variable $\psi_{ij}$, whose distribution was defined in \eqref{e:phidef}, implies that there are likely at most $C N^{1 + \alpha \rho}$ pairs $(i, j)$ such that $\psi_{ij} = 1$. Specifically, 
	\beq\label{e:hoeffding} 
	\mathbb{P} \bigg[ \Big| \big\{ (i, j) \in [1, N] \times [1, N]: \psi_{ij} = 1  \big\} \Big| < C N^{1 + \alpha\rho } \bigg] \ge 1 - C \exp \big( - N^{\alpha \rho} \big).
	\eeq 
	
	By \eqref{definitionl}, the contribution of \eqref{psiij1} over the complement of the event described in \eqref{definitionk} or \eqref{e:hoeffding} is bounded by $C L N^{-D}$, for some constant $C = C(D) > 0$. This, together with \eqref{definitionk}, \eqref{psiij1}, \eqref{e:hoeffding}, \eqref{c1c2t0}, and \eqref{e:parameters} imply that the sum of \eqref{psiij1} over all $(i, j)$ such that $\psi_{ij} = 1$ or $i = j$ is at most
	\beq\label{e:largefield}
	C K t N^{-1} N^{\alpha \rho + 1} + C L N^{-D} \le C K N^{\alpha \rho - (2 - \alpha) \nu} + C L N^{-D} < K N^{-c} + C L N^{-D},
	\eeq
	
	\noindent for some constants $c > 0$ (only dependent on the fixed parameters $\alpha$, $\rho$, and $\nu$) and $C = C (D) > 0$. 
	
	We next consider the case when $\psi_{ij} =0$ and $i \ne j$. Then, $A_{ij} = a_{ij}( 1- \chi_{ij})$ and $B_{ij} = b_{ij} \chi_{ij}$; abbreviate $a_{ij} =a$, $b_{ij} = b$, $\chi_{ij} = \chi$, and $w_{ij} = w$. Set
	\beq
	\label{hij}
	h = \gamma ( 1- \chi) a + \chi b + ( 1 -\gamma^2)^{1/2} t^{1/2} w.
	\eeq
	Fix $(i, j) \in [1, N]^2$ such that $\psi_{ij} = 0$, abbreviate $F^{(k)} = \partial_{ij}^{(k)} F$, and abbreviate $\textbf{S} = \Theta_0^{(i, j)} \textbf{H}$. Then a Taylor expansion yields
\beq
F' (\bH^\gamma) = F'(\S) + h F''(\S) + \frac{1}{2!} h^2 F^{(3)}(\S) + \frac{1}{3!} h^3 F^{(4)} \big( \Theta_{\kappa}^{(i, j)} \textbf{H}^{\gamma} \big),
\eeq
	for some $\kappa \in [0, 1]$. Hence, the $(i, j)$ term in the sum on the right side of \eqref{sumf} is equal to
	\beq
	\label{1234faw}
	\E_{\Psi} \Bigg[   \bigg( (1 - \chi) a - \frac{\gamma t^{1/2}}{( 1- \gamma^2)^{1/2} } w  \bigg) \Big( F'(\S) + h F''(\S) + \frac{1}{2!}h^2 F^{(3)}(\S) 
+ \frac{1}{3!} h^3 F^{(4)} \big( \Theta_{\kappa}^{(i, j)} \textbf{H}^{\gamma} \big) \Big) \Bigg].\eeq
	Using the mutual independence between $\S$, $a$, $b$, $\chi$, and $w$, and the fact that $a$, $b$, and $w$ are all symmetric, we conclude that \eqref{1234faw} is equal to
	\beq
	\label{1234faw2} 
	\E_{\Psi} \Bigg[   \bigg( (1 - \chi) a  - \frac{\gamma t^{1/2}}{( 1- \gamma^2)^{1/2} } w  \bigg) h F''(\S) \Bigg] +  \frac{1}{3!} \E_{\Psi} \Bigg[   \bigg( (1 - \chi) a  - \frac{\gamma t^{1/2}}{( 1- \gamma^2)^{1/2} } w  \bigg) h^3 F^{(4)}\big( \Theta_{\kappa}^{(i, j)} \textbf{H}^{\gamma} \big) \Bigg].
	\eeq
	Again using the mutual independence between $\S$, $a$, $b$, $\chi$, and $w$; the fact that $a$, $b$, and $w$ are all symmetric; and \eqref{hij}, we find that the first term in \eqref{1234faw2} is 
	\beq
	\E_{\Psi} \Bigg[  F''(\S) \bigg( (1 - \chi) a  - \frac{\gamma t^{1/2}}{( 1- \gamma^2)^{1/2} } w  \bigg)  h   \Bigg] = \gamma \E \big[ F''(\S) \big] \E \big[a^2 ( 1- \chi) - tw^2 \big] = 0,\eeq
	where the final equality follows from the choice of $t$ in \eqref{t}.
	
	The second term in \eqref{1234faw2} is bounded above by 
	\beq\label{e:fourthsecond}
	C \E_{\Psi} \bigg[ \Big| F^{(4)} \big( \Theta_{\kappa}^{(i, j)} \textbf{H}^{\gamma} \big)\Big|  \big( ( 1- \chi) a^4  + t^2 w^4  + \chi tw^2 b^2  \big) \bigg].
	\eeq
	
	\noindent On the complement of the event in \eqref{definitionk}, this expectation is bounded by $C L N^{-D - 2}$, for some constant $C = C(D) > 0$. On this event, we use \eqref{c1c2t0}; the facts that $\mathbb{E}[w^2] \le N^{-1}$ and $\mathbb{E}[w^4] \le N^{-2}$; and the estimates (which can be quickly deduced from \Cref{abcpsidefinitions}; see \cite[Lemma 4.2]{aggarwal2018goe} for details) 
	 \beq\label{530} \E_{\Psi} \big[ (1-\chi) a^4 \big] \le C N^{\nu(\alpha -4) - 1}, \quad \mathbb{E}_{\Psi} \big[\chi b^2 \big] \le C N^{\rho(\alpha -2) -1}, \eeq 
		
	\noindent to bound it by
	\beq
	C K \big( N^{\nu(\alpha -4) - 1 } + N^{2\nu(\alpha -2) - 2} + N^{(\rho + \nu)(\alpha -2) - 2} \big) \le C K N^{-2 - c},
	\eeq
	
	\noindent for some constant $c > 0$ (only dependent on the fixed parameters $\alpha$, $\nu$, and $\rho$), where we have used \eqref{e:parameters} in the last inequality. So, the sum of \eqref{e:fourthsecond} over all $(i, j) \in [1, N]^2$ such that $i \ne j$ and $\psi_{ij} = 0$ is at most 
	\begin{flalign}
	\label{estimate4}
	K N^{-c} + C L N^{-D}. 
	\end{flalign}
	
	Now the lemma follows from the fact that the contribution to \eqref{sumf} from all terms corresponding to $(i, j)$ with $i = j$ or $\psi_{ij} = 1$ is bounded by \eqref{e:largefield} and the fact that the contribution of all terms coming from $(i, j)$ with $i \ne j$ and $\psi_{ij} = 0$ is bounded by \eqref{estimate4}.
\end{proof}

\subsection{Proof of \Cref{l:smoothev} and \Cref{deterministicqestimate}}

\label{s:lrh}

In this section we prove \Cref{l:smoothev} and \Cref{deterministicqestimate}. We begin with the following estimate on the resolvent entries of $\Theta_{\kappa}^{(a, b)} \textbf{H}^{\gamma}$. In the below, we recall $\mathcal{D}_{C; \delta}$ from \eqref{d1}.

\bel\label{l:perturbS}

There exists a constant $K > 0$ such that following holds. For any $\delta > 0$, the bound 
\beq\label{e:Gsbounds}
\displaystyle\sup_{0 \le \kappa \le 1} \displaystyle\max_{1 \le a, b \le N} \sup_{0 \le \gamma \le 1} \max_{1 \le i,j \le N} \sup_{z \in \mathcal D_{K; \delta}} \bigg| \Big( \big( \Theta_{\kappa}^{(a, b)} \textbf{\emph{H}}^{\gamma} -z \big)^{-1}  \Big)_{ij} \bigg| < N^\delta  
\eeq

\noindent holds with overwhelming probability. Moreover, for each $c_1 > 0$, there exists some $c_2 > 0$ such that
\beq
\label{uihgammaestimate}
\begin{aligned}
& \displaystyle\sup_{0 \le \kappa \le 1} \displaystyle\max_{1 \le a, b \le N} \sup_{0 \le \gamma \le 1} \Big\| \textbf{\emph{u}}_i \big( \Theta_{\kappa}^{(a, b)} \textbf{\emph{H}}^{\gamma} \big) \Big\|_{\infty}   < N^{\delta - 1/2}; \\
& \displaystyle\sup_{0 \le \kappa \le 1} \displaystyle\max_{1 \le a, b \le N} \sup_{0 \le \gamma \le 1} \Big| \lambda_i \big( \Theta_{\kappa}^{(a, b)} \textbf{\emph{H}}^{\gamma} \big) \Big| < c_1,
\end{aligned}	
\eeq

\noindent both hold for each $(1/2 - c_2) N \le i \le (1/2 + c_2)N$ with overwhelming probability. Additionally, for any interval $I \subset [-c_1, c_1]$ of length $| I |  \ge N^{-1 + \delta}$,
\beq 
\label{intervalhgammaestimate}
\displaystyle\sup_{0 \le \kappa \le 1} \displaystyle\max_{1 \le a, b \le N} \displaystyle\sup_{0 \le \gamma \le 1} \bigg| \Big\{ i \in [1, N] \colon \lambda_i \big( \Theta_{\kappa}^{(a, b)} \mathbf H^\gamma \big) \in I \Big\} \bigg| \le C |I| N^{1+\delta},
\eeq

\noindent holds with overwhelming probability.

\eel 

\begin{proof} 
	
	The second bound in \eqref{uihgammaestimate} follows from \Cref{l:hgammarigid} and the Weyl interlacing inequality for eigenvalues of symmetric matrices. Furthermore, the proofs of the first bound in \eqref{uihgammaestimate} and \eqref{intervalhgammaestimate} given \eqref{e:Gsbounds} follow from standard arguments (see for example the proofs of \cite[Theorem 2.10]{benaych2016lectures} and \cite[Lemma 7.4]{landon2017convergence}). So, we only establish \eqref{e:Gsbounds}.
	
	To that end, let $K$ be as in \Cref{l:gammagreen}, and fix indices $a, b \in [1, N]$; real numbers $\kappa, \gamma \in [0, 1]$; and a complex number $z \in \mathcal{D}_{K; \delta}$. Set $\textbf{E} = \Theta_{\kappa}^{(a, b)} \textbf{H}^{\gamma}$, $\textbf{V} = (\textbf{E} - z)^{-1} = \{ V_{ij} \}$, and $\Delta = \textbf{H}^{\gamma} - \textbf{E}$. We may assume throughout this proof that $|h_{ab}| \le N^{-\rho}$, for otherwise $\textbf{E} = \textbf{H}^{\gamma}$, and the result follows from \Cref{l:gammagreen}. 
	
	For any $M \in \N$, the resolvent identity \eqref{ab} gives
\beq
\label{e:resolventexpand}
\textbf{V} - \mathbf G^\gamma = \sum_{k=0}^M (\mathbf G^\gamma   \Delta)^k \mathbf G^\gamma + (\mathbf G^\gamma  \Delta)^{M+1} \textbf{V}.
\eeq

Now select $M$ in \eqref{e:resolventexpand} such that $M \rho > 10$. Then \eqref{ab}; \Cref{l:gammagreen}; the deterministic bound \eqref{kij}; and the fact that $\Delta$ is supported on at most two entries, each of which is bounded by $N^{-\rho}$, implies for sufficiently small $\delta > 0$ that 
\begin{flalign}
\label{vestimate} 
\displaystyle\max_{1 \le i, j \le N} |V_{ij}| \le \displaystyle\max_{1 \le i, j \le N}  \big| G_{ij}^{\gamma} \big| + \displaystyle\sum_{k = 0}^M 2^k N^{(k + 1) \delta - k \rho} + 2^{M + 1} N^{(M + 1) \delta - \rho - 10} \eta^{-1} \le N^{\delta},
\end{flalign}

\noindent for sufficiently large $N$, with overwhelming probability. Taking a union bound of \eqref{vestimate} over all $a, b \in [1, N]$; $\kappa$ and $\gamma$ in a $N^{-10}$-net of $[0, 1]$; and $z$ in a $N^{-10}$-net of $\mathcal{D}$, and also applying \eqref{ab}, then yields \eqref{e:Gsbounds}. 
\end{proof}

Next we require the following result that essentially provides level repulsion estimates for $\X_t$. 

\bel\label{l:dyadic}

For all $\omega > 0$, there exist constants $c > 0$ (independent of $\omega$) and $C = C(\omega) > 0$ such that the following holds. Set $M = N^{2 \omega}$; recall $t$ from \eqref{t}; and fix an index $i \in \big[ (1/2 - c) N, (1/2 + c)N \big]$. Then, 
\beq
\label{fmqixsestimate}
\E \bigg[  f_M \big( Q_i(\X_t) \big) \Big] \le C N^{3\omega/2}.
\eeq

\noindent Further fix $\delta > 0$ and, for fixed real numbers $\kappa, \gamma \in [0, 1]$ and indices $1 \le a, b \le N$, abbreviate $\mu_j = \lambda_j \big( \Theta_{\kappa}^{(a, b)} \textbf{\emph{H}}^{\gamma} \big)$ for each $j \in [1, N]$. Then we have with overwhelming probability that
\begin{flalign}
\label{sumlambdailambdaj}
\one_{Q_i (\Theta_{\kappa}^{(a, b)} \textbf{\emph{H}}^{\gamma}) < M} \displaystyle\sum_{j \ne i} \displaystyle\frac{1}{|\mu_j - \mu_i|} \le N^{1 + \omega + \delta}.
\end{flalign} 

\eel
\begin{proof}

	Throughout this proof, we may assume that $\delta < \frac{\omega}{4}$. Define the sets
	\beq
	U_0 = \Big\{ j \in [1, N] \setminus \{ i \} \colon \big| \lambda_j(t) - \lambda_i(t) \big| \le N^{-1 + \delta/2} \Big\}.
	\eeq
	
	\noindent and
	\beq
	U_n = \Big\{ j \in [1, N] \colon 2^{n-1} N^{-1 + \delta/2} < \big| \lambda_j(t) - \lambda_i(t) \big| \le 2^n N^{-1 + \delta/2} \Big\}.
	\eeq
	
	\noindent for each integer $n \ge 1$. 
	
	Now choose the $c > 0$ here with respect to the $K$ from \Cref{l:disperse} to satisfy $c < \frac{1}{4K}$, and define $L = \big\lfloor \log_2 (2c N^{1 - \delta / 2}) \big\rfloor$. Then \Cref{l:disperse} and \Cref{l:hgammarigid} together imply (after further decreasing $c$ if necessary) that 
	\beq
	\label{e:unbound}
	|U_n| \le  C 2^n N^{\delta}.
	\eeq 
	
	\noindent holds with overwhelming probability, for each $n \in [0, L]$. Next, for any $\theta \in (0, 1)$, also define the event 
	\begin{flalign} 
	E (\theta) = \bigg\{ \displaystyle\min \big\{ \lambda_i (t) - \lambda_{i - 1} (t), \lambda_{i + 1} (t) - \lambda_i (t) \big\} > \displaystyle\frac{\theta}{N} \bigg\},
	\end{flalign}  
	
	\noindent and let $E(\theta)^c$ denote the complement of $E (\theta)$. Then \eqref{e:unbound} implies with overwhelming probability that
	\beq\label{e:dispersion1}
	\frac{\one_{E (\theta)}}{N^2} \sum_{n=0}^L \sum_{j \in U_n} \big|\lambda_j(t) - \lambda_i(t) \big|^{-2} \le  C N^{\delta }\theta^{-2}.
	\eeq
	Further, we deterministically have that
	\beq\label{e:dispersion2}
	\frac{1}{N^2} \displaystyle\sum_{n = L + 1}^{\infty} \sum_{j \in U_n} \big| \lambda_j(t) - \lambda_i(t) \big|^{-2} \le C N^{-1}.
	\eeq
	
	\noindent Then combining \eqref{e:dispersion1} and \eqref{e:dispersion2} bounds 
	\begin{flalign} 
	\label{ethetaqixs}
	\mathbb{E} \Big[ Q_i \big( \Theta_{\kappa}^{(a, b)} \textbf{X}_t \big) \one_{E (\theta)}\Big] < C N^{\delta} \theta^{-2}.
	\end{flalign} 
	
	\noindent On $E(\theta)^c$, we use the third part of \Cref{l:xtlevel} with $\eps = \theta$ and $r= 1/3$, and the fact that $\big| f_M (x) \big| < M$ holds for all $x > 0$, to deduce that 
	\begin{flalign} 
	\label{ethetaqixs2}
	\mathbb{E} \bigg[ f_M \Big( Q_i \big(  \Theta_{\kappa}^{(a, b)} \textbf{X}_t \big) \Big) \one_{E (\theta)^c} \bigg] < C N^{2 \omega + \delta} \theta^{5/3}. 
	\end{flalign} 	
	
	\noindent Then selecting $\theta = N^{- \omega / 2}$, using the fact that $\delta < \frac{\omega}{4}$, and combining \eqref{ethetaqixs} and \eqref{ethetaqixs2} yields \eqref{fmqixsestimate}. We omit the proof of \eqref{sumlambdailambdaj}, as it is entirely analogous, and follows from replacing the above application of \Cref{l:disperse} and \Cref{l:hgammarigid} (to establish \eqref{e:unbound}) with \eqref{intervalhgammaestimate} and the second bound in \eqref{uihgammaestimate}, respectively, and using the fact that $\mu_i - \mu_{i - 1}, \mu_{i + 1} - \mu_i \ge N^{-\omega}$ holds on the event that $Q_i \big( \Theta_{\kappa}^{(a, b)} \textbf{H}^{\gamma} \big) < M$.
\end{proof}

Now we can establish the derivative bounds given by \Cref{l:smoothev} and \Cref{deterministicqestimate}.  

\begin{proof}[Proof of \Cref{l:smoothev} (Outline)]
In outline, the bound \eqref{e:qibound} is proven by expanding $\partial^{(k)}_{ab} Q_i(\bH^\gamma)$ using contour integration into a sum of terms which are then bounded individually. Since the proof of \eqref{derivativeuihgammaestimate} uses a similar expansion, we only discuss that of \eqref{e:qibound} here (for the former, see the proof of \cite[Proposition 4.2]{bourgade2017eigenvector} for further details).

Our claim \eqref{e:qibound} is essentially the same as that of \cite[Proposition 4.6]{huang2015bulk}, but there are two differences. First, one of our hypotheses is weaker: we only have complete delocalization at small energies and not throughout the spectrum. Second, our conclusion is stronger: \cite[Proposition 4.6]{huang2015bulk} bounded derivatives up to third order, but here we bound fourth order derivatives. This extension to fourth order derivatives parallels the proof for the third order derivatives and requires no new ideas. Therefore, let us only show how the proof in \cite[Proposition 4.6]{huang2015bulk} may be modified to accommodate the fact that our delocalization estimate is weaker than the one used in that reference. In what follows, we also assume for notational convenience that $\kappa = 1$, so that $\Theta_{\kappa}^{(a, b)} (\textbf{H}^{\gamma}) = \textbf{H}^{\gamma}$, as the proof for general $\kappa \in [0, 1]$ is entirely analogous by replacing our use of \eqref{estimateuihgamma} below by \Cref{l:perturbS}. 

For any vector $\mathbf v$, let $\mathbf v^*$ denote its transpose. Set $\theta_{jk} = \u_j^* {\mathbf V} \u_k$, where ${\mathbf V}= {\mathbf V}^{(a, b)} = \{ V_{ij} \}$ is the $N \times N$ matrix whose entries are zero except for $V_{ab} = V_{ba} =1$. In the proof of \cite[Proposition 4.6]{huang2015bulk}, $\partial^{(k)}_{ab} Q_i(\bH^\gamma)$ was expanded into a sum of certain terms using a contour integral representation and resolvent identities. For instance, in the expansion of $\partial^{(3)}_{ab} Q_i(\bH^\gamma)$ there are $13$ distinct terms, which are listed after line (4.18) in \cite[Proposition 4.6]{huang2015bulk}. Setting $\lambda_i = \lambda_i( \mathbf H^\gamma)$, one such term is
\beq\label{e:qsum}
\frac{1}{N^2} \sum_{\substack{1 \le j_1, j_2, j_3 \le N \\ j_1, j_2, j_3 \neq i}} \frac{\theta_{j_1 j_2} \theta_{j_2 j_3} \theta_{j_3 j_1} } { (\lambda_i - \lambda_{j_1})^3 (\lambda_i  - \lambda_{j_2}) (\lambda_i - \lambda_{j_3})}.
\eeq
The terms produced by expanding $\partial^{(k)}_{ab} Q_i(\bH^\gamma)$ are fractions with a product of $k$ $\theta_{\alpha\beta}$ terms in the numerator, where each of $\alpha, \beta$ may be a summation index or $i$, and a product of $k+2$ eigenvalue differences $\lambda_i - \lambda_j$ in the denominator, where $j$ is a summation index. We call $k$ the \emph{order} of such a term. The proof of \cite[Proposition 4.6]{huang2015bulk} shows that to prove the claim \eqref{e:qibound}, it suffices to bound each of the order $k$ terms appearing in its expansion by $C N^{(2k + 2)\delta + (k + 2)\omega}$. 

For illustrative purposes, we consider just the term \eqref{e:qsum} in the $k=3$ case here; other terms of the same order and the cases $k \in \{ 1, 2, 4 \}$ are analogous. So, let us show that
\beq\label{e:qsum2}
\left| \frac{1}{N^2} \sum_{\substack{1 \le j_1, j_2, j_3 \le N \\ j_1, j_2, j_3 \neq i}} \frac{\theta_{j_1 j_2} \theta_{j_2 j_3} \theta_{j_3 j_1} } { (\lambda_i - \lambda_{j_1})^3 (\lambda_i  - \lambda_{j_2}) (\lambda_i - \lambda_{j_3})} \right| \le C N^{8 \delta + 5\omega} .
\eeq

\noindent To prove \eqref{e:qsum2}, we consider various cases depending on the locations of the eigenvalues  $\lambda_{j_1}, \lambda_{j_2},  \lambda_{j_3}$. Let $K$ be the constant from \Cref{l:gammagreen}, and set $c = (2K)^{-1}$. When $\lambda_{j_1}, \lambda_{j_2},  \lambda_{j_3} \in [- 2c , 2c ]$,  \eqref{estimateuihgamma} shows the corresponding eigenvectors are completely delocalized, and the proof of \cite[Proposition 4.6]{huang2015bulk} requires no modification. There are three remaining cases: exactly one of the $j_{\ell}$ is such that $|\lambda_{j_\ell}| > 2c$, exactly two are, or all three are. 

In the first case, suppose for example that $|\lambda_{j_2}| > 2c$. Then, \eqref{estimateuihgamma} implies
 \begin{align}
| \theta_{j_1 j_2} | &\le \Big( \big| u_{j_1}(a) \big| \big| u_{j_2}(b) \big|  + \big|u_{j_1}(b) \big| \big| u_{j_2}(a) \big| \Big) \le  N^{-1/2 + \delta} \Big( \big| u_{j_2}(a) \big| +  \big| u_{j_2}(b) \big| \Big) ,\\
| \theta_{j_2 j_3} | &\le \Big( \big|u_{j_2}(a)\big| \big| u_{j_3}(b) \big|  + \big| u_{j_2}(b) \big| \big| u_{j_3}(a) \big| \Big) \le  N^{-1/2 + \delta} \Big( \big| u_{j_2}(a) \big| +  \big| u_{j_2}(b) \big| \Big), \\
|\theta_{j_3 j_1} | & \le \Big( \big|u_{j_1}(a) \big| \big| u_{j_3}(b) \big|  + \big| u_{j_1}(b) \big| \big| u_{j_3}(a) \big| \Big) \le N^{-1 + \delta},
\end{align}
with overwhelming probability. Inserting these bounds in \eqref{e:qsum} decouples the sum into a product of a sum over $j_2$ and a sum over $j_1, j_3$:
\beq\label{e:247}
 \eqref{e:qsum}  \le c^{-1} N^{-4 + 3\delta} \Bigg(\sum_{j_2 = 1}^N \Big( \big| u_{j_2}(a) \big| +  \big| u_{j_2}(b) \big| \Big)^2 \Bigg)  \Bigg( \sum_{\substack{1 \le j_1, j_3 \le N}} \frac{1}{|\lambda_i - \lambda_{j_1}|^3  |\lambda_i - \lambda_{j_3}|} \Bigg).
\eeq
Here we used $|\lambda_i - \lambda_{j_2}| > c$. The first factor is at most a constant, since the matrix of eigenvectors is orthonormal: 
 \begin{equation}
 \label{uj2ab}
  \sum_{j_2 = 1}^N \Big( \big| u_{j_2}(a) \big| + \big| u_{j_2}(b) \big| \Big)^2 \le  2 \sum_{j_2 = 1}^N \Big( \big| u_{j_2}(a) \big|^2  +  \big| u_{j_2}(b) \big|^2 \Big) = 4.
 \end{equation}
 For the second factor, the assumption \eqref{e:qiassume} implies that $|\lambda_i - \lambda_{i \pm 1} | \ge N^{-1 - \omega}$, and so \eqref{sumlambdailambdaj} yields
 \beq
 \sum_{j\neq i} |\lambda_j - \lambda_i|^{-1}  \le C N^{1 + \omega + \delta}.
 \eeq
 Further, for $k\ge 2$, the hypothesis \eqref{e:qiassume} yields
 \beq
 \label{lambdaijk}
 \sum_{j\neq i} |\lambda_j - \lambda_i|^{-k}  \le \Bigg(  \sum_{j\neq i} |\lambda_j - \lambda_i|^{-2} \Bigg)^{k/2} \le C^{k / 2} N^{k(1 + \omega + \delta)}.
 \eeq
These inequalities imply the second factor of \eqref{e:247} is at most $C N^{4 + 4\omega + 4 \delta}$, and so \eqref{e:qsum} is at most $C N^{7 \delta + 4 \omega}$. 

In the second case, suppose for example that $|\lambda_{j_2}| > c$ and $|\lambda_{j_3}| > c$. Since then $|\lambda_i - \lambda_{j_2}|$ and $|\lambda_i - \lambda_{j_3}|$ are bounded below by $c$, in this case \eqref{e:qsum} is bounded above by
\beq
\frac{1}{c^2 N^2} \sum_{\substack{ 1 \le j_1, j_2, j_3 \le N \\ j_1, j_2, j_3 \neq i}} \left| \frac{\theta_{j_1 j_2} \theta_{j_2 j_3} \theta_{j_3 j_1} } { (\lambda_i - \lambda_{j_1})^3} \right|.
\eeq

\noindent Proceeding as in the previous case, we find that \eqref{e:qsum} is bounded above by 
\begin{flalign}
c^{-2} N^{2 \delta - 3} \Bigg(\sum_{1 \le j_2, j_3 \le N} & \Big( \big|u_{j_2}(a) \big| +  \big| u_{j_2}(b) \big| \Big) \Big( \big|u_{j_3}(a) \big| +  \big| u_{j_3}(b) \big| \Big) \\
& \quad \times \Big( \big|u_{j_3}(a) \big| \big|u_{j_2}(b) \big|  + \big| u_{j_3}(b) \big| \big| u_{j_2}(a) \big| \Big) \Bigg) \Bigg( \sum_{j_1 \ne i} \frac{1  } { |\lambda_i - \lambda_{j_1}|^3 }  \Bigg).
\end{flalign}

\noindent By \eqref{lambdaijk}, the sum over $j_1$ is at most $C N^{3 + 3 \omega + 3 \delta}$. Moreover, by the orthogonality of the eigenvectors of $\textbf{H}^{\gamma}$ (following \eqref{uj2ab}), the sum over $j_2$ and $j_3$ is bounded by $8$. Thus, \eqref{e:qsum} is bounded above by $C N^{5 \delta + 3 \omega}$. 

Finally, when $|\lambda_{j_\ell}| > c$ for all $\ell$, \eqref{e:qsum} is bounded by 
\begin{flalign}
\frac{1}{N^2 c^5} \sum_{1 \le j_1, j_2, j_3 \le N} \left| \theta_{j_1 j_2} \theta_{j_2 j_3} \theta_{j_3 j_1}\right| \le \frac{1}{N^2 c^5} & \sum_{1 \le j_1, j_2, j_3 \le N} \Big( \big|u_{j_1}(a) \big| \big| u_{j_2}(b) \big|  + \big| u_{j_1}(b) \big| \big| u_{j_2}(a) \big| \Big)  \\
& \qquad \qquad \times \Big( \big|u_{j_2}(a) \big| \big| u_{j_3}(b) \big|  + \big| u_{j_2}(b) \big| \big| u_{j_3}(a) \big| \Big) \\
& \qquad \qquad \times \Big( \big|u_{j_1}(a) \big| \big| u_{j_3}(b) \big|  + \big| u_{j_1}(b) \big| \big| u_{j_3}(a) \big| \Big).
	\end{flalign}
	
	\noindent Again by the orthogonality of the eigenvectors of $\textbf{H}^{\gamma}$ (following \eqref{uj2ab}), the latter is bounded by $8$, and so \eqref{e:qsum} is bounded above by obtain $8 c^{-5} N^{-2}$.
	
	This completes our demonstration of how to bound the sum \eqref{e:qsum} and concludes the proof.
\end{proof}

\begin{proof}[Proof of \Cref{deterministicqestimate} (Outline)]
	
	We again only discuss the first bound in \eqref{e:detbounds}, as the proof of the second is similar. To that end observe, since we have assumed $Q_i (\textbf{H}^{\gamma}) \le N^{2 \omega}$, we must have $|\lambda_i - \lambda_j| \ge N^{-1 - \omega}$, for each $j \in [1, N] \setminus \{ i \}$. As in the proof of \Cref{l:smoothev}, the derivative $\partial_{ab}^{(k)} \big( Q_i (\Theta_{\kappa}^{(a, b)} \textbf{H}^{\gamma}) \big)$ for $k \le 4$ can be expressed as a sum of a uniformly bounded number of terms similar to \eqref{e:qsum}, in which at most four indices $j_k$ are being summed over and in which the denominator is of degree at most six in the gaps $\lambda_i - \lambda_{j_k}$.  Thus, each such term is bounded by at most $N^{6 + 6 \omega}$, and so their sum is bounded by a multiple of $N^{10 + 6 \omega}$. This yields the first estimate in \eqref{e:detbounds} and, as mentioned previously, the proof of the second is omitted.  
\end{proof}

\subsection{Proof of \Cref{l:lrh}}

\label{s:vectorcompare}

Now we can establish \Cref{l:lrh}. 

\begin{proof}[Proof of \Cref{l:lrh}]

It suffices to show that there exists some $\omega > 0$ such that, if $M = N^{2 \omega}$, then  
\beq
\label{e:pgoal} 
\mathbb{E} \Big[ f_M \big( Q_i (\bH^\gamma) \big) \Big] \le 2 N^{3 \omega /2}, 
\eeq

\noindent since then a Markov inequality would imply
\beq\label{wouldimply}
 \P \big( Q_i(\bH^\gamma)  \ge N^{2\omega} \big) \le \P \Big( f_M \big(Q_i(\bH^\gamma) \big)  \ge N^{2 \omega} \Big) \le N^{-2 \omega} \E \Big[ f_M \big( Q_i(\bH^\gamma) \big) \Big] \le 2 N^{- \omega / 2},
\eeq
and the lemma follows after setting $\upsilon = 2 \omega$.
To prove \eqref{e:pgoal}, we apply \Cref{gcompare} to interpolate between $\X_t = \bH^0$ and $\bH^{\gamma}$, carrying the level repulsion estimate \eqref{fmqixsestimate} from the former to the latter. To implement this argument, let us take $c_1> 0$ sufficiently small so that $i \in \big[ (1/2- c_1) N, (1/2 + c_1) N \big]$ implies that $\big| \lambda_i (\Theta_{\kappa}^{(a, b)} \textbf{H}^{\gamma}) \big|$ is less than the $c$ from \Cref{l:smoothev} for each $\gamma, \kappa \in [0, 1]$ and $1 \le a, b \le N$ with overwhelming probability; such a $c_1$ exists by the second bound of \eqref{uihgammaestimate}. We take $c=c_1$ in the statement of \Cref{l:lrh}.
Further, we take $\omega, \delta > 0$ sufficiently small so that $100 (\omega + \delta)$ is less than the constant $c$ from \Cref{gcompare}, and set $M = N^{2 \omega}$.

Then, we may apply \Cref{gcompare}, with the $F (\textbf{A})$ there equal to $f_M \big( Q_i (\textbf{A}) \big)$ here; the $K$ there equal to $C N^{40 (\omega + \delta)}$ here; and the $L$ there equal to $C N^{15 + 7 \omega}$ here. Then \eqref{definitionk} and \eqref{definitionl} follow from \Cref{l:smoothev} and \Cref{deterministicqestimate}, respectively, and so \Cref{gcompare} implies that 
\begin{flalign*}
\mathbb{E} \Big[ f_M \big( Q_i (\textbf{H}^{\gamma}) \big) \Big] \le \mathbb{E} \Big[ f_M \big( Q_i (\textbf{X}_t) \big) \Big] + C N^{- c / 2} < 2 N^{3 \omega / 2}, 
\end{flalign*}

\noindent where we used \eqref{fmqixsestimate} to deduce the second inequality. This verifies \eqref{e:pgoal}. 
\end{proof}

\section{Dynamics}\label{s:dynamics}

This section determines the eigenvector statistics of $\X_t$. Our main goal is a proof of \Cref{t:dynamics}. In \Cref{s:emf} we recall the definition of the eigenvector moment flow from \cite{bourgade2013eigenvector} and some of its properties. \Cref{s:3aux} contains continuity estimates used in the proof of \Cref{t:dynamics}. In \Cref{s:relax} we use the eigenvector moment flow to establish \Cref{t:dynamics}, assuming several results that will be shown in \Cref{s:dichoproof} and \Cref{s:proofofcrefaux1}. 

\subsection{Eigenvector moment flow}\label{s:emf}

 Recall the matrix $\textbf{X}_s$ from \eqref{e:Xt} and that its eigenvalues are given by $\lambda_1 (s) \le \lambda_2 (s) \le \cdots \le \lambda_N (s)$ with associated unit eigenvectors $\textbf{u}_1 (s), \textbf{u}_2 (s), \ldots , \textbf{u}_N (s)$, respectively. By \cite[Theorem 2.3]{bourgade2013eigenvector}, these eigenvalues and eigenvectors are governed by two stochastic differential equations (SDEs):
\begin{align}
d\lambda_k(s) &= \frac{d b_{kk}(s)}{\sqrt{N}} + \frac{1}{N} \sum_{l \neq k} \frac{ds}{\lambda_k(s) - \lambda_l(s)},\\
d\u_k(s) &= \frac{1}{\sqrt{N}} \sum_{l\neq k} \frac{db_{kl}(s)}{\lambda_k(s) - \lambda_l(s)} \u_l(s) - \frac{1}{2N} \sum_{l\neq k} \frac{ds}{(\lambda_k - \lambda_l)^2} \u_k(s),
\end{align}
where $\big( b_{ij}(s) \big)_{1 \le i \le j \le N}$ are mutually independent Brownian motions with variance $1 + \one_{i = j}$. The first equation, for the eigenvalues, is called Dyson Brownian motion. The second, for the eigenvectors, is called the Dyson vector flow. Using these SDEs, we define the stochastic processes $\bm{\lambda} = \big( \bm{\lambda} (s) \big)_{0 \le s \le 1}$ and $\u = \big( \u (s) \big)_{0 \le s \le 1}$.

A key tool for analyzing the Dyson vector flow is the \emph{eigenvector moment flow}, introduced in  \cite[Section 3.1]{bourgade2013eigenvector}, which characterizes the time evolution of the observable $f_s = f_{\mathbf{\bm \lambda}, s}$  defined by
\beq\label{e:observable2}
f_s (\bm \xi) = f_{\mathbf{\bm \lambda}, s}(\bm \xi) = \E \big[ Q^{j_1,\dots, j_m}_{i_1, \dots , i_m} (s) \mid {\bm \lambda} \big],
\eeq
where we recall $Q^{j_1,\dots, j_m}_{i_1, \dots , i_m} (s)$ was defined in \eqref{e:Qdef}.

\bet[{\cite[Theorem 3.1]{bourgade2013eigenvector}}] 

\label{t:momentflow} 

Let $\q \in \mathbb R^N$ be a unit vector and, for each $s \in [0, 1]$, set 
\beq
\label{cij}
 {c_{ij}(s) = N^{-1} \big(\lambda_i(s) - \lambda_j(s) \big)^{-2}}.
\eeq
Then,	 
\beq\label{e:evmf}
\partial_s f_s = \B(s) f_s, \quad \text{where} \quad \B(s) f_s (\bm \xi) = \sum_{i\neq j} c_{ij} (s) 2 \xi_i (1 + 2\xi_j) \big( f_s (\bm \xi^{ij})- f_s (\bm \xi) \big).
\eeq
\eet

Recall from \Cref{s:dynamicsstatements} that we view $N$-tuples $\bm \xi = (\xi_1, \xi_2, \ldots , \xi_N) \in \mathbb N^N$ as particle configurations, with $\xi_k$ particles at location $k$ for each $k \in [1, N]$; the total number of particles in this configuration is $n = \mathcal{N} (\bm \xi) = \sum_{j = 1}^N \xi_j$. We label the locations of these particles in non-decreasing order by 
\beq
x_1(\bm \xi) \le \cdots \le x_n (\bm \xi).
\eeq
Given another particle configuration ${\bm \zeta}$ with the same number $n$ of particles, whose locations are labeled by $\big( y_j ({\bm \zeta}) \big)$ in non-decreasing order, we define the distance 
\beq
d(\bm \xi, \bm \zeta) = \sum_{j =1}^n \big| x_j (\bm \xi) - y_j (\bm  \zeta) \big|.
\eeq

Next, we recall $\fc, \eta, \psi$ from \eqref{e:etadefine}, fix $n \in \mathbb N$, and define the parameter 
\begin{flalign}
\label{e:ell}
\ell = \ell (n) = \psi^{4n +1} N^{1+ \fd}\eta,\quad\text{where}\quad \fd = \mathfrak{d} (n) = 50 (n + 1) \fc. 
\end{flalign}  

\noindent Recalling $t$ from \eqref{t} (which satisfies \eqref{c1c2t0}), we also set
\beq\label{e:taudefine}
\tau = \tau (n) = t - N^{7 \fd} \psi \eta, \quad t_0 =  t - \psi \eta. 
\eeq

\noindent These are chosen so that for fixed $n > 0$, recalling the choices from \eqref{e:etadefine} (after selecting $\mathfrak{c} = \mathfrak{c} (n) > 0$ to be sufficiently small),
\begin{equation}
N^{-1/2} \ll \psi \eta \ll N^{7 \mathfrak{d}} \psi \eta \ll \tau <  t_0 < t < t_0 + \displaystyle\frac{\ell}{N} \ll 1.
\end{equation}
Recalling the operator $\B = \mathcal{B} (s)$ from \eqref{e:evmf}, we decompose it into the sum of a ``short range'' and ``long range'' operator, given explicitly by $\B = {\mathcal S} + \L$, where $\mathcal{S} =  \mathcal{S}_n = \mathcal{S}_n (s)$ and $\mathcal{L} = \mathcal{L}_n = \mathcal{L}_n (s)$ are defined by
\begin{align}
\label{sfs}
({\mathcal S} f_s)(\bm \xi) &= \sum_{ 0 < |j - k| \le \ell} c_{jk} (s) 2 \xi_j ( 1 + 2 \xi_k) \left(f_s (\bm \xi^{jk}) - f_s (\bm \xi)\right),\\
(\L f_s)(\bm \xi) &= \sum_{ |j - k| > \ell} c_{jk} (s) 2 \xi_j ( 1 + 2 \xi_k) \left(f_s (\bm \xi^{jk}) - f_s (\bm \xi)\right).
\end{align}
We let $\U_{\B}(s_1, s_2)$ be the semigroup associated with $\B$ and likewise define $\U_{{\mathcal S}}(s_1, s_2)$ and $\U_{\L}(s_1, s_2)$.

 We also let $\mathcal F_{t_0}$ denote the $\sigma$-algebra generated by $\{ \X_s \}_{0 \le s \le t_0}$, and define
\beq\label{e:observable3}
h_s (\bm \xi) = h_{\mathbf{\bm \lambda}, s}(\bm \xi) = \E \big[ Q^{j_1,\dots, j_m}_{i_1, \dots , i_m} (s) \mid {\bm \lambda}, \mathcal F_{t_0} \big] = \E \big[ Q^{j_1,\dots, j_m}_{i_1, \dots , i_m} (s) \mid {\bm \lambda}, \mathbf X_{t_0} \big].
\eeq
In the last equality, we used the Markov properties for Dyson Brownian motion and the eigenvector moment flow. Informally, $h_s (\bm \xi)$ corresponds to a solution of the eigenvector moment flow dynamics, run for time $s - t_0$, with initial data $\mathbf X_{t_0}$.

For consistency with \cite{bourgade2017eigenvector} we introduce the following notation. Let $K > 1$ be such that $K^{-1}$ is less than the constant $c$ from \Cref{l:rhocontinuity} and $K$ is greater than those from \Cref{l:thatlaw}, \Cref{l:utdeloc}, \Cref{l:gammagreen}, and \Cref{l:disperse}; and define 
\beq\label{e:dkappa}
r = \displaystyle\frac{1}{2K}; \qquad \mathcal D_r = \left\{ z = E + \iu \eta \colon \, |E| \le  r, \,  \frac{\psi^4}{N} \le \eta \le r \right\}.
\eeq
We define the function $\widetilde d = \widetilde{d}_n$ on $n$-particle configurations by 
\begin{equation}
\widetilde d (\bm \xi, \bm \zeta) = \max_{1\le \beta \le n} \bigg| \Big\{   1\le i \le N \colon \, \big| \gamma_i(t_0) \big| \le  r ,\,  \min \big\{ x_\beta(\bm \xi), y_\beta(\bm \zeta) \big\} \le i   \le  \max \big\{ x_\beta(\bm \xi),  y_\beta(\bm \zeta) \big\} \Big\} \bigg|.
\end{equation}

The next lemma provides several estimates necessary to analyze the eigenvector moment flow. Its first and second parts follow from \Cref{l:thatlaw} and \Cref{l:utdeloc}, respectively, together with a standard stochastic continuity argument. Its third part constitutes a special case of \cite[Corollary 3.3]{bourgade2017eigenvector}, whose assumptions are verified by \Cref{l:Xlocallaw}. Its fourth part is a consequence of \cite[(3.48)]{bourgade2017eigenvector}. In what follows, we recall $m_{\text{fc}} (s, z)$ from \eqref{e:mfcdef}, $\gamma_i (s)$ from \eqref{e:gammaidef}, and $\textbf{R} (s, z)$ from \eqref{rsz}.

\bel[{\cite{bourgade2017eigenvector}}]

\label{l:goodset} 

The initial data $\textbf{\emph{X}}_0$ and the Dyson Brownian motion $\{ \X_s \}_{0 < s \le t}$ together induce a measure  $\mathcal M$ on the space of eigenvalue and eigenvector  trajectories $\big( \bm\lambda(s), \u(s) \big)_{0\le s \le t}$ on which the following event of trajectories holds with overwhelming probability. 
\begin{enumerate}
\item Eigenvalue rigidity holds: $\sup_{t_0 \le s \le t} \big| m_N (s,z) - \mfc (s,z) \big| \le \psi (N\eta)^{-1}$ uniformly for $z\in \mathcal D_r$ and  $\sup_{t_0 \le s \le t} \big| \lambda_i(s) - \gamma_i(s) \big| \le \psi N^{-1}$ uniformly for indices $i$ such that $\big| \gamma_i(s) \big| \le r$. 

\item Delocalization holds: Conditional on $\bm \lambda$, for any $\textbf{\emph{q}} \in \mathbb{R}^N$ with stable support we have that
\beq\label{e:deloc}
\displaystyle\sup_{z \in \mathcal D_r} \sup_{t_0 \le s \le t} \Big| \big\langle \q , \R(s,z) \q \big\rangle  \Big| \le C(q) \psi; \qquad \displaystyle\sup_{z \in \mathcal D_{r}} \sup_{t_0 \le s \le t}  N \big\langle \u_i(s), \q \big\rangle^2   \le C(q) \psi,
\eeq

\noindent where $C (q) >0$ is a constant depending on $|\supp \q|$.

\item Finite speed of propagation holds: Let $n > 0$ be an integer, and abbreviate $\ell = \ell (n)$ and $\mathcal{S} = \mathcal{S}_n$.   Conditional on $\bm \lambda$, we have the following estimate that is uniform in any function $g \colon \big\{ \bm \xi \in \mathbb N^N: \mathcal{N} (\bm \xi) = n \big\} \rightarrow \mathbb{R}$. For a particle configuration $\bm \xi \in \mathbb N^N$ with $\mathcal{N} (\bm \xi) = n$ such that $\widetilde d_n (\bm \xi, \bm \zeta		) \ge \psi \ell$ for each $\bm \zeta$ in the support of $g$, we have
\beq
\label{e:finitespeed}
\sup_{t_0 \le s \le t} \big| \U_{{\mathcal S}}(t_0,s) g (\bm \xi) \big| \le  N^n e^{-c\psi} \| g \|_{\infty}.
\eeq
\item  For  any interval $I \subset \big[ -  r,  r \big]$ of length $|I|  \ge \psi N^{-1}$, we have 
\beq
\label{e:xtdispersion}
 C^{-1} | I | N \le \Big| \big\{ i \colon \lambda_i(s) \in I \big\} \Big| \le C |I| N,
\eeq

\noindent uniformly in $s \in [t_0, t]$. 

\end{enumerate}

\eel

The next estimate on the short range operator $\mathcal S$ is a consequence of \cite[Lemma 3.5]{bourgade2017eigenvector} (whose conditions are verified by \Cref{l:Xlocallaw} and \Cref{l:hgammarigid}). 

\bel[{\cite[Lemma 3.5]{bourgade2017eigenvector}}]\label{l:shortrange} 

There exists a constant $C > 0$ such that the following holds with overwhelming probability with respect to $\mathcal{F}_{t_0}$. Fix an integer $n > 0$, and abbreviate $\ell = \ell (n)$ (recall \eqref{e:ell}) and $\mathcal{S} = \mathcal{S}_n$. There exists an event $\mathcal E$ of trajectories $\big( \bm\lambda(s), \u(s) \big)_{t_0 \le s \le t}$ of overwhelming probability on which we have
\beq
\displaystyle\sup_{s \in [t_0, t]} \Big| \big( \U_\B(t_0,s)  h_{t_0}  - \U_{\mathcal S}(t_0,s)  h_{t_0}\eer \big) \left( \bm \xi \right) \Big| \le C \frac{ \psi^{ n } N(t - t_0)  } {\ell}
\eeq

\noindent for any configuration $\bm \xi \in \mathbb N^N$ such that $\mathcal{N} (\bm \xi) = n$ and $\supp \bm \xi \subset \big[(1/2 - c) N - 2\psi \ell, (1/2 + c) N  + 2\psi\ell \big]$.
\eel

\subsection{Continuity estimates}\label{s:3aux}

To prepare for the proof of \Cref{t:dynamics}, we require the following continuity estimates for entries of $\textbf{R} (t, z)$. We recall $\mathfrak{a}$ from \eqref{e:parameters}; $t_0$ and $\tau$ from \eqref{e:taudefine}; and $r$ from \eqref{e:dkappa}, and define
\beq 
\widehat{\mathcal{D}} = \left\{ z = E + \iu \eta \colon \, |E| \le \displaystyle\frac{ r}{4}, \,  N^{-\mathfrak{a}} \le \eta \le \displaystyle\frac{r}{4} \right\}.
\eeq

\bel

\label{l:tconcentration} 

Fix an integer $n > 0$, a real number $ \delta >0$, and a unit vector $\q$ with stable support; set $q = \left| \supp \q \right|$, and abbreviate $\tau = \tau (n)$. Then, there exist constants $c > 0$ (independent of $n$, $\delta$, and $q$) and $C = C(\delta, q, n) >0$ such that, uniformly in $t_1, t_2 \in  [t_0 , t] \eer $ with $t_1 < t_2$, we have
\begin{flalign}
\label{rt1t2estimate}	
& \displaystyle\sup_{z \in \widehat{\mathcal{D}}} \Big| \big\langle \q , \R(t_1, z ) \q \big\rangle   - \big\langle \q , \R(t_2, z ) \q \big\rangle \Big|  \le C \frac{\psi^4}{\sqrt{N\eta}} +  CN^{\delta} \bigg( \frac{t_2 - t_1}{t_2 - \tau} + N^{-c} \bigg) ; \\
& \displaystyle\sup_{\substack{z_1, z_2 \in \widehat{\mathcal{D}} \\ \Im z_1 = \Im z_2}} \Big| \big\langle \q , \R(t_1, z_1 ) \q \big\rangle   - \big\langle \q , \R(t_1, z_2 ) \q \big\rangle \Big|  \le C \frac{\psi^4}{\sqrt{N\eta}} + C N^{\delta} \bigg(N^{-c} + \displaystyle\frac{|z_1 - z_2|  + \Im z_1  }{t_1 - \tau} \bigg), \label{e:secondconclusion}
\end{flalign}

\noindent both with overwhelming probability.
\eel

\begin{proof}[Proof of \Cref{l:tconcentration}]
	
	For $s\ge \tau$, define
	\beq
	\label{rszi}
	r_i (s,z) = \frac{1}{\lambda_i(\tau ) - z - (s - \tau) \mfc(s,z)},
	\eeq
	which is similar to the terms appearing in the definition of the free convolution \eqref{e:mfcdef} (but, in a sense, ``started'' at $\bm{\lambda} (\tau)$ instead of at $\bm{\lambda} (0)$). 
	
	Now let us apply \cite[Theorem 2.1]{bourgade2017eigenvector}, with the $t$ there equal to $s - \tau$ here and the $H_0$ given by $\textbf{X}_{\tau}$ here; the assumptions of that theorem are verified by \eqref{rijestimate}, \eqref{mszestimate}, and the facts that $\tau \gg N^{-1 / 2}$ and $s - \tau \ge  t_0 \eer - \tau \gg N^{-1 / 2}$. For any $s \in  [t_0 , t] \eer$, this gives with overwhelming probability that
	\begin{equation}\label{e:t21}
	\left| \big\langle \textbf{q}, \R(s, z) \q \big\rangle - \sum_{i=1}^N \big\langle \u_i (\tau), \q \big\rangle^2 r_i(s, z) \right| \le \frac{\psi^2}{\sqrt{N\eta}}\Im \left( \sum_{i=1}^N \big\langle \u_i (\tau), \q \big\rangle^2 \big| r_i (s, z) \big| \right).
	\end{equation}
	
	\noindent  Thus, \eqref{ab}, \eqref{kij}, and a union bound over $s$ in an $N^{-10}$-net of $ [t_0 , t] \eer$ together yield that \eqref{e:t21} holds with overwhelming probability, uniformly in $s \in  [t_0 , t] \eer$.
	
	To bound the right side of \eqref{e:t21}, observe that \eqref{rijestimate2},  \cite[(2.3)]{bourgade2017eigenvector}, and the exchangeability of the eigenvector entries together yield the bound 
	\begin{equation}
	\label{uiqestimate}
	\sum_{i=1}^N \big\langle \u_i (\tau),  \textbf{q} \big\rangle^2 \big|  r_i(s,z) \big| \le  C \psi (\log N)^2,
	\end{equation}
	
	\noindent uniformly for any standard basis vector $\textbf{q} \in \{ \textbf{e}_1, \textbf{e}_2, \ldots , \textbf{e}_N \}$. Therefore, \eqref{uiqestimate} also holds for unit vectors $\q \in \mathbb{R}^N$ of stable support (where the $C$ there now depends on $q = |\supp \textbf{q}|$), by expanding $\q$ in the standard basis and using the inequality $(a+ b)^2 \le 2(a^2 + b^2)$ on the products $\langle \u_i , \e_j \rangle \langle \u_i , \e_k \rangle$ that appear in the corresponding expansion of $\big\langle \u_i (\tau),  \q \big\rangle^2$. Thus, \eqref{e:t21} and \eqref{uiqestimate} together imply  
	\begin{equation}\label{e:t22}
	\left| \big\langle \textbf{q}, \R(s, z) \q \big\rangle - \sum_{i=1}^N \big\langle \u_i (\tau), \q \big\rangle^2 r_i(s, z) \right| \le \displaystyle\frac{C(q)\psi^4}{\sqrt{N\eta}}. 
	\end{equation}
	
	We now establish \eqref{rt1t2estimate} by subtracting \eqref{e:t22} evaluated at $s=t_2$ from that equation evaluated at $s=t_1$. To that end, we have from \eqref{rszi} that
	\beq 
	\label{rit1t2}
	\big| r_i (t_1 ,z) - r_i(t_2 ,z) \big| = \frac{ \big| (t_2-\tau) \mfc(t_2,z) -  (t_1-\tau) \mfc(t_1,z)  \big|  }{\Big| \big( \lambda_i(\tau ) - z - (t_1-\tau) \mfc(t_1,z) \big)  \big( \lambda_i(\tau ) - z - (t_2-\tau) \mfc(t_2,z) \big) \Big|}.
	\eeq
	
	\noindent The numerator of the right side of \eqref{rit1t2} is with overwhelming probability bounded by
	\begin{flalign}
	\label{t1t2mz}
	\begin{aligned}
	& \big| (t_1 - \tau) \mfc(t_1, z) -  (t_2 - \tau) \mfc(t_2, z) \big| \\ 
	& \quad \le   |t_1 - t_2| \big| \mfc(t_2,z) \big|  +  |t_1 - \tau| \eer \big| \mfc(t_1,z) - \mfc(t_2,z) \big| \le  C (\log N)^2 \eer (t_2 - t_1) +  C (t_1 - \tau) \eer N^{-c},
	\end{aligned} 
	\end{flalign} 
	
	\noindent Here, in the last inequality we used the overwhelming probability bound $\big | \mfc(t_2, z) \big|  \le C (\log N)^2$ (which follows from  \cite[(2.3)]{bourgade2017eigenvector}, whose hypotheses are satisfied by \eqref{mn0z} with $s=0$) and also the overwhelming probability estimate 
	\begin{flalign}
	\big| \mfc(t_1,z) - \mfc(t_2,z) \big| & \le \big| \mfc(t_1,z) - m_{\alpha}  (z) \big| + \big| \mfc(t_2,z) - m_{\alpha} (z) \big|  \le C N^{-c},
	\end{flalign}
	
	\noindent where the last inequality follows from \eqref{mszestimate} (with the $\delta$ there equal to $\frac{1}{2} \big( \frac{1}{2} - \mathfrak{a} \big)$ here).

	To bound the denominator of the right side of \eqref{rit1t2} observe, since ${\Im \mfc(t_1, z) \ge c'}$ for some uniform constant $c' > 0$ (which is a consequence of \eqref{mszestimate} and \eqref{e:mabounds}), we have with overwhelming probability that
	\beq
	\frac{ 1 }{ \big( \lambda_i(\tau ) - z - (t_1-\tau) \mfc(t_1,z) \big) \big( \lambda_i(\tau ) - z - (t_2-\tau) \mfc(t_2,z) \big) } \le C \frac{\big| r_i(t_1, z) \big| }{ t_2 - \tau}.
	\eeq
	
	\noindent Altogether, we obtain with overwhelming probability that
	\begin{multline}
	\big| r_i (t_1 ,z) - r_i(t_2 ,z) \big|  \le C \frac{\big| r_i(t_1, z) \big|}{ t_2 - \tau} \big( (\log N)^2 (t_2 - t_1) + ( t_1 \eer - \tau) N^{-c} \big)  \\ \le C \big| r_i(t_1, z) \big|   \bigg(  (\log N)^2 \frac{t_2 - t_1}{t_2 - \tau} + N^{-c} \bigg) \eer, 
	\end{multline}
	 where we used $t_1 - \tau \le t_2 - \tau$ in the last inequality. \eer It follows that
	\begin{flalign} 
	\label{uirisum}
	\begin{aligned}
	\Bigg| \sum_{i=1}^N \big \langle \u_i (\tau), \q \big\rangle^2 r_i(t,z) & - \sum_{i=1}^N \big\langle \u_i (\tau ), \q \big\rangle^2 r_i(\tau ,z) \Bigg| \\
	& \qquad  \le  C (\log N)^2  \bigg(  \frac{t_2 - t_1}{t_2 - \tau} + N^{-c} \bigg)\eer  \displaystyle\sum_{i = 1}^N \big\langle \textbf{u}_i (\tau), \textbf{q} \big\rangle^2 \big| r_i (t_1, z) \big|. 
	\end{aligned} 
	\end{flalign}
	
	To bound the right side of \eqref{uirisum}, observe that 
	\begin{flalign}
	\label{uqisum} 
	\displaystyle\sum_{i = 1}^N \big\langle \textbf{u}_i (\tau), \textbf{q} \big\rangle^2 \big| r_i (t_1, z) \big| = \displaystyle\sum_{\substack{1 \le i \le N \\ |\lambda_i (\tau)| < r}} \big\langle \textbf{u}_i (\tau), \textbf{q} \big\rangle^2 \big| r_i (t_1, z) \big| + \displaystyle\sum_{\substack{1 \le i \le N \\ |\lambda_i (\tau)| \ge r}} \big\langle \textbf{u}_i (\tau), \textbf{q} \big\rangle^2 \big| r_i (t_1, z) \big|. 
	\end{flalign}
	
	\noindent We bound the first term on the right side of \eqref{uqisum} using \Cref{l:utdeloc}, which yields 
	\begin{flalign}
	\label{lambdarestimate1} 
	 \sum_{\substack{1 \le i \le N \\ |\gamma_i (\tau)| < r}} \big\langle \u_i (\tau), \q \big\rangle^2 \big| r_i (t_1, z) \big| \le N^{\delta / 2 - 1} \displaystyle\sum_{i = 1}^N \big| r_i (t_1, z) \big| \le N^{\delta},
	\end{flalign}
	
	\noindent for any $\delta > 0$, with overwhelming probability. Here, in the last bound we used the fact that $\sum_{i = 1}^N \big| r_i (t_1, z) \big| \le N (\log N)^2$ (which is a consequence of \cite[Lemma 7.5]{landon2017convergence}, whose assumptions are verified by \eqref{rijestimate} and the fact that $t_1 - \tau \gg N^{-1 / 2}$). 
	
	To bound the second term in \eqref{uqisum}, observe for $\big| \gamma_i(\tau) \big| > r$ we have $\big| \lambda_i(\tau) \big| > \frac{3 r}{4}$ (by \eqref{e:xtrigidity}), so
	\beq
	\big | \lambda_i(\tau ) - z - (t_1-\tau) \mfc(t_1,z) \big|  \ge c,
	\eeq 
	for some constant $c >0$, where we used $ t_1-\tau \ll 1$ and $ |\mfc(t_1,z)| < C (\log N)^2$ (again by \cite[(2.3)]{bourgade2017eigenvector}). We then obtain by \eqref{rszi} that 
	\beq
	\label{lambdarestimate2}
	\sum_{\substack{ 1 \le i \le N \\ | \gamma_i(\tau) | \ge r }} \big\langle \u_i (\tau), \q \big\rangle^2 \big| r_i(t_1, z)  \big|  \le C \sum_{i = 1}^N \big\langle \u_i (\tau), \q \big\rangle^2 \le C.
	\eeq
	
	\noindent Now the first bound \eqref{rt1t2estimate} of the lemma follows from \eqref{e:t22}, \eqref{uirisum}, \eqref{uqisum}, \eqref{lambdarestimate1}, and \eqref{lambdarestimate2},  after absorbing the $(\log N)^2$ prefactor into $N^{\delta}$ and adjusting $\delta$ appropriately. \eer We omit the proof of the second as it is analogous, but obtained by replacing \eqref{rit1t2} with the bound
	\beq 
	\big| r_i (t_1, z_1) - r_i(t_1, z_2) \big|  \le \frac{ (t_1 - \tau) \big| \mfc(t_1, z_1) - \mfc(t_1, z_2)  \big| + |z_1 - z_2|}{\Big| \big( \lambda_i(\tau ) - z_1 - (t_1-\tau) \mfc(t_1,z_1) \big)  \big( \lambda_i(\tau ) - z_2 - (t_1-\tau) \mfc(t_1,z_2) \big) \Big|},
	\eeq
		
	\noindent and \eqref{t1t2mz} with the bound 
	\begin{flalign}
	 (t_1 - \tau) & \big| \mfc(t_1, z_1) - \mfc(t_1, z_2) \big| \\
	& \le (t_1 - \tau) \Big( \big| \mfc(t_1, z_1) - m_{\alpha} (z_1) \big| + \big| \mfc(t_1, z_2) - m_{\alpha} (z_2) \big| + \big| m_{\alpha} (z_1) - m_{\alpha} (z_2) \big|  \Big) \\
	& \le C (t_1 - \tau) \big( N^{-c} + |z_1 - z_2|  + \Im z_1 \eer \big), \label{t1z1z2estimate}
	\end{flalign} 
	
	\noindent where \eqref{t1z1z2estimate} follows from \eqref{mszestimate} and  \eqref{e:macontinuity}.\eer
\end{proof}

\subsection{Short-time relaxation}

\label{s:relax} 

The proof of short-time relaxation here is similar to that of \cite[Theorem 3.6]{bourgade2017eigenvector}. However, certain changes are necessary, since the diagonal resolvent entries $R_{ii}(t,z)$ for the removed model $\X_t$ do not converge to a deterministic quantity, unlike those of the matrix model considered in \cite{bourgade2017eigenvector}. This causes the observable $f_{\mathbf{\bm \lambda}, t}(\bm \xi)$ from \eqref{e:observable2} to now converge to the random variable $A(\textbf{q}, \bm \xi)$, which is defined as follows. 

Recall $t$ and $t_0$ from \eqref{t} and \eqref{e:taudefine}, respectively; recall that $\{ \lambda_j \}$ are the eigenvalues of $\X_s$ and that $\{ \gamma_j (s) \big\}$ are given by \eqref{e:gammaidef}; fix a unit vector $\q \in \mathbb{R}^N$ with stable support; and set $q = |\supp \textbf{q}|$. For any integer $k \in [1, N]$; particle configuration $\bm \xi = (\xi_1, \xi_2, \ldots , \xi_N)$; and eigenvalue trajectory $\bm \lambda$, define  $A (\q, k ) = A_{t, \bm \lambda} (\textbf{q}, k)$ and $A (\textbf{q}, \bm \xi) = A_{t, \bm \lambda} (\q, \bm \xi)$ by \eer
\begin{equation}
\label{atqk}
A (\q, k ) =
 \frac{\Im \big\langle \q , \R ( s , \widehat \gamma_{k}   + \iu \eta ) \q \big\rangle }{  \Im m_\alpha (\gamma_k + \iu \eta)  },
  \quad A (\textbf{q}, \bm \xi) = \prod_{k = 1}^N A (\q, k )^{\xi_k},
\end{equation}

\noindent where we have recalled $\gamma_k = \gamma_k^{(\alpha)}$ from \eqref{gammaalphai} and $\widehat{\gamma}_k$ from \eqref{hatgammaalphai}. 
	
	The initial data $\textbf{X}_0$ and Dyson Brownian motion $\X_s$ for $0\le s \le  t$ together induce a measure on the space of eigenvalues and eigenvectors $\big( \bm\lambda(s), \u(s) \big)_{0\le s \le t}$, which we denote by $\mathcal M$. Let $\bm \lambda = \big( \bm \lambda(s) \big)_{0 \le s \le t}$ be an eigenvalue trajectory with initial data given by a realization of the spectrum of $\X_0$, and recall the observable $ h_s (\bm \xi)\eer$ from \eqref{e:observable3} that is associated to an eigenvalue trajectory $\bm \lambda$ and ``starts'' at time $t_0$. 
	
	Before proceeding, we first fix a small constant $c_0 = c_0 (\alpha) > 0$ such that the conclusions of \Cref{l:rhocontinuity}, \Cref{l:thatlaw}, \Cref{l:goodset}, \Cref{l:shortrange}, and \Cref{l:tconcentration} apply to any $z= E + \iu \eta \in \mathcal{D}_{r}$ with $|E| \le 16 c_0$ and any $i \in [1, N]$ with $|\gamma_i(s)| < 16 c_0$. By \Cref{l:hgammarigid}, we may choose $c_1>0$ such that, for any fixed real number $s \in [t_0, t]$ and index $i \in \big[ (1/2- c_1) N, (1/2 + c_1) N \big]$, we have that $\big| \gamma_i(s) \big| < c_0$ with overwhelming probability. Hence, we will fix this choice of $c_1 >0$ in what follows and apply the five lemmas listed above without further comment.

	Then, to establish \Cref{t:dynamics}, it suffices prove the following proposition. 
	
	\begin{prop} 
		
	\label{ftatqxi} 
	
	For any integer $m \ge 0$, there exist constants $c_2 = c_2 (m) > 0$ and $C=C(m, q) >0$ such that for $\mathfrak{c} < c_2$ we have
	\begin{equation}
	\label{mxiestimate}
	\max_{\substack{\bm \xi \in \mathbb{N}^N \colon \mathcal N (\bm \xi) = m \\ \supp \bm \xi \in  [ (1/2 - c_1) N  , (1/2 + c_1) N  ]}} \big|  h_t(\bm \xi) \eer - A(\q, \bm \xi) \big| \le C N^{-c_2}, 
	\end{equation} 
	
	\noindent with overwhelming probability with respect to $\mathcal M$.
	
	\end{prop}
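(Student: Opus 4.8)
\emph{Proof strategy.} The plan is to analyze the eigenvector moment flow on $[t_0,t]$ and show it relaxes the observable $h_s(\bm\xi)$ to the random quantity $A(\q,\bm\xi)$, following the short-time relaxation scheme of \cite{bourgade2017eigenvector}. The essential new feature is that, since the diagonal resolvent entries $R_{jj}(t,z)$ of $\X_t$ do not concentrate, the relaxation target is $A(\q,\bm\xi)$ rather than the constant $1$. Throughout, estimates are asserted on the event of full $\mathcal M$-measure on which the conclusions of \Cref{l:goodset}, \Cref{l:shortrange}, \Cref{l:tconcentration}, \Cref{l:newrigidity}, \Cref{l:hgammarigid}, and \Cref{l:Xlocallaw} all hold; call a configuration \emph{admissible} if $\mathcal N(\bm\xi)=m$ and $\supp\bm\xi\subset[(1/2-c_1)N,(1/2+c_1)N]$. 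By the Markov property of Dyson Brownian motion and the eigenvector moment flow (see the discussion around \eqref{e:observable3} and \Cref{t:momentflow}), conditionally on $\bm\lambda$ the observable $h_s(\bm\xi)$ solves $\partial_s h_s=\B(s)h_s$ on $[t_0,t]$ with initial datum $h_{t_0}$, so $h_t=\U_\B(t_0,t)h_{t_0}$. Applying \Cref{l:shortrange} (with its $n$ equal to our $m$) to discard the long-range part of the generator yields, for any admissible $\bm\xi$, the bound $h_t(\bm\xi)=\big(\U_{\mathcal S}(t_0,t)h_{t_0}\big)(\bm\xi)+\OO{\psi^m N(t-t_0)/\ell}$, whose error is at most $N^{-c_2}$ once $\fc$ is small, by \eqref{e:ell} and \eqref{e:taudefine}.

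\emph{Reference observable.} For $s\in[t_0,t]$, $k\in[1,N]$, and $\q$ a unit vector with stable support, set
\[
B_s(\q,k)=\frac{\Im\big\langle\q,\R(s,\widehat\gamma_k+\iu\eta)\q\big\rangle}{\Im m_\alpha(\gamma_k+\iu\eta)},\qquad B_s(\q,\bm\xi)=\prod_{k=1}^N B_s(\q,k)^{\xi_k},
\]
so that, recalling \eqref{atqk}, $A(\q,\bm\xi)=B_t(\q,\bm\xi)$. The spectral decomposition of $\R$ and the Ward identity \eqref{kijsum} give
\[
\Im\big\langle\q,\R(s,E+\iu\eta)\q\big\rangle=\frac1N\sum_{i=1}^N\frac{\eta\,N\langle\q,\u_i(s)\rangle^2}{(\lambda_i(s)-E)^2+\eta^2},
\]
which exhibits $B_s(\q,k)$, up to the normalization $\Im m_\alpha(\gamma_k+\iu\eta)$ bounded above and below by \eqref{e:mabounds}, as an average of the $N\langle\q,\u_i(s)\rangle^2$ over the $\asymp N\eta$ indices $i$ with $|\lambda_i(s)-\widehat\gamma_k|\lesssim\eta$; by \eqref{e:newrigidity}, the rigidity in \Cref{l:goodset}(1), and the counting bound \eqref{e:xtdispersion}, these are $\asymp N\eta$ consecutive indices centered near $k$. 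Since $\ell\gg N\eta$ by \eqref{e:ell} and \eqref{e:etadefine}, the local averages $B_s(\q,k)$ vary negligibly across any short-range window, so $B_s(\q,\cdot)$ behaves as a flat configuration for the short-range generator $\mathcal S_m$; combining this with the time-continuity estimate \eqref{rt1t2estimate} to control $\partial_s B_s$, one verifies that $B_s$ is an approximate short-range stationary solution, $\partial_s B_s=\mathcal S_m B_s+\mathcal E_s$, with $\sup_{s\in[t_0,t]}\|\mathcal E_s\|_\infty$ a negative power of $N$.

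\emph{Relaxation (the main obstacle).} Put $w_s(\bm\xi)=h_s(\bm\xi)-B_s(\q,\bm\xi)$, which (in the short-range approximation above) solves $\partial_s w_s=\mathcal S_m w_s-\mathcal E_s$ on $[t_0,t]$. Since $\U_{\mathcal S}$ is an $L^\infty$-contraction, the inhomogeneous term contributes only $\OO{(t-t_0)\sup_s\|\mathcal E_s\|_\infty}$, again a negative power of $N$. The difficulty is that $w_{t_0}$ is not small — delocalization (\Cref{l:goodset}(2)) bounds $|w_{t_0}(\bm\xi)|$ only by $\psi^{C(m)}$ — so one needs genuine decay of the short-range semigroup over the window $[t_0,t]$, whose length $t-t_0=\psi\eta$ exceeds the scale $\eta$ on which \Cref{l:Xlocallaw} holds by the factor $\psi=N^{\fc}$. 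This decay is obtained from a maximum-principle/dichotomy argument for the short-range flow: on one hand $B_s$ is an approximate short-range stationary state; on the other, the local law pins the local averages of $h_{t_0}$ to $B_{t_0}$, so $\sup_{\bm\xi}|w_{t_0}(\bm\xi)|$ over admissible configurations can be large only at the microscopic scale, and the short-range dynamics — which spread the $N\langle\q,\u_i(s)\rangle^2$ over index windows growing in $s$ — force $\sup_{\bm\xi}|w_s(\bm\xi)|$ down to $N^{-c_2}$ well before time $t$. I expect this to be the principal difficulty: the relaxation target $B_s$ is now random and $s$-dependent, so its approximate stationarity must be secured first, and the window $t-t_0$ is only a power of $N^{\fc}$ longer than $\eta$. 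The precise dichotomy statements and the continuity inputs they require will be proved in \Cref{s:dichoproof} and \Cref{s:proofofcrefaux1}, adapting the corresponding arguments of \cite{bourgade2017eigenvector}.

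\emph{Conclusion.} Combining the three steps gives $|h_t(\bm\xi)-B_t(\q,\bm\xi)|\le CN^{-c_2}$ for all admissible $\bm\xi$. As $B_t(\q,\bm\xi)=A(\q,\bm\xi)$ by \eqref{atqk} — with the auxiliary spectral parameters appearing in intermediate comparisons (such as $\widehat\gamma_k(s)$ for $s\in[t_0,t]$ versus $\widehat\gamma_k=\widehat\gamma_k(t)$, or $\gamma_k$ versus $\gamma_k(s)$) reconciled using \Cref{l:newrigidity} together with the Lipschitz bounds \eqref{rt1t2estimate}, \eqref{e:secondconclusion}, and \eqref{e:macontinuity} — this is exactly \eqref{mxiestimate}, with $c_2=c_2(m)$ the smallest of the exponents produced above and any $\fc<c_2$.
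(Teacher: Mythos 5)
Your opening reduction (discarding the long-range generator via \Cref{l:shortrange}) and your identification of the random, $s$-dependent relaxation target are consistent with the paper. But the core of your argument --- that $B_s(\q,\cdot)$ is an approximate short-range stationary solution in $L^\infty$, so that $w_s=h_s-B_s$ is handled by Duhamel plus contractivity of $\U_{\mathcal S}$ --- has a genuine gap. First, $\mathcal S_m B_s$ is not small in sup norm by a term-by-term estimate: the coefficients $c_{jk}=N^{-1}(\lambda_j-\lambda_k)^{-2}$ are of order $N$ (or worse, absent level repulsion) for neighbouring eigenvalues, while $|B_s(\q,j)-B_s(\q,k)|$ for $|j-k|=1$ is only $O(\psi/(N\eta))$, so a single nearest-neighbour term already contributes $O(\psi/\eta)$, and $(t-t_0)\|\mathcal E_s\|_\infty$ is then of order $\psi^2$, not a negative power of $N$. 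The paper never estimates $\mathcal S B$ pointwise; it evaluates the generator only at the \emph{maximizer} $\widetilde{\bm\xi}$ of $g_s$ over $\Omega^{(b)}(k_1,k_2)$, uses the sign $g_s(\widetilde{\bm\xi}^{j_pk})\le g_s(\widetilde{\bm\xi})$ to replace $(\lambda_{j_p}-\lambda_k)^{-2}$ by the harmless $((\lambda_{j_p}-\lambda_k)^2+\eta^2)^{-1}$ as in \eqref{sumkgsxi}, and re-sums the full Poisson kernel exactly into $\Im\langle\q,\R\q\rangle$. Second, $L^\infty$-contractivity of $\U_{\mathcal S}$ alone cannot reduce $\|w_{t_0}\|_\infty\sim\psi^{C(m)}$ to $N^{-c_2}$; a contraction does not strictly decrease the sup norm. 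The decay comes from the quantitative negative-feedback term \eqref{e:bound1}, $-\tfrac{c}{\eta}(g_s(\widetilde{\bm\xi})-A(k_1,k_2))$, integrated over a window of length $\psi\eta/m$ to gain the factor $e^{-c\psi}$ --- and establishing \eqref{e:ultimate} is precisely the content you defer to the paper's own \Cref{s:dichoproof} and \Cref{s:proofofcrefaux1}.

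The second structural omission is the induction on the particle number together with the conditioning on $\mathcal F_{t_0}$. When the generator acts at the maximizer, the resulting sum $\tfrac1N\sum_k\eta\,h_s(\widetilde{\bm\xi}^{j_pk})/((\lambda_{j_p}-\lambda_k)^2+\eta^2)$ is \emph{not} of the form ``$h_s$ minus its local average'': because $h_s(\bm\xi)=\E[Q(s)\mid\bm\lambda,\mathcal F_{t_0}]$ and the resolvent $\R(t_0,\cdot)$ is $\mathcal F_{t_0}$-measurable, the sum over $k$ factors as $A(\q,j_p)\cdot h_s(\widetilde{\bm\xi}\setminus j_p)$ (see \eqref{e:whycondition}--\eqref{e:whycondition2}), i.e.\ an $(n-1)$-particle observable. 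Closing the estimate therefore requires the induction hypothesis \eqref{e:inductionstep} at level $n-1$ on the expanded intervals, the time discretization $t_n=t_0+kn^{-1}\psi\eta$ giving each level its own relaxation window, and the averaging/flattening operators $\av^{(b)}$, $\flat^{(b)}_a$ with finite speed of propagation to control boundary effects. None of this machinery appears in your proposal, so the ``dichotomy'' you invoke cannot be closed as written.
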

	
	\begin{proof}[Proof of \Cref{t:dynamics}]
		
		Recall from \eqref{e:observable} and \eqref{e:observable3} that $F_t (\bm \xi) = \mathbb{E} \big[ h_t(\bm \xi) \big]$, where the expectation is over $\bm \lambda$ and $\textbf{X}_{t_0}$. Therefore, the theorem follows from applying \eqref{mxiestimate} on an event of overwhelming probability, and applying the deterministic bounds $\big| h_t(\bm \xi) \big| \le N^m$ and $\big| A(\q, \bm \xi) \big| \le C^m \eta^{-m}$ (which holds by applying \eqref{kij} to bound the numerator of $A(\q, \bm \xi)$ by $\eta^{-m}$ and \eqref{e:mabounds} to bound its denominator by $c^m$) off of this event.
	\end{proof}

	We now introduce some notation. Fix a particle configuration $\bm \zeta = (\zeta_1, \zeta_2, \ldots , \zeta_N) \in \mathbb{N}^N$ with $m = \mathcal{N} (\bm \zeta)$ particles such that $ \supp \bm \zeta \in \big[ (1/2 - c_1) N, (1/2 + c_1) N \big]$. We must verify \eqref{mxiestimate} for $\bm \xi = \bm \zeta$. 
	
	For notational simplicity, we assume that $|\supp \bm \zeta| = 2$. The cases where $\bm \zeta$ is supported on one site and on more than two sites constitute straightforward modifications of the following two-site argument and will be briefly outlined in \Cref{OneInterval}. Denote $\supp \bm \zeta = \{ i_1, i_2 \}$, with $i_1 < i_2$, and let $j_1$ and $j_2$ denote the number of particles in $\bm \zeta$ at sites $i_1$ and $i_2$, respectively. Thus $\zeta_{i_1} = j_1$, $\zeta_{i_2} = j_2$, and $j_1 + j_2 = m$.

	 Recalling $\fd = \mathfrak{d} (m) = 50 ( 1 + m) \fc$ and $\ell = \ell (m) = \psi^{4m + 1} N^{1 + \mathfrak{d}} \eta$ from \eqref{e:ell}, we define a ``short-range averaging parameter''
	\beq
	\label{e:ddef}
	\widetilde d = \lfloor \ell \psi^{5 m} N^\fd \rfloor,
	\eeq
	
	\noindent which by \eqref{e:etadefine} and \eqref{e:taudefine} satisfies $\psi^2  \ell  \ll   \fa \ll N t_0$ (assuming $\mathfrak{c} = \mathfrak{c} (m) > 0$ is sufficiently small). For $a\in \mathbb{R}$ and $b\in \mathbb N$, we further define the interval $I^{(b)}_a = I^{(b)}_a(\bm \zeta)$ by 
\beq\label{e:twosites}
I^{(b)}_a =I^{(b)}_{a,1} \cup I^{(b)}_{a,2},
\eeq
where the intervals $I^{(b)}_{a, 1} = I^{(b)}_{a, 1} (\bm \zeta)$ and $I^{(b)}_{a, 2} = I^{(b)}_{a, 2} (\bm \zeta)$ are given by  

\beq
\label{iba1iba2}
I^{(b)}_{a,1} = \big[ i_1 - 10b\fa - a , i_1 + 10b\fa + a  \big], \qquad I^{(b)}_{a,2} =   \big[ i_2 - 10b\fa - a, i_2 +  10b\fa + a \big].
\eeq
\eer
We assume the intervals $I^{(b)}_{a,1}$ and $I^{(b)}_{a,2}$ are disjoint for all $a \in [0, 2 \tilde d]$ and $b \in [0, m]$.\footnote{The reason for this assumption will be seen below, in the material immediately following \eqref{gsxigsxi}.} When this is not true, the argument below is carried out analogously, but instead using a single connected interval. We describe the necessary modifications in \Cref{OneInterval}.

\begin{defn} 
	
	\label{intervalsomegab}
	
	\noindent For any particle configuration $\bm \xi \in \mathbb{N}^N$ with $\supp \bm \xi \subset  I_{\widetilde d + \psi \ell}^{(b)}$, we further set
\beq
\chi^{(b)}_1 (\bm \xi) = \sum_{i \in  I^{(b)}_{\widetilde d + \psi \ell, 1 } } \xi_i, 
\qquad  \chi^{(b)}_2 (\bm \xi) =  \sum_{i \in  I^{(b)}_{\widetilde d + \psi \ell, 2 } } \xi_i,
\eeq
	
\noindent which denote the number of particles in $\bm \xi$ in $I^{(b)}_{\widetilde d + \psi \ell, 1 }$ and $I^{(b)}_{\widetilde d + \psi \ell, 2}$, respectively. For any integers $k_1, k_2, b \ge 0$ and $n \ge 1$, recall $A (\textbf{q}, k)$ from \eqref{atqk} and set
\begin{flalign}\label{e:Atk1k2}
& \Omega^{(b)}(k_1, k_2) = \Big\{ \bm \xi \in \mathbb{N}^N: \supp \bm \xi \subset I^{(b)}_{\widetilde d + \psi \ell},   \chi^{(b)}_1 (\bm \xi)  = k_1,    \chi^{(b)}_2 (\bm \xi)  = k_2 \Big\}; \\
& \Omega^{(b)}(n) = \bigcup_{\substack{ k_1 + k_2 = n }} \Omega^{(b)} (k_1, k_2); \qquad A (k_1, k_2) = A ( \q , i_1)^{k_1} A ( \q , i_2)^{k_2}. 
\end{flalign}

\noindent  We also define the restricted intervals
\begin{flalign}
& \Phi^{(b)}(k_1, k_2) = \Bigg\{ \bm \xi \in \mathbb{N}^N: \supp \bm \xi \subset I^{(b)}_{ -  \psi \ell},   \sum_{i \in  I^{(b)}_{- \psi \ell, 1 } } \xi_i  = k_1,    \sum_{i \in  I^{(b)}_{- \psi \ell, 2 } } \xi_i  = k_2 \Bigg\}; \\
& \Phi^{(b)}(n) = \bigcup_{\substack{ k_1 + k_2 = n }} \Phi^{(b)} (k_1, k_2).
\end{flalign}

\end{defn}

The following two definitions provide certain operators on the space of functions on particle configurations and an auxiliary flow. Similar definitions appeared in \cite[Section 7.2]{bourgade2013eigenvector}.

\begin{defn}
	
	\label{k1k2operators}

Fix integers $k_1, k_2 \ge 0$. For integers $a,b  \ge 0$, we define operators $\flat^{(b)}_a = \operatorname{Flat}^{(b)}_{a; k_1, k_2}$ and $\av^{(b)} = \text{Av}^{(b)}_{k_1, k_2}$ on the space of functions $f\colon \mathbb{N}^N \rightarrow \mathbb{C}$ as follows. For each particle configuration $\bm \xi \in \N^N$ and function $f\colon \mathbb{N}^N \rightarrow \mathbb{C}$, set 
\begin{align} \label{d:flatdef}
 \big(\flat^{(b)}_a (f) \big)(\bm \xi) &=
\left\{
	\begin{array}{ll}
		f(\bm \xi),  & \mbox{if } \supp \bm \xi \subset I^{(b)}_a, \\
		A (k_1, k_2), & \text{otherwise};
	\end{array}
\right.\\
\av^{(b)}(f)&= \fa^{-1} \sum_{a = 1}^{\fa} \flat^{(b)}_a (f) \label{sumfa}.
\end{align}

\end{defn}

\begin{defn}
	
	\label{gsxi}

	Adopting the notation of \Cref{k1k2operators}, we define the flow $g_s (\bm \xi) = g_s^{(b)} (\bm \xi) = g^{(b)}_s (\bm \xi; k_1, k_2)$ for $s \ge t_0$ by 
	\beq \label{d:gdef}
	\partial_s g_s = \mathcal S(s) g_s,\quad \text{with initial data} \quad g_{t_0} (\bm \xi) = (\av^{(b)} h_{t_0} )(\bm \xi).
	\eeq
	
	\noindent For each $s \ge t_0$, let $\widetilde { \bm \xi} = \widetilde { \bm \xi}(s)  =  \widetilde { \bm \xi}(s; k_1, k_2) = \widetilde { \bm \xi}^{(b)} (s; k_1, k_2) \in \N^N$ denote a maximizing particle configuration for $g_s^{(b)}$:
	\beq\label{e:maximizer}
	g^{(b)}_s (\widetilde{\bm \xi}) = \max_{\bm \xi \in \Omega^{(b)} (k_1, k_2) }  g^{(b)}_s(\bm \xi; k_1, k_2) .
	\eeq
	When there are multiple maximizers, we pick one arbitrarily (in a way such that $\widetilde{\bm \xi} (s)$ remains piecewise constant in $s$).

\end{defn}

\subsection{Proof of \Cref{ftatqxi} } \label{s:dichoproof}

To establish \Cref{ftatqxi}, we begin with the following lemma providing bounds on $A (\textbf{q}, \bm \xi)$ (recall \eqref{atqk}).

\bel\label{l:aux3} 

For any integer $m \ge 0$, there exists a constant $C = C(m) > 0$ such that the following holds with overwhelming probability with respect to $\mathcal M$ for sufficiently small $\mathfrak{c} = \mathfrak{c} (m) > 0$. First, for any particle configuration $\bm \xi \in \mathbb{N}^N$ with $m = \mathcal N (\bm \xi)$ particles such that $\supp \bm \xi \subset \big[ (1/2 - c_1) N, (1/2 + c_1) N \big]$, we have that
\beq
\label{e:Abound}
 \big| A (\textbf{\emph{q}}, \bm \xi) \big| \le  C \psi^m.
\eeq
\noindent Second, for any integers $k_1, k_2, b \ge 0$ with $k_1 + k_2 \le m$ and $b \le m + 1$, we have that
\begin{flalign} 
\label{axiak1k2} 
 \displaystyle\max_{b \in [0, m + 1]}   \max_{ \bm \xi \in \Omega^{(b)}(k_1,k_2)} \big| A (\textbf{\emph{q}}, \bm \xi) - A (k_1, k_2) \big| < C N^{- 3 \fd}.
\end{flalign} 
\eel

\begin{proof}
	
	Recalling the definition \eqref{atqk} of $A$, the denominator  $\Im m_\alpha ( \gamma_{i_j} + \mathrm{i} \eta )$ \eer of each $A (\textbf{q}, i_j)$ is bounded below by a uniform constant  by \eqref{e:mabounds}.
	Moreover, the numerator of each $A (\textbf{q}, i_j)$ is bounded above by $C \psi$ with overwhelming probability by the second part of \Cref{l:goodset}. Together these estimates yield \eqref{e:Abound}. 

	To establish \eqref{axiak1k2}, observe by \eqref{e:secondconclusion},  
	 \eqref{c1c2t0} and \eqref{e:parameters} that there exists a constant $c > 0$ such that, for any $\delta > 0$; $j \in \{ 1, 2 \}$; and $k \in I^{(b)}_{\widetilde{d} + \psi \ell,j}$, we have with overwhelming probability that
	\begin{flalign}
	\label{rqestimate1}
	\Big| \big\langle \textbf{q}, \textbf{R} ( t, \widehat \gamma_k + \mathrm{i} \eta) \textbf{q} \big\rangle - \big\langle \textbf{q}, \textbf{R} ( t, \widehat \gamma_{i_j}  + \mathrm{i} \eta) \textbf{q} \big\rangle \Big| \lesssim_{m, \delta} \displaystyle\frac{\psi^4}{\sqrt{N\eta}} + N^{\delta} \bigg( N^{-c} + \displaystyle\frac{\big| \widehat \gamma_k  - \widehat \gamma_{i_j}  \big|  + \eta \eer}{t - \tau} \bigg).
	\end{flalign}
	
	\noindent  To bound the right side of \eqref{rqestimate1}, we first note that by \eqref{e:newrigidity}, there exists a constant $c' > 0$ such that for indices $i,j$ with  $| i - N/2 | < c' N$ and $| j- N/2 | < c' N$,
	\beq\label{equalspace} | \widehat \gamma_i - \widehat \gamma_j | \le |\widehat \gamma_i - \gamma_i(t) | + |\widehat \gamma_j - \gamma_j(t) | + | \gamma_i(t) - \gamma_j(t) | \le 2\eta + CN^{-1} | i - j|.
	\eeq
	In the last inequality we used \eqref{e:newrigidity}, the fact that $N^{\delta - 1/2} \le \eta$ for $\delta \le \mathfrak{c}$, and the fact that the density $\dsc(t,x)$ satisfies $c \le \dsc(t,x) \le C$ for $|x| \le C^{-1}$. The latter fact is \cite[Lemma 3.2]{landon2017convergence}, whose hypotheses are satisfied in this case by the first inequality in \eqref{mn0z} and the first inequality in \eqref{e:mabounds}. We further observe using \eqref{equalspace} that for $k \in I^{(b)}_{\widetilde{d} + \psi \ell,j}$, 
	\begin{align}\label{e:imitate}
	\big| \widehat \gamma_{i_j} - \widehat \gamma_k  \big| & \dom   N^{-1} |i_j - k| + \eta 
	 \lesssim  2 ( 10 b\widetilde d + 3 \widetilde d) N^{-1} +  \eta
	 \le 30 b \widetilde{d} N^{-1} + \eta \le 3 1  b N^{3 \mathfrak{d}} \eta, 
	\end{align}
	
	\noindent where in the last inequality we used the fact that  $\widetilde{d} = \lfloor \ell \psi^{5m} N^{\mathfrak{d}} \rfloor$ (recall \eqref{e:ddef}), where $\ell = \psi^{4m + 1} N^{1 + \mathfrak{d}} \eta$ and $\mathfrak{d} = 50 (m + 1) \mathfrak{c}$ (recall \eqref{e:ell}). Further using the facts that $\eta \gg N^{-1 / 2}$ and $t - \tau = N^{7 \fd} \psi \eta$ (recall \eqref{e:taudefine}), it follows from \eqref{rqestimate1}, after taking $\mathfrak{c} = \mathfrak{c} (m) > 0$ sufficiently small, that with overwhelming probability 
	\begin{flalign}
	\label{rqestimate2} 
	\Big| \big\langle \textbf{q}, \textbf{R} ( t, \widehat \gamma_k  + \mathrm{i} \eta) \textbf{q} \big\rangle - \big\langle \textbf{q}, \textbf{R} ( t, \widehat \gamma_{i_j}  + \mathrm{i} \eta) \textbf{q} \big\rangle \Big| \lesssim_m N^{-4 \mathfrak{d}}.
	\end{flalign}

We now note that by \eqref{e:rhoacontinuity}, $\varrho_\alpha( x) > c$ for a constant $c>0$ and all $x$ in a neighborhood of zero that contains all $\gamma^{(\alpha)}_k$ such that $k \in I^{(b)}_{\widetilde{d} + \psi \ell,j}$. The definition \eqref{gammaalphai} then implies that for $j, k \in I^{(b)}_{\widetilde{d} + \psi \ell,j}$, $\big |\gamma^{(\alpha)}_j  - \gamma^{(\alpha)}_k | \le C N^{-1} |j - k|$. Using this fact along with \eqref{e:macontinuity} implies that for $k  \in I^{(b)}_{\widetilde{d} + \psi \ell,j}$,
\beq\label{e:denomconverge}
 \big| \Im m_\alpha ( \gamma_{i_j}  + \iu \eta ) -  \Im m_\alpha ( \gamma_{k }  + \iu \eta ) \big| \le C | i_j - k |/N + C \eta \le 3 1  b N^{3 \mathfrak{d}} \eta \ll 1,
\eeq
by the same calculation as in \eqref{e:imitate}.

	Thus, the bound \eqref{axiak1k2} follows from \eqref{rqestimate2}, \eqref{e:Abound}, and \eqref{e:denomconverge}.
\end{proof}

The following lemma, which we will establish in \Cref{s:proofofcrefaux1} below, essentially states that the difference $g_s (\widetilde{\bm \xi}) - A (k_1, k_2)$ is either nearly negative or its derivative is bounded by a negative multiple of itself. Here, we recall the intervals $\Phi^{(b)} (k_1, k_2)$ from \Cref{intervalsomegab}.

\bel

\label{l:aux1} 

Fix integers $b, n, k_1, k_2 \ge 0$ such that $n \le m$, $b \le m +1$, and $k_1 + k_2 = n$. If $\mathfrak{c} = \mathfrak{c} (m) > 0$ (recall \eqref{e:etadefine}) is chosen small enough, then there exist constants $C=C(b,n) > 0$ and $c=c(b,n) >0$ such following holds with overwhelming probability with respect to $\mathcal M$. Fix a realization of $\X_0$ and an associated eigenvalue trajectory $\bm\lambda = \big( \bm \lambda(s) \big)_{t_0 \le s \le t}$. There exists a countable subset $\mathcal{C} = \mathcal C ( \X_0, \bm \lambda) \subset [t_0, t]$ such that, for $s \in [t_0, t] \setminus \mathcal{C}$, the continuous function $g^{(b)}_s (\widetilde{\bm \xi} \big(s; k_1, k_2) ; k_1, k_2\big)$ is differentiable and satisfies either
\beq
\label{e:nothingtoprove1} 
g^{(b)}_s (\widetilde{\bm \xi})  - A (k_1, k_2) \le N^{-1},
\eeq or
\begin{flalign}
\begin{aligned}
\label{e:ultimate}
\partial_s \big (g^{(b)}_s (\widetilde{\bm \xi})  - A (k_1, k_2) \big ) & \le \frac{C}{\eta} \bigg( \psi \displaystyle\max_{\bm \xi}  \Big| h_s(\bm \xi) - A \big( \chi^{(b+1)}_1 (\bm \xi), \chi^{(b+1)}_2 (\bm \xi) \big) \Big| + N^{-\fd} \bigg) \\
& \qquad  - \frac{c}{\eta} \big(g^{(b)}_s(\widetilde {\bm \xi}) - A (k_1, k_2) \big),
\end{aligned}
\end{flalign}

\noindent where the maximum in \eqref{e:ultimate} is taken over all $\bm \xi \in \Phi^{(b+1)}(k_1 -1 , k_2) \cup \Phi^{(b+1)}(k_1, k_2 - 1)$.

 \eel
 	
 Given \Cref{l:aux1}, we can now establish \Cref{ftatqxi}. 
 
 \begin{proof}[Proof of \Cref{ftatqxi}.]

 	First observe that, for any $n\le m$ and $\bm \xi \in  \Phi^{(m-n+1)}(n)$, we have  
 	\begin{align}
 	\big| h_t(\bm \xi) - g_t^{(m-n+1)}(\bm \xi) \big| &\le \Big| \big(\mathcal  U_{\mathcal B}(t_0, t) h_{t_0}  - \mathcal U_{\mathcal S}(t_0,t) h_{t_0} \big) (\bm \xi) \Big| + \big| \mathcal U_{\mathcal S}(t_0, t)(h_{t_0} - \av^{(m-n+1)} h_{t_0} ) (\bm \xi) \big|
 	\\ &\dom_m \psi^{4m} N \ell^{-1} (t - t_0) + e^{-c \psi} \label{fgestimate}.
 	\end{align}
 	
 	\noindent Here, the first term from \eqref{fgestimate} follows from \Cref{l:shortrange}, where the containment $\supp \bm \xi \subset \big[ (1/2 - c) N, (1/2 + c) N \big]$ holds since $i_1, i_2 \in \big[ (1/2 - c_1) N, (1/2 + c_1) N \big]$. The second term in \eqref{fgestimate} follows from \eqref{e:finitespeed}, which applies due to the facts that $\big( h_{t_0} - \av^{(m-n+1)} h_{t_0} \big) (\bm \xi') = 0$ whenever $\supp \bm \xi' \subseteq I^{(m-n+1)}_0$ and that $\widetilde{d}_n (\bm \xi, \bm \xi') >  \psi \ell$ for any $\bm \xi \in \Phi^{(m-n+1)} (n)$ and $\supp \bm \xi' \not\subset I^{(m-n+1)}_0$.

 	We now define a discretization of the interval $[t_0, t]$ by 
 	\beq 
 	t_k = t_0 + k m^{-1} \psi \eta, \quad \text{for}\quad 0 \le k \le m,
 	\eeq 
 	
 	\noindent and we will show for each integer $n \in [0, m]$ that, with overwhelming probability,
\begin{equation}
\label{e:inductionstep}
\displaystyle\sup_{s \in [t_n, t]} \max_{\bm \xi \in   \Phi^{(m - n + 1)}(n) }  \Big| h_s(\bm \xi) - A \big(\chi^{(m - n + 1)}_1 (\bm \xi), \chi^{(m - n + 1)}_2 (\bm \xi) \big) \Big|  \dom_n  \frac{\psi^n }{ N^\fd}.
\end{equation}

\noindent  Given the $n = m$ case of \eqref{e:inductionstep} and using the facts that $\bm \zeta \in \Phi^{(1)} (m)$ and $t_m = t$, we obtain with overwhelming probability that
\begin{equation}
 \Big| h_t (\bm \zeta) - A \big(\chi^{(1)}_1 (\bm \zeta), \chi^{(1)}_2 (\bm \zeta) \big) \Big|  \dom_m  \frac{\psi^m }{ N^\fd}.
\end{equation}

\noindent Our conclusion, \eqref{mxiestimate}, then follows from the facts that $\chi^{(1)}_1 (\bm \zeta) = j_1 = \zeta_{i_1}$ and $\chi^{(1)}_2 (\bm \zeta) = j_2 = \zeta_{i_2}$; and our choices $\psi = N^{\fc}$ and $\fd = 50 (m + 1) \fc$.

So, it suffices to prove \eqref{e:inductionstep} and therefore the two estimates 
\begin{flalign}
\label{fsxiestimate}
& \displaystyle\sup_{s \in [t_n, t]} \max_{\bm \xi \in   \Phi^{(m - n + 1)}(n) }  \Big( h_s(\bm \xi) - A \big(\chi^{(m - n + 1)}_1 (\bm \xi), \chi^{(m - n + 1)}_2 (\bm \xi) \big) \Big)  \dom_n  \frac{\psi^n }{ N^\fd}; \\
& \displaystyle\sup_{s \in [t_n, t]} \max_{\bm \xi \in   \Phi^{(m - n + 1)}(n) }  \Big( A \big(\chi^{(m - n + 1)}_1 (\bm \xi), \chi^{(m - n + 1)}_2 (\bm \xi) \big) - h_s(\bm \xi) \Big)  \dom_n  \frac{\psi^n }{ N^\fd}. \label{fsxiestimate2}
\end{flalign}

\noindent To do this, we induct on $n \in [0, m]$. The base case $n = 0$ is trivial, since $\bm \xi \in \Phi^{(m+1)} (0)$ implies that $h_s (\bm \xi) = 1 = A \big(\chi^{(m+1)}_1 (\bm \xi), \chi^{(m+1)}_2 (\bm \xi) \big)$. For the induction step, we assume the induction hypothesis \eqref{e:inductionstep} holds for $n-1$ and prove \eqref{fsxiestimate} and \eqref{fsxiestimate2} for $n$. 

We will in fact only establish \eqref{fsxiestimate}, as the proof of \eqref{fsxiestimate2} is entirely analogous (by in what follows replacing the maximizer $\widetilde{\bm \xi}$ of $g^{(b)}$ with the minimizer). To that end, for any two fixed integers $k_1, k_2 \ge 0$ with $k_1 + k_2= n$, it suffices to show that
\beq
\label{gsestimate} 
\displaystyle\sup_{s \in [t_{n }, t]} \big( g^{(m - n + 1)}_s (\widetilde{\bm \xi}; k_1, k_2)  -  A (k_1, k_2)  \big)  \dom_n \frac{\psi^n}{N^\fd}
\eeq
 
 \noindent holds with overwhelming probability, where we have abbreviated $\widetilde{\bm \xi} = \widetilde{\bm \xi}^{(m - n + 1)} (s; k_1, k_2)$. Indeed, given \eqref{gsestimate}, \eqref{fsxiestimate} follows upon letting $(k_1, k_2)$ range over all pairs of integers summing to $n$; the fact that $\widetilde{\bm \xi}$ maximizes $g^{(m - n + 1)}$ over $\Omega^{(m -n + 1)} (k_1, k_2)$; the fact that $\Phi^{(m - n + 1)} (k_1, k_2) \subseteq \Omega^{(m - n + 1)} (k_1, k_2)$; and \eqref{fgestimate}.

To establish \eqref{gsestimate}, we first apply the $b = m - n + 1$ case of \Cref{l:aux1}. Since \eqref{e:nothingtoprove1} implies \eqref{gsestimate}, we may assume that \eqref{e:ultimate} holds. Then the induction hypothesis \eqref{e:inductionstep} (whose $n$ is equal to $n - 1$ here); the fact that $\bm \xi \in \Phi^{(m - n + 2)} (k_1 - 1, k_2) \cup \Phi^{(m - n + 2)} (k_1, k_2 - 1)$ implies $\bm \xi \in  \Omega^{(m - n + 1)} (n - 1)$ (since $10 \widetilde{d} > 2 \psi \ell$); and \eqref{e:ultimate} together yield that the bound 
\begin{equation}
\partial_s \big( g^{(m - n + 1)}_s(\widetilde{\bm \xi}; k_1, k_2) - A (k_1, k_2 ) \big) \le  C( m,n)\frac{\psi^{n} }{N^\fd \eta} - \frac{c(m,n)}{\eta} \big( g^{(m - n + 1)}_s(\widetilde{\bm \xi}; k_1, k_2) -  A (k_1, k_2 ) \big), 
\end{equation}

\noindent holds for all $s \in [t_{n - 1}, t] \setminus \mathcal{C}$ (for some countable subset $\mathcal{C}$) with overwhelming probability. 

In particular, if we define $F \colon[t_0 ,t ] \rightarrow \mathbb R$ by
\beq
\label{fs}
F(s) = F_{m, n; k_1, k_2} (s) = g_s^{(m - n + 1)} (\widetilde{\bm \xi}; k_1, k_2)  -  A (k_1, k_2),
\eeq

\noindent then there exist constants $c = c (m,n) > 0$ and $C = C(m,n) > 0$ such that
\beq\label{e:diffineq}
\partial_s \left( F(s) - C \frac{\psi^n}{N^\fd}  \right) \le - \frac{c}{\eta} \left( F(s) - C \frac{\psi^n}{N^\fd} \right), \qquad \text{for each $s \in [t_{n - 1}, t] \setminus \mathcal{C}$}. 
\eeq

\noindent Thus, integration and the fact that  $t_n - t_{n-1} = \frac{\psi \eta}{m}$ together yield for $s \in [t_n, t]$ that
\begin{flalign}
\label{fsestimate}
F(s) & \le \exp \bigg( - \displaystyle\frac{c}{\eta} (s - t_{n - 1}) \bigg) \left( F(t_{n-1}) - C \frac{\psi^n}{N^\fd} \right) + C \frac{\psi^n}{N^\fd} \le \exp \Big( - \displaystyle\frac{c \psi}{m} \Big)  F(t_{n-1})   + C \frac{\psi^n}{N^\fd}.
\end{flalign}  

\noindent To bound $\big| F (t_{n - 1}) \big|$, observe that 
\begin{flalign}
\label{ftn1}
\big| F(t_{n - 1}) \big| \le \big\| g_{t_{n - 1}}^{(m - n + 1)} \big\|_{\infty} + \big| A (k_1, k_2)\big| \le \big\| g_{t_0}^{(m - n + 1)} \big\|_\infty + C(m) \psi^m \le  C^m N^{m / 2} + C(m) \psi^m.
\end{flalign}

\noindent Here, to deduce the first inequality, we used the definition \eqref{fs} of $F$. To deduce the second, we used the fact that $\big\| g_s^{(b)} \big\|_{\infty} \le \big\| g_{s'}^{(b)} \big\|_{\infty}$ whenever $s' \le s$ (since $\mathcal{S}$ is the generator of a Markov process) and \eqref{e:Abound}. To deduce the third inequality, we used \eqref{d:gdef} and \eqref{e:observable2}.

Then \eqref{fs}, \eqref{fsestimate}, and \eqref{ftn1} together imply \eqref{gsestimate}, from which we deduce the proposition. 
\end{proof}

\subsection{Proof of \Cref{l:aux1}}

\label{s:proofofcrefaux1}

We first establish \Cref{l:aux1} assuming \eqref{e:thethirdtermbounded} below; the latter will be proven as \Cref{l:aux2}. Throughout this section, for $s \in [t_0, t]$ we occasionally abbreviate $\{ \lambda_j \}_{1 \le j \le N} = \big\{ \lambda_j(s) \big\}_{1 \le j \le N}$.

\begin{proof}[Proof of \Cref{l:aux1}]

The differentiability of $g_s(\widetilde{\bm \xi})= g^{(b)}_s (\widetilde{\bm \xi}; k_1,k_2)$ follows from the general fact that the maximum of finitely many differentiable functions on an interval $I$ is itself differentiable, away from a countable set $\mathcal{C}$. Thus, for any fixed $s \in [t_0, t] \setminus \mathcal{C}$, it remains to upper bound $g_s (\widetilde{\bm \xi})  - A(k_1, k_2)$ and its derivative. To that end, we may assume that  
\beq\label{e:nothingtoprove} 
g_s (\widetilde{\bm \xi})  - A(k_1, k_2) > N^{-1},
\eeq

\noindent for otherwise \eqref{e:nothingtoprove1} would hold. In this case, we set $\widetilde{\bm \xi} = (\widetilde{\xi}_1, \widetilde{\xi}_2, \ldots , \widetilde{\xi}_N)$ and use \eqref{sfs} to write
\beq\label{e:max1}
\partial_s \big (g_s (\widetilde{\bm \xi})  - A(k_1, k_2) \big ) = \mathcal S(s) g_s(\widetilde{\bm \xi})  =\sum_{ 0 < |j - k| \le \ell} c_{jk} (t) 2 \widetilde \xi_j ( 1 + 2 \widetilde \xi_k) \big( g_s(\widetilde{\bm \xi}^{jk}) - g_s (\widetilde{\bm \xi}) \big).
\eeq

Now let $\supp \widetilde{\bm \xi} = \{ j_1, j_2, \ldots , j_h \}$. We claim that 
\beq
\label{gsxigsxi}
g_s(\widetilde{\bm \xi}^{j_p k}) \le  g_s(\widetilde {\bm \xi}), \quad \text{for any integers $p \in [1, h]$ and $k \in [j_p - \ell, j_p + \ell]$}.
\eeq

\noindent To see this first observe that, since $\widetilde{\bm \xi}$ maximizes $g_s$ over $ \Omega(k_1, k_2) = \Omega^{(b)}(k_1, k_2)$, \eqref{gsxigsxi} holds if $\widetilde{\bm \xi}^{j_p k} \in \Omega(k_1, k_2)$. So, let us assume instead that $\widetilde{\bm \xi}^{j_p k} \notin \Omega(k_1, k_2)$, meaning that there exists some $v \in \{ 1,  2 \}$ such a particle originally at site $j_p \in I_{\widetilde{d} + \psi \ell, v}^{(b)}$ in $\widetilde{\bm \xi}$ jumped out of the interval $I_{\widetilde{d} + \psi \ell, v}^{(b)}$.  This implies that $k \notin I^{(b)}_{\widetilde d + \psi \ell}$, since the particle can jump at most $\ell$ sites by \eqref{gsxigsxi}, but the disjoint intervals $I_{\widetilde{d} + \psi \ell, 1}^{(b)}$ and  $I_{\widetilde{d} + \psi \ell, 2}^{(b)}$ are at least $\tilde d \gg \ell$ sites apart by hypothesis.

Then, \eqref{d:flatdef} and \eqref{sumfa} together yield $( \av h_{t_0} )  (\bm \xi) - A (k_1, k_2) = 0$ unless $\supp \bm \xi \subseteq I_{\widetilde{d}}^{(b)}$. Thus, any particle configuration $\bm \xi \in \mathbb{N}^N$ in the support of $\av h_{t_0} -  A(k_1, k_2)$ must satisfy $\widetilde{d}_n \big( \bm \xi, \widetilde{\bm \xi}^{j_p k} \big) \ge  \psi \ell$. Hence, the finite speed of propagation estimate \eqref{e:finitespeed} yields 
\beq 
\big| g_s(\widetilde{\bm \xi}^{j_p k}) - A(k_1, k_2) \big| = \Big| \mathcal U_{\mathcal S} (t_0, s)  \big( ( \av h_{t_0} )  -  A(k_1, k_2) \big) (\widetilde{\bm \xi}^{j_p k}) \Big| \dom_n \exp \Big( - \displaystyle\frac{c \psi}{2} \Big) < \displaystyle\frac{1}{N},
\eeq
 
\noindent which contradicts \eqref{e:nothingtoprove}. 

We now set $z_{j_p} = \lambda_{j_p} + \iu \eta$ and use \eqref{gsxigsxi}, the definition \eqref{cij} of the $c_{ij}$, and the fact that $\widetilde{\xi}_j (2 \widetilde{\xi}_k + 1) \ge 1$ when $\widetilde{\bm \xi}^{j_p k} \ne \widetilde{\bm \xi}$ to bound the sum in \eqref{e:max1} over $j$ by
\begin{flalign} 
\eqref{e:max1} \le \displaystyle\sum_{p = 1}^h \sum_{0 < | k - j_p| \le \ell}  \frac{g_s(\widetilde{\bm \xi}^{j_p k}) - g_s(\widetilde {\bm \xi})}{ N(\lambda_{j_p} - \lambda_k)^2 }   \le &  \frac{1}{N} \displaystyle\sum_{p = 1}^h \sum_{0 < | k - j_p| \le \ell} \frac{g_s(\widetilde{\bm \xi}^{j_p k}) - g_s(\widetilde {\bm \xi})}{(\lambda_{j_p} - \lambda_k)^2 + \eta^2}  \\
 =&  \frac{1}{N\eta} \displaystyle\sum_{p = 1}^h \sum_{0 < | k - j_p| \le \ell}  \bigg( \Im \frac{g_s(\widetilde {\bm \xi }^{j_p k}) }{z_{j_p} - \lambda_k} - \Im \frac{A(k_1, k_2) }{z_{j_p} - \lambda_k} \bigg) \label{sumkgsxi} \\
 &  - \frac{1}{N\eta} \big( g_s (\widetilde{\bm \xi}) - A(k_1, k_2) \big) \displaystyle\sum_{p = 1}^h \sum_{0 < | k - j_p| \le \ell}  \Im \frac{1}{z_{j_p} - \lambda_k} \label{e:359}.
\end{flalign}

\noindent Since the first bound in \eqref{e:xtdispersion} and the fact that $N^{-1} \ell \gg \eta$ yields
\beq
\displaystyle\sum_{p = 1}^h \sum_{0 < | k - j_p| \le \ell}\Im \frac{1}{z_{j_p} - \lambda_k} = \displaystyle\sum_{p = 1}^h \sum_{0 < | k - j_p| \le \ell} \frac{\eta}{(\lambda_{j_p} - \lambda_k)^2 + \eta^2} \ge \sum_{p=1}^h \sum_{k:|\lambda_k - \lambda_{j_p} | \le \eta} \frac{\eta}{2\eta^2} \ge c N,\eeq

\noindent we have that 
\beq\label{e:bound1}
\eqref{e:359} \le - \displaystyle\frac{c}{\eta} \big(g_s (\widetilde{\bm \xi}) - A(k_1, k_2) \big).
\eeq 

\noindent To bound \eqref{sumkgsxi}, we fix $p \in [1, h]$, recall $\mathcal{B}$ from \eqref{e:evmf}, and employ the decomposition 
\begin{flalign}
\label{e:theterm2}
\frac{1}{N} & \sum_{0  < |k - j_p| \le \ell}  \bigg( \Im \frac{g_s(\widetilde {\bm \xi }^{j_p k}) }{z_{j_p} - \lambda_k} - \Im \frac{A(k_1, k_2) }{z_{j_p} - \lambda_k} \bigg) \\
&\label{e:thefirstterm} \quad = \frac{1}{N} \Im \sum_{0  < |k - j_p| \le \ell}  \frac{ \big( {\mathcal U}_{\mathcal S} (t_0, s) \av^{(b)} h_{t_0} \big) ( \widetilde {\bm \xi}^{j_p k} )  -  \big(\av^{(b)} {\mathcal U}_{\mathcal S} (t_0, s) h_{t_0} \big) ( \widetilde{\bm \xi}^{j_p k} ) }{z_{j_p} - \lambda_k}  \\
&\label{e:thesecondterm} \qquad + \frac{1}{N} \Im \sum_{0  < |k - j_p| \le \ell} \frac{ \big( \av^{(b)} {\mathcal U}_{\mathcal S} (t_0, s) h_{t_0} \big) ( \widetilde{\bm \xi}^{j_p k} )  - \big(  \av^{(b)} \mathcal U_{\mathcal B} (t_0 , s) h_{t_0} \big) ( \widetilde{\bm \xi}^{j_p k} \big)  }{z_{j_p} - \lambda_k}  \\
&\label{e:thethirdterm} \qquad + \frac{1}{N} \Im \sum_{0  < |k - j_p| \le \ell} \bigg(  \frac{\big( \av^{(b)} \mathcal U_{\mathcal B} (t_0 , s) h_{t_0} \big) ( \widetilde{\bm \xi}^{j_p k} )  }{z_{j_p} - \lambda_k}  - \Im \frac{A(k_1, k_2) }{z_{j_p} - \lambda_k} \bigg).
 \end{flalign}
The terms \eqref{e:thefirstterm} and \eqref{e:thesecondterm} may be bounded as in the content following \cite[(3.64)]{bourgade2017eigenvector}:\footnote{When bounding \eqref{e:thefirstterm}, $I^{(b)}_a$ plays the role of the interval $[b_1 - a, b_2 +a]$ in \cite{bourgade2017eigenvector}.}
\beq\label{e:bound2}
\eqref{e:thefirstterm} \dom_n \frac{\psi^{n+1} \ell}{\widetilde d}  , \quad \eqref{e:thesecondterm} \dom_n \frac{\psi^{n} N (t - t_0)}{\ell}.
\eeq
For brevity we only prove here the second inequality in \eqref{e:bound2} and refer the reader to \cite{bourgade2017eigenvector} for details on the first. Using \Cref{l:shortrange} (which applies as $\supp \widetilde{\bm \xi}^{j_p k} \subseteq \big[ (1/2 - c) N, (1/2 + c) N \big]$, since $\supp \widetilde{\bm \xi} \subseteq \big[ (1/2 - c_1) N, (1/2 + c_1) N \big]$ and $|k - j_p| \le \ell$), we find
\begin{multline}
\left|  \big( \av^{(b)} {\mathcal U}_{\mathcal S} (t_0, s) h_{t_0} \big) ( \widetilde{\bm \xi}^{j_p k} )  - \big(  \av^{(b)} \mathcal U_{\mathcal B} (t_0 , s) h_{t_0} \big) ( \widetilde{\bm \xi}^{j_p k} \big)  \right| \\ 
 \le \Big|  \big( \mathcal U_{\mathcal S}(t_0,s)h_{t_0} - \mathcal U_{\mathcal B}(t_0, s)h_{t_0} \big) \big( \widetilde{\bm \xi}^{j_p k} \big)  \Big| \dom_n  \frac{\psi^{n} N (t -t_0)}{ \ell},
\end{multline}
which implies the second bound in \eqref{e:bound2}.

Next, as \Cref{l:aux2} below, we show that, for any fixed $p \in [1, h]$ and $s \in [t_0, t] \setminus \mathcal{C}$,
\beq\label{e:thethirdtermbounded}
\eqref{e:thethirdterm}  \dom_n  \psi \Big| h_s(\widetilde {\bm \xi} \setminus j_p )  - A \big( \chi^{(b+1)}_1 ( \widetilde {\bm \xi} \setminus j_p), \chi^{(b+1)}_2 ( \widetilde {\bm \xi} \setminus j_p) \big)  \Big| + N^{-\fd}.
\eeq
Combining these bounds and using the choices of $t_0$ from \eqref{e:taudefine}; $\ell$ from \eqref{e:ell}; and $\widetilde{d}$ from \eqref{e:ddef}, we obtain for fixed $p \in [1, h]$ and $s \in [t_0, t] \setminus \mathcal{C}$ that \eqref{e:theterm2} satisfies
\begin{flalign}
\frac{1}{N} & \sum_{ 0  < | k - j_p | \le \ell } \bigg( \Im \frac{g_s(\widetilde {\bm \xi }^{j_p k}) }{z_{j_p} - \lambda_k} - \Im \frac{A(k_1, k_2) }{z_{j_p} - \lambda_k} \bigg) \\
& \qquad \dom_n \max_{\bm \xi \in \Omega^{(b+1)}(k_1 -1 , k_2) \cup \Omega^{(b+1)}(k_1, k_2 - 1)}   \psi \Big| h_s(\bm \xi) - A \big( \chi^{(b+1)}_1 (\bm \xi), \chi^{(b+1)}_2 (\bm \xi) \big) \Big|  + N^{-\fd}.
\end{flalign}

\noindent Summing over $p \in [1, h]$, inserting this into \eqref{sumkgsxi}, and using \eqref{e:bound1} then completes the proof.
\end{proof}

We conclude this section by establishing \eqref{e:thethirdtermbounded}.

\bel\label{l:aux2}
 Retain the hypotheses and notation of the proof of  \Cref{t:dynamics}. Then equation \eqref{e:thethirdtermbounded} holds.
\eel
\begin{proof}
From complete delocalization \eqref{e:deloc}, we have with overwhelming probability that 
\beq
\label{fxiestimate}
 \displaystyle\max_{p \in [1, h]} h_s(\widetilde{\bm \xi}^{j_p k} ) \dom_n \psi^n.
\eeq

\noindent Now, for any particle configuration $\bm \xi \in \mathbb N^N$, define $a_{\bm \xi} = a_{\bm \xi; s} \in [0,1]$  through the equation
\beq 
\av(f)(\bm \xi) =  a_{\bm \xi} h_s (\bm \xi) + (1 - a_{\bm \xi} ) A (k_1, k_2) \qquad \text{if $h_s (\bm \xi) \ne A( k_1, k_2 )$},
\eeq

\noindent  and set $a_{\bm \xi} = 0$ if $h_s (\bm \xi)  = A( k_1, k_2 )$. Since $\mathcal U_{\mathcal{B}} (t_0, s) h_{t_0} = h_s$, the first term of \eqref{e:thethirdterm} equals  
\begin{flalign}
\begin{aligned} 
\label{e:91}
 \frac{1}{N} \Im \sum_{0 < |k - j_p| \le \ell} & \frac{a_{\widetilde {\bm \xi}^{j_p k}} h_s (\widetilde{\bm \xi}^{j_p k}) + (1 - a_{\widetilde {\bm \xi}^{j_p k}} ) A( k_1, k_2 )}{z_{j_p} - \lambda_k} \\
 & = \frac{1}{N}\Im \sum_{0 < |k - j_p| \le \ell} \frac{a_{\widetilde {\bm \xi}} h_s (\widetilde{\bm \xi}^{j_p k})  + (1 - a_{\widetilde {\bm \xi}} ) A( k_1, k_2 )}{z_{j_p} - \lambda_k}  \\ 
 & \qquad + \frac{1}{N}\Im \sum_{0 < |k - j_p| \le \ell} \displaystyle\frac{ (a_{\widetilde {\bm \xi}^{j_p k}}- a_{\widetilde {\bm \xi}}) \big( h_s (\widetilde{\bm \xi}^{j_p k}) - A(k_1, k_2) \big)}{z_{j_p} - \lambda_k} \\ 
 & = \frac{1}{N}\Im \sum_{0 < |k - j_p| \le \ell} \frac{a_{\widetilde {\bm \xi}} h_s (\widetilde{\bm \xi}^{j_p k})  + (1 - a_{\widetilde {\bm \xi}} ) A( k_1, k_2 )  }{z_{j_p} - \lambda_k} + O_n \left( \frac{ \ell \psi^{n } }{\widetilde d} \right).
 \end{aligned}
\end{flalign}

\noindent In the last line we used \eqref{e:Abound}; \eqref{fxiestimate}; the fact that $\Im \sum_{0 < |k - j_p| \le \ell} (z_{j_p} - \lambda_k)^{-1} \le \Im m_N(s,z_{j_p}) \le C$, which follows from the first part of \Cref{l:goodset}, \eqref{mszestimate}, and \eqref{e:mabounds}; and the bound 
\beq
\label{e:aetadiff}
\big| a_{\widetilde {\bm \xi}} - a_{\widetilde {\bm \xi}^{j_p k}} \big| \le \displaystyle\frac{d( \widetilde {\bm \xi}, \widetilde {\bm \xi}^{j_p k} )}{\widetilde{d}} \le \displaystyle\frac{\ell}{\widetilde d},
\eeq

\noindent which follows from the definition of $a_{\widetilde {\bm \xi}}$ and the definition of the $\av$ operator, since the sums defining $\av(f)(\widetilde {\bm \xi})$ and $\av(f)(\widetilde {\bm \xi}^{j_p k})$ can differ in at most $\ell$ terms. The equation \eqref{e:91} implies
\beq\label{e:intermediatestep}
\eqref{e:thethirdterm} = \frac{a_{\widetilde {\bm \xi}} }{N} \sum_{0 < |k - j_p| < \ell} \left( \frac{\eta h_s(\widetilde{\bm \xi}^{j_p k} ) }{(\lambda_{j_p} - \lambda_k)^2 + \eta^2} -  \frac{\eta A( k_1, k_2 ) }{(\lambda_{j_p} - \lambda_k)^2 + \eta^2} \right) + O_n \left( \frac{ \ell \psi^{n} }{\widetilde d} \right).
\eeq

\noindent Through \eqref{e:xtdispersion} and a dyadic decomposition analogous to the one used in the proof of \Cref{l:dyadic} (see also the proof of \cite[Lemma 3.5]{bourgade2017eigenvector} for more details), one has with overwhelming probability that
\beq
\label{sumlambdajp}
 \quad \frac{1}{N} \sum_{|k - j_p| > \ell} \frac{\eta}{(\lambda_{j_p} - \lambda_k)^2 +\eta^2} \le \frac{ C N \eta}{\ell},
\eeq

\noindent which by \eqref{fxiestimate} implies with overwhelming probability
\beq\label{e:intermediatestep2}
\left| \frac{1}{N}\sum_{|k - j_p| > \ell} \frac{\eta h_s (\widetilde { \bm \xi }^{j_p k})}{(\lambda_{j_p}  - \lambda_k)^2 + \eta^2} \right| \le \frac{C N \eta \psi^n}{\ell}.
\eeq
Also,
\beq\label{e:intermediatestep3}
\frac{1}{N}\sum_{k = 1}^N \frac{\eta}{(\lambda_{j_p} - \lambda_k)^2 + \eta^2}  = \Im \sum_{k = 1}^N \frac{1}{z_{j_p} - \lambda_k} = \Im m_N(s, z_{j_p}).
\eeq
We conclude from \eqref{e:intermediatestep}, \eqref{sumlambdajp}, \eqref{fxiestimate}, \eqref{e:intermediatestep2}, and \eqref{e:intermediatestep3} that with overwhelming probability
\begin{equation}\label{e:363}
\eqref{e:intermediatestep}\le \frac{a_{\widetilde {\bm \xi}}}{N} \sum_{k=1 }^N \frac{\eta h_s(\widetilde {\bm \xi}^{j_p k})}{(\lambda_{j_p} - \lambda_k)^2 + \eta^2} - a_{\widetilde {\bm \xi}} A( k_1, k_2 )    \Im m_N(s, z_{j_p}) + O_n \left( \frac{ \psi^n }{N\eta} + \frac{\psi^n N \eta}{\ell} + \frac{\ell \psi^{n}}{\widetilde d} \right),
\end{equation}
where we also used \eqref{e:Abound} and \eqref{fxiestimate} to restore the term with index $k = j_p$ in the sum, which accrues an error of size $O \left({ \psi^n }/{N\eta}\right)$.

By \eqref{rijestimate} and \eqref{mszestimate}, there exists a constant $c>0$ such that, with overwhelming probability,

\begin{flalign}
\big|  \Im m_N (s, z_{j_p}) - & \Im m_\alpha ( \gamma_{j_p}+ \iu \eta) \big|  \\
 \quad &\le    \big| \Im m_N\big(s, z_{j_p}\big) - \Im m_{\alpha} ( z_{j_p}) \big| + | \Im m_{\alpha} ( z_{j_p} ) - \Im m_{\alpha} ( \gamma_{j_p} + \iu \eta ) \big|    \\
&= \big| \Im m_{\alpha} ( z_{j_p}  ) - \Im m_{\alpha} ( \gamma_{j_p} + \iu \eta ) \big| + O\left( N^{-c} \right).
\label{e:input1}
\end{flalign}
\eer

\noindent  Moreover, by rigidity estimate from the first part of \Cref{l:goodset}, the combination of \eqref{e:gammacompare} and \eqref{e:newrigidity}, and \eqref{e:macontinuity}, 
we have that \eer
\begin{flalign}\label{e:input2}
\Big| \Im m_{\alpha} \big( z_{j_p}  \big) - \Im m_{\alpha} \big( \gamma_{j_p} + \iu \eta \big) \Big| \le C \big| \lambda_{j_p} (s) - \gamma_{j_p}  \big|  + C \eta\eer \le C \left( \displaystyle\frac{\psi}{N} + N^{-c}   + \eta \eer \right) \le C N^{-c},
\end{flalign}

\noindent Thus, \eqref{e:input1} and \eqref{e:input2} yield
\begin{flalign}
\Big| \Im m_N\big( & s, z_{j_p}\big) - \Im m_\alpha ( \gamma_{j_p}+ \iu \eta) \Big| \le C N^{-c}, 
\end{flalign}

\noindent which, together with \eqref{e:Abound}, yields 
\beq 
\Big|  A( k_1, k_2 )    \Im m_N\big(s, z_{j_p}\big)  -   A( k_1, k_2 )  \Im m_\alpha ( \gamma_{j_p}+ \iu \eta)  \Big| \lesssim_n \psi^n N^{-c}
\eeq
with overwhelming probability. Therefore \eqref{e:363} implies
\begin{flalign}\label{e:364}
\eqref{e:thethirdterm}  & \le \frac{a_{\widetilde {\bm \xi}}}{N} \sum_{k=1 }^N \frac{\eta h_s(\widetilde {\bm \xi}^{j_p k})}{\big( \lambda_{j_p}(s) - \lambda_k \big)^2 + \eta^2} - a_{\widetilde {\bm \xi}} A( k_1, k_2 )    \Im m_\alpha ( \gamma_{j_p}+ \iu \eta) \\ & \qquad + O_n\left( \frac{ \psi^{n} }{N\eta} + \frac{\psi^{n} N \eta}{\ell} + \frac{\ell \psi^{n}}{\widetilde d}  + N^{-\fd} \right)
\end{flalign}
holds with overwhelming probability, where we used that $\psi^n N^{-c} = O( N^{-\fd})$ if $\fc$ is chosen sufficiently small (depending only on the value $m$ from \Cref{l:aux1}).  Using the definition of $h_s$ from \eqref{e:observable3} (and recalling from \eqref{e:Qdef} that $a(2j) = (2j - i)!!$), we \eer find that the first term on the right side of \eqref{e:364} is equal to
\begin{flalign}
a_{\widetilde {\bm \xi}} &  \displaystyle\sum_{k = 1}^N \E \left[ \Bigg(  \prod_{1 \le q \le h } \frac{\Big(N \big\langle \q , \u_{j_q} (s) \big\rangle^2 \Big)^{\widetilde{\xi}_q -  \one_{p=q}}}{a\big(2(\widetilde{\xi}_q - \one_{p=q})\big)  } \Bigg) \bigg( \frac{\eta \big\langle \q , \u_k (s) \big\rangle^2 }{(\lambda_{j_p} - \lambda_k)^2 + \eta^2 } \bigg) \displaystyle\frac{a \big( 2 ( \widetilde{\xi}_k ) \big)} {a \big( 2 ( \widetilde{\xi}_k + 1) \big)} \; \Bigg|\;  \bm \lambda, \mathcal F_{t_0} \right] \\
& \le a_{\widetilde {\bm \xi}} \E \left[ \Bigg(  \prod_{1 \le q \le h } \frac{\Big(N \big\langle \q , \u_{j_q} (s) \big\rangle^2 \Big)^{\widetilde{\xi}_q -  \one_{p=q}}}{a\big(2(\widetilde{\xi}_q - \one_{p=q})\big)  } \Bigg) \bigg( \sum_{k = 1}^N \frac{\eta \big\langle \q , \u_k (s) \big\rangle^2 }{(\lambda_{j_p} - \lambda_k)^2 + \eta^2 } \bigg) \; \Bigg|\;  \bm \lambda, \mathcal F_{t_0} \right].  \label{e:whycondition}
\end{flalign}

 We now  have with overwhelming probability that
\begin{align}
 \sum_{k= 1}^N \frac{\eta \big\langle \q , \u_k(s) \big\rangle^2 }{(\lambda_{j_p} - \lambda_k)^2 + \eta^2 } &=  \Im \Big\langle \q , \mathbf R\big(s , \lambda_{j_p}(s)  + \iu \eta\big) \q \Big\rangle  \\
 & \label{e:resolventconstant}= \Im \big\langle \q , \mathbf R(t_0 , \widehat \gamma_{j_p} (s)  + \iu \eta) \q \big\rangle + O_{n, \fc} \Bigg(  \frac{\psi^4}{\sqrt{N\eta}}  +  N^\fc  \bigg( \frac{t - t_0 }{t_0 - \tau} + N^{-c}  \bigg)  \\ & \qquad \qquad \qquad \qquad \qquad \qquad \qquad \qquad  + N^\fc \bigg( N^{-c}  + \frac{\psi N^{-1}  + 2\eta} {t_0 - \tau} \bigg)   \Bigg).
\end{align}
In the last equality, we used \eqref{rt1t2estimate}, \eqref{e:secondconclusion}, $N \eta \gg N^{1/2}$, $s \in [t_0, t]$, and the overwhelming probability estimate 
\beq
\big| \lambda_{j_p}(s) - \widehat \gamma_{j_p} (s) \big| \le \big| \lambda_{j_p}(s) - \gamma_{j_p}(s) \big| + \big| \gamma_{j_p}(s) -  \widehat \gamma_{j_p} (s)\big| \le \frac{\psi}{N} + \eta ,
\eeq
which follows from the rigidity estimate in the first item in \Cref{l:goodset} and \eqref{e:newrigidity} (with $\eta \gg N^{-1/2}$). By \eqref{e:deloc} and the fact that $\psi = N^{\mathfrak{c}}$, this yields
\begin{align}\label{e:whycondition2}
\eqref{e:whycondition} &\le a_{\widetilde {\bm \xi}} \E \left[ \Bigg(  \prod_{1 \le q \le h } \frac{\Big(N \big\langle \q , \u_{j_q} (s) \big\rangle^2 \Big)^{\widetilde{\xi}_q -  \one_{p=q}}}{a\big(2(\widetilde{\xi}_q - \one_{p=q})\big)  } \Bigg)  
\Im \Big\langle \q , \mathbf R \big( t_0 , \widehat \gamma_{j_p} (s)  + \iu \eta \big) \q \Big\rangle \; \Bigg|\;  \bm \lambda, \mathcal F_{t_0} \right] \\  
& \quad + O_{n, \fc} \Bigg(  \frac{\psi^{n + 4}}{\sqrt{N\eta}}  +  \psi^{n + 1}  \bigg( \frac{t - t_0 }{t_0 - \tau} + N^{-c} + \frac{\psi N^{-1}  + 2\eta} {t_0 - \tau} \bigg)   \Bigg).
\end{align}

We now recognize that the second factor inside the expectation on the right side of \eqref{e:whycondition2} is measurable with respect to $\mathcal F_{t_0}$. We may therefore factor it out of the expectation and rewrite the previous bound as 
\begin{flalign}
\eqref{e:whycondition} &\le  a_{\widetilde {\bm \xi}} h_s( \widetilde{\bm \xi}\setminus j_p ) \Im \Big\langle \q , \mathbf R \big( t_0 , \widehat \gamma_{j_p} (s)  + \iu \eta \big) \q \Big\rangle\\
 & \quad + O_{n, \fc} \Bigg(  \frac{\psi^{n + 4}}{\sqrt{N\eta}}  +  \psi^{n + 1}  \bigg( \frac{t - t_0 }{t_0 - \tau} + N^{-c}  \bigg)  + \frac{\psi N^{-1}  + 2\eta} {t_0 - \tau} \bigg)   \Bigg).
\end{flalign}	
Using again the computation \eqref{e:resolventconstant} with $s=t$, and \eqref{fxiestimate} yields
\begin{flalign}
\eqref{e:whycondition} &\le a_{\widetilde {\bm \xi}} h_s( \widetilde{\bm \xi}\setminus j_p ) \Im \Big\langle \q , \mathbf R \big( t, \widehat \gamma_{j_p} (s) + \iu \eta \big) \q \Big\rangle \\
& \quad + O_{n, \fc} \Bigg(  \frac{\psi^{n+4}}{\sqrt{N\eta}}  +  \psi^{n+1}  \bigg( \frac{t - t_0 }{t_0 - \tau} +   N^{-c}  + \frac{\psi N^{-1}  + 2 \eta} {t_0 - \tau} \bigg)   \Bigg).
\end{flalign}

\noindent This, together with the definition \eqref{atqk} of $A (\q , j_p)$ and \eqref{e:364}, gives
\begin{flalign}
\label{e:bound3}
\eqref{e:thethirdterm}  & \le  a_{\widetilde {\bm \xi}} \Im m_\alpha ( \gamma_{j_p}+ \iu \eta)   \left( A(\q , j_p) h_s(\widetilde {\bm \xi} \setminus j_p )  - A(k_1, k_2)  \right) \\ 
& \quad  + O_{n, \fc} \Bigg(  \frac{\psi^{n+4}}{\sqrt{N\eta}}  +  \psi^{n+1}  \bigg( \frac{t - t_0 }{t_0 - \tau} +   N^{-c}  + \frac{\psi N^{-1}  + 2\eta} {t_0 - \tau} \bigg)   \Bigg) \\
& \quad +  O_{n, \fc} \left( \frac{ \psi^{4} }{N\eta} + \frac{\psi^{n} N \eta}{\ell} + \frac{\ell \psi^{n}}{\widetilde d} + N^{-\fd} \right).
\end{flalign}

Recalling that in \eqref{e:etadefine}, \eqref{e:ell}, \eqref{e:taudefine}, and \eqref{e:ddef}, we fixed small $\fd (m) >0$ such that $\fd = 50 ( 1 + m) \fc$ (recall $\psi  = N^\fc)$ and chose parameters so that $ N^{-1/2} \ll \eta \ll \tau \le  t_0 \le t$: 
\begin{equation}
\eta = N^{-\mathfrak a}\psi,  \quad \ell = \psi^{5m +1} N^{1+ \fd}\eta,  \quad t_0 =  t -   \psi \eta, \quad \tau = t -  N^{7 \fd} \psi \eta  , \quad \widetilde d = \ell \psi^{5 m} N^\fd.\end{equation}
Then choosing  $\fc$ 
sufficiently small, we deduce from \eqref{e:bound3} that
\begin{equation}\label{e:bound4}
\eqref{e:thethirdterm}  \le  a_{\widetilde {\bm \xi}} \Im m_\alpha ( \gamma_{j_p}+ \iu \eta)  \left( A(\q , j_p) h_s(\widetilde {\bm \xi} \setminus j_p )  - A(k_1, k_2)  \right) +  O_n\left(  N^{-\fd} \right).
\end{equation}

To complete the argument, it suffices to show that 
\begin{multline}\label{e:sufficestoshow}
a_{\widetilde {\bm \xi}} \Im m_\alpha ( \gamma_{j_p}+ \iu \eta)   \left( A_t(\q , j_p) h_s(\widetilde {\bm \xi} \setminus j_p )  - A_t(k_1, k_2)  \right)  \\\le  C \psi \left| h_s(\widetilde {\bm \xi} \setminus j_p )  - A_t\big(\chi^{(b+1)}_1 (\widetilde {\bm \xi} \setminus j_p), \chi^{(b+1)}_2 (\widetilde {\bm \xi} \setminus j_p)\big)  \right|  + O_n( N^{-\fd}).
\end{multline}
We recall that, for $j_p \in \big[ (1/2  -{c}) N, (1/2 + {c}) N \big]$,  there exists $C>0$ such that $\big| \Im m_\alpha ( \gamma_{j_p}+ \iu \eta)  \big| < C$, which holds by  \eqref{e:mabounds}\eer. This, together with \eqref{axiak1k2}, and the definition \eqref{atqk} of $A(\mathbf q, \mathbf{\bm \xi})$, we obtain
\begin{flalign}
&\Im m_\alpha ( \gamma_{j_p}+ \iu \eta)     \left( A(\q , j_p) h_s(\widetilde {\bm \xi} \setminus j_p )  - A(k_1, k_2)  \right)  
\\ &= \Im m_\alpha ( \gamma_{j_p}+ \iu \eta)   \left( A(\q , j_p) h_s(\widetilde {\bm \xi} \setminus j_p )  - A\big( \mathbf q, \widetilde{\bm  \xi}\big)  \right)  + O_n \left( N^{-\fd}  \right)\\ 
&=  \Im m_\alpha ( \gamma_{j_p}+ \iu \eta)  A(\q , j_p)   \left( h_s(\widetilde {\bm \xi} \setminus j_p )  - A( \mathbf q, \widetilde{\bm  \xi} \setminus j_p)  \right)  + O_n \left( N^{-\fd}  \right)
\\ &=  \Im m_\alpha ( \gamma_{j_p}+ \iu \eta)  A(\q , j_p)  \left( h_s(\widetilde {\bm \xi} \setminus j_p )  - A\big(\chi^{(b+1)}_1 (\widetilde {\bm \xi} \setminus j_p), \chi^{(b+1)}_2 (\widetilde {\bm \xi} \setminus j_p)\big) + O_n \left( N^{-2 \fd}  \right)  \right)  \\ & \qquad \qquad + O_n \left( N^{-\fd}  \right)
.
\end{flalign}
Combining the last line with \eqref{e:Abound} and using the bound $\big| \Im m_\alpha ( \gamma_{j_p}+ \iu \eta)  \big| < C$ again, we see
\begin{multline}
 \Im m_\alpha ( \gamma_{j_p}+ \iu \eta)  \left( A(\q , j_p) h_s(\widetilde {\bm \xi} \setminus j_p )  - A(k_1, k_2)  \right)  \\ \le  C \psi \left| h_s(\widetilde {\bm \xi} \setminus j_p )  - A\big(\chi^{(b+1)}_1 (\widetilde {\bm \xi} \setminus j_p), \chi^{(b+1)}_2 (\widetilde {\bm \xi} \setminus j_p)\big)  \right|  + O_n(N^{-\fd}).
\end{multline}
Then \eqref{e:sufficestoshow} follows because $|a_{\widetilde {\bm \xi}}| \le 1$. 
\end{proof}
\subsection{Outline of the proof of \Cref{t:dynamics} in the general case}
\label{OneInterval}
\label{ProofZeta2}

Previously, we assumed when defining the interval $I_a$ in \eqref{e:twosites} that $| \supp \bm \zeta |  = 2$. Consider now the general case where $| \supp \bm \zeta |  = n'$ for $n' \ge 1$. Set $\supp \bm \zeta = \{ i_1, i_2, \ldots , i_{n'} \}$, with $i_1 < i_2 < \cdots < i_{n'}$, recall $m = \mathcal{N} (\bm \zeta)$, and define
\begin{equation}
 I^{(b)}_a ( \bm \zeta) = \bigcup_{j=1}^{n'} I^{(b)}_{a,j}, \quad \text{where for all $1 \le j \le n'$}, \quad I^{(b)}_{a,j} = [ i_j - 10b \tilde d - a, i_j + 10b \tilde d + a]
\end{equation}

\noindent under the assumption that all the intervals $I^{(m)}_{2 \tilde d,j}$ are disjoint; we will describe the appropriate definition when this is not the case below. We further set, for each $j \in [1, n']$ and particle configuration $\bm \xi \in \mathbb{N}^N$, 
\begin{flalign}
\chi^{(b)}_j (\bm \xi) = \sum_{i \in  I^{(b)}_{\widetilde d + \psi \ell, j } } \xi_i.
\end{flalign}

\noindent For any integer $n' \ge 1$ and $n'$-tuple $\textbf{k} = (k_1, k_2, \dots, k_{n'})$ of nonnegative integers, we define 
\begin{flalign}
& \Omega^{(b)}(\textbf{k}) = \Big\{ \bm \xi \in \mathbb{N}^N: \supp \bm \xi \subset I^{(b)}_{\widetilde d + \psi \ell},   \chi^{(b)}_j (\bm \xi)  = k_j \text{ for } j \in [1, n']  \Big\}; \\
& \Omega^{(b)}(n) = \bigcup_{\substack{ |\textbf{k}| = n }} \Omega (\textbf{k}); \qquad A (\textbf{k}) =  \prod_{j=1}^{n'} A ( \q , i_j)^{k_j}, 
\end{flalign}

\noindent where $|\textbf{k}| = \sum_{j = 1}^{n'} k_j$. We also define the restricted intervals
\begin{flalign}
& \Phi^{(b)}(\textbf{k}) = \Big\{ \bm \xi \in \mathbb{N}^N: \supp \bm \xi \subset I^{(b)}_{ -  \psi \ell},   \sum_{i \in  I^{(b)}_{- \psi \ell, j } } \xi_i= k_j \text{ for } j \in [1, n'] \Big\}; \\
& \Phi^{(b)}(n) = \bigcup_{\substack{|\textbf{k}| = n }} \Phi^{(b)} (\textbf{k}).
\end{flalign}

 The operator $\operatorname{Flat}_{a; \textbf{k}}$ is then defined as in \eqref{d:flatdef}, except with $A(k_1, k_2)$ there replaced by $A (\textbf{k})$ here. Given this change, $\av$ is defined as in \eqref{sumfa}. Additionally define the flow $g_s (\bm \xi) = g_s^{(b)} (\bm \xi) = g_s^{(b)} (\bm \xi; \textbf{k})$ as in \eqref{d:gdef}, and also the maximizer $\widetilde{\bm \xi} = \widetilde{\bm \xi} (s) = \widetilde{\bm \xi} (s, \textbf{k}) = \widetilde{\bm \xi}^{(b)} (s, \textbf{k}) \in \N^N$ by
	\beq\label{e:maximizer2}
	g_s (\widetilde{\bm \xi})   = \max_{\bm \xi \in \Omega (\textbf{k}) }  g^{(b)}_s(\bm \xi; \textbf{k}) .
	\eeq
Now the argument then proceeds as before. Specifically, the dichotomy in \Cref{l:aux1} becomes that $g^{(b)}_s (\widetilde{\bm \xi} )$ satisfies either
\beq
\label{e:nothingtoprove12} 
g^{(b)}_s (\widetilde{\bm \xi})  - A (\textbf{k}) \le \frac{1}{N},
\eeq or
\begin{flalign}
\label{e:ultimate2}
\partial_s \big (g^{(b)}_s (\widetilde{\bm \xi})  - A (\textbf{k}) \big ) & \le \frac{C}{\eta} \bigg( \psi \displaystyle\max_{\bm \xi}  \Big| h_s(\bm \xi) - A \big( \chi^{(b+1)}_1 (\bm \xi), \dots, \chi^{(b+1)}_{n'} (\bm \xi) \big) \Big| + N^{-\fd} \bigg) \\
& \qquad  - \frac{c}{\eta} \big(g^{(b)}_s(\widetilde {\bm \xi}) - A (\textbf{k}) \big),
\end{flalign}
where the maximum in \eqref{e:ultimate2} is taken over $\bm \xi \in \bigcup_{1\le j \le n'} \Phi^{(b+1)}(k_1 , \dots, k_j -1, \dots, k_{n'})$. The proof of this claim is the same as the one in \Cref{s:proofofcrefaux1}, and the proof of the main result given this claim is the same as in \Cref{s:dichoproof}.

When the $I^{(m)}_{2 \tilde d,j}$ are not disjoint, we instead partition $\bigcup_{j=1}^{n'} I^{(m)}_{2 \tilde d  ,j}$ into a union of disjoint intervals $\widehat I ^{(m)}_l$ as follows. There exist an integer $v \in [1, n']$ and indices $1 \le j_1 <  j_2 < \ldots < j_v \le n' $ such that the intervals 
\beq
\widehat I^{(m)}_u = \bigcup_{j= j_u}^{j_{u+1} -1 } I^{(m)}_{2 \tilde d, j}
\eeq

\noindent are mutually disjoint over all $u \in [1, v]$ (where we set $j_{v + 1} = n' + 1$), but such that $I^{(m)}_{2 \tilde d, j} \cap I^{(m)}_{2 \tilde d, j + 1}$ is nonempty for each $j \in [j_u, j_{u + 1} - 2]$. We then can make the above definitions using instead the intervals
\begin{equation}
J^{(b)}_{a,l} = [ i_{j_l} - 10 b \tilde d  - a,  i_{j_{l+1} - 1 } + 10b \tilde d + a ],
\end{equation}
which are disjoint for all $a \in [ 0 , 2 \tilde d]$ (since the $\widehat{I}_u^{(m)}$ are).  For instance, we set $\chi^{(b)}_j (\bm \xi) = \sum_{i \in  J^{(b)}_{\widetilde d + \psi \ell, j } } \xi_i$, and
\begin{flalign}
& \Omega^{(b)}(\textbf{k}) = \Big\{ \bm \xi \in \mathbb{N}^N: \supp \bm \xi \subset J^{(b)}_{\widetilde d + \psi \ell},   \chi^{(b)}_j (\bm \xi)  = k_j \text{ for } j \in [1, v]  \Big\},
\end{flalign}

\noindent and similarly for the other intervals and quantities. 

Let us motivate this procedure by very briefly considering the case $|\supp \bm \zeta| = 2$, with $\supp \bm \zeta = \{ i_1, i_2 \}$, as in the material after \eqref{e:twosites}. However, we now suppose that $|i_2 - i_1| \le 20 m \widetilde{d} + 2 \widetilde{d}$, so that the intervals defined in \eqref{iba1iba2} are not disjoint. Then, according to the above, we instead work on the single connected interval 
$J_{a, 1}^{(b)} = [ i_1 - 10 b \tilde d  - a,  i_{2}  + 10b \tilde d + a ]$. 
Then $\av$ is defined as in \eqref{sumfa}, and we observe that \Cref{l:aux3} holds with \eqref{axiak1k2} replaced by the inequality
\begin{flalign} 
\label{axiak1k2one} 
 \displaystyle\max_{b \in [0, m + 1]}   \max_{ \bm \xi \in \Omega^{(b)}(k)} \big| A (\textbf{q}, \bm \xi) - A (k) \big| < C N^{- 3 \fd},
\end{flalign} 
so that $ A (\textbf{q}, \bm \xi)$, up to a small error, does not depend on $\bm \xi$ for  ${ \bm \xi \in \Omega^{(m+1)}(k)}$. Additionally, it is permissible to apply the finite speed of propagation estimate as in the material immediately following \eqref{gsxigsxi}, which would not be the case if we retained two disjoint but nearby or overlapping intervals and attempted the original argument (since then a particle could jump from one interval to the other). The same reasoning underlies the argument in the general case.
\eer

\section{Scaling limit} \label{s:scaling}

In \Cref{s:dynamics} we identified the moments of $\mathbf X_t$ through entries of the resolvent $\textbf{R} (t, z)$. Here, we determine the scaling limit of these entries, as $N$ tends to infinity. In \Cref{s:xtaulaw}, we recall some preliminary material from previous works. In \Cref{s:tightness} we compute the scaling limits of the moments $\E \big[ \Im \rstar (E + \iu \eta)^p \big]$ for $p\in \N$, as $\eta$ tends to $0$, and establish \Cref{p:tightness} as a consequence. In \Cref{s:aetascaling} we compute the scaling limits of the moments $\E \big[ \Im R_{ii} (E + \iu \eta)^p \big]$, as $N$ tends to $\infty$, and prove \Cref{t:main1}. Throughout this section, we recall $t$ from \eqref{t}.

	\subsection{Order parameter for $\X_t$} 
	
	\label{s:xtaulaw}
	
	We now discuss a certain order parameter, which is essentially given by the $\frac{\alpha}{2}$-th moment of (linear combinations of the imaginary and real parts of) $\rstar$. In what follows, for any $u, h \in \mathbb{C}$, we recall from  \cite[Section 5.1]{bordenave2013localization} the inner product 
	\beq 
	h.u = (\Re u)h + (\Im u) \bar h.
	\eeq
		
	\bed\label{d:gstar}  For any $z\in \mathbb{H}$ and $u \in \mathbb{C}$, we define $\gstar(u)\colon \mathbb C \rightarrow \mathbb C$ by
	\beq
	\gstar(u) = \Gamma\left( 1- \frac{\alpha}{2} \right) \E\left[  \big( - \iu \rstar(z).u \big)^{\alpha/2}  \right].
	\eeq
	\eed
	

	The following lemma establishes a lower bound on $\Re \gamma^\star_z$ and the existence of a limit for $\gamma^\star_z$ as $\Im z$ tends to $0$. It will be proved in \Cref{s:appendixA} below.
	
	\bel\label{l:gammalimitlemma} 
	
	There exists a constant $c>0$ such that the following two statements hold. First, we have the uniform lower bound
	\begin{equation}\label{e:starclaim2}
	\displaystyle\inf_{\substack{z \in \mathbb{H} \\ |z| \le c}} \inf_{u \in \mathbb S^1_+} \Re \gstar(u) > c.
	\end{equation}
	
	\noindent Second, for every real number $E \in [-c, c]$, there exists a function $\gamma^\star_E \colon \mathbb{S}^1_+ \rightarrow \mathbb{C}$ such that the following holds. Let $\{ E_j \}_{j \ge 1}$ and $\{ \eta_j \}_{j \ge 1}$ denote sequences of real numbers such that $\lim_{N \rightarrow \infty} E_N = E$ and $\lim_{N \rightarrow \infty} \eta_N = 0$. Then, denoting $z_N = E_N + \iu \eta_N$, we have 
	\beq
	\label{e:starclaim1}
	\lim_{N \rightarrow \infty} \sup_{u \in \mathbb S^1_+} \big| \gamma_{z_N}^\star (u) - \gamma^\star_E (u) \big| = 0.
	\eeq
	
	\eel

	We next recall from \cite[(7.37)]{aggarwal2018goe} notation for a particular analog of $\gstar$ for finite $N$ that will be useful for us. For any real number $s \ge 0$ and index set $\mathcal I \subset \N \cap [ 1, N]$, let $\R^{(\mathcal I)}(s, z) = \big( \textbf{X}_s - z \big)^{-1} = \big\{ R_{ij} (s, z) \big\}$ denote the resolvent of $\X_s^{(\mathcal I)}$, which we define as the matrix $\X_s$ but whose rows and columns with indices in $\mathcal I$ set to zero.
	
	\bed\label{d:orderparameter} 
	
	Fix a real number $s \ge 0$. For any index set $\mathcal I \subset \N \cap [ 1 , N ]$ and complex number $z\in \mathbb{H}$, the function $\gamma^{(\mathcal I)}_z(u)\colon \mathbb{K}^+ \rightarrow \mathbb C$ is defined by
	\begin{equation}
	\label{gammazi}
	\gamma^{\left(\mathcal I\right)}_z (u) = \gamma^{\left(\mathcal I\right)}_{z; s} (u) = \Gamma\left( 1 - \frac{\alpha}{2} \right) \frac{1}{N - \left| \mathcal I   \right|} \sum_{\substack{1 \le k \le N\\ k \notin \mathcal I}} \big( - \iu R^{ (\mathcal I)}_{kk}(s, z).u\big)^{\alpha/2} \frac{|g_k|^\alpha}{\E \big[ |g_k|^\alpha \big]},
	\end{equation}
	
	\noindent where $\textbf{g} = (g_1, g_2, \ldots , g_N)$ is a vector of i.i.d. standard Gaussian random variables\footnote{These Gaussian variables will be useful in \eqref{zjkgk} below, when applying a Hubbard--Stratonovich type transform.} independent from $\textbf{X}_s$. If $\mathcal I = \varnothing$, we abbreviate $\gamma_z = \gamma_z^{(\mathcal{I})}$. 
	
	\eed

	We next have the following local law stating that $\mathbb{E} \big[ \gamma_z (u) \big] \approx \gstar(u)$. It is a consequence of \cite[Theorem 7.6]{aggarwal2018goe}, where the $\Omega_z$ there is equal to $\gstar$ here by \cite[Lemma 4.4]{bordenave2017delocalization}.
	
	\bel[{\cite[Theorem 7.6]{aggarwal2018goe},\cite[Lemma 4.4]{bordenave2017delocalization}}]
	
	\label{l:hatlocal} 
	
	There exist constants $K > 0$ and $C = C (\delta) > 0$ such that the following holds. Fix a real number $\delta > 0$ with $\delta < \max \big\{ \frac{(b - 1/\alpha) ( 2- \alpha)}{20}, \frac{1}{2} \big\}$, and abbreviate $\widetilde{\mathcal{D}} = \widetilde{\mathcal{D}}_{K, \delta}$ (recall \eqref{d2}). Then, for any $s \in [0, t]$, we have
	\begin{flalign}
	\label{gammazuestimate} 
	\displaystyle\sup_{z \in \widetilde{\mathcal{D}}} \sup_{u\in \mathbb S^1_+ } \Big| \E \big[ \gamma_z( u) \big] - \gstar(u) \Big| \le C N^{-\alpha \delta/8},
	\end{flalign}
	
	\noindent  where expectation is taken with respect to both $\textbf{\emph{X}}_s$ and the Gaussian variables $g_k$.
	\eel

	Like \Cref{l:Xlocallaw}, \cite[Theorem 7.6]{aggarwal2018goe} was only stated in the case $s = 0$ in \cite{aggarwal2018goe}, but it is quickly verified that the same proof applies for arbitrary $s \in [0, t]$, especially since $\textbf{H} + s^{1 / 2} \textbf{W}$ satisfies the conditions in \Cref{momentassumption} for $s \in [0, t]$ if $\textbf{H}$ does. 
	
	We next have the following lemma, which can be viewed as an analog of \Cref{d:rde} for finite $N$. In what follows, we recall from \Cref{momentassumption} that there exist random variables $\{ Z_{ij} \}_{1 \le i, j \le }$ that are mutually independent (up to the symmetry condition $Z_{ij} = Z_{ji}$) and have the following properties. First, each $Z_{ij}$ has law $N^{-1 / \alpha} Z$, where $Z$ is $\alpha$-stable; second, each $N^{1 / \alpha} (H_{ij} - Z_{ij})$ is symmetric and has finite variance. 
	
	The first and second bounds in \eqref{estimatesjjrjj} below are consequences of \cite[Proposition 7.11]{aggarwal2018goe} and \cite[Proposition 7.9]{aggarwal2018goe}, respectively. 
	
	\begin{lem}[{\cite{aggarwal2018goe}}]
		
		\label{sjjrjjestimate}

		Define the $\{ Z_{ij} \}$ and, for each integer $j \in [1, N]$, set
		\begin{equation}
		\label{sjj}
		S_{jj} = - \Bigg( z +  \displaystyle\sum_{k \ne j} Z^2_{jk} R^{(j)}_{kk} \Bigg)^{-1}.
		\end{equation}
		
		\noindent Then, with overwhelming probability we have the bounds 
		\begin{flalign}
		\label{estimatesjjrjj}
		& \displaystyle\max_{1 \le j \le N} \E\big[ |S_{jj} - R_{jj}|  \big] \le \frac{ C (\log N)^C }{(N \eta^2)^{\alpha/8}}, \qquad \displaystyle\max_{1\le j \le N} \big\{ |R_{jj}|,  |S_{jj}| \big\} \le C (\log N)^C.
		\end{flalign}

	\end{lem}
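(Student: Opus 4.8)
Although \Cref{sjjrjjestimate} is quoted from \cite{aggarwal2018goe}, we sketch the strategy. The plan is to apply the Schur complement formula to $\X_t$ and show that the resulting self-consistent expression for $R_{jj}$ agrees, up to the stated error, with the one defining $S_{jj}$. Writing $\R = \R(t,z)$, letting $\R^{(j)} = \R^{(\{j\})}(t,z)$ be the resolvent of the minor $\X_t^{(\{j\})}$, and setting $\mathbf v_j = \big( (\X_t)_{jk} \big)_{k \ne j}$, the Schur complement identity gives
\beq\label{e:schSjj}
- R_{jj}^{-1} = z - (\X_t)_{jj} + \sum_{k \ne j} (\X_t)_{jk}^2 R^{(j)}_{kk} + \sum_{\substack{k, l \ne j \\ k \ne l}} (\X_t)_{jk} (\X_t)_{jl} R^{(j)}_{kl},
\eeq
while by definition $- S_{jj}^{-1} = z + \sum_{k \ne j} Z_{jk}^2 R^{(j)}_{kk}$. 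The goal is therefore to show that the right side of \eqref{e:schSjj} differs from $-S_{jj}^{-1}$ by a quantity bounded by $(\log N)^C (N\eta^2)^{-\alpha/8}$, both with overwhelming probability and (after handling rare bad events crudely) in $L^1$; passing to reciprocals with a priori bounds on $|R_{jj}|$ and $|S_{jj}|$ then produces both estimates in \eqref{estimatesjjrjj}.

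First I would discard the diagonal entry. We have $(\X_t)_{jj} = X_{jj} + \sqrt t\, w_{jj}$, where $X_{jj}$ vanishes outside an event of probability $O(N^{-\alpha b})$ by \eqref{probabilityxij} — on which $|R_{jj}| \le \eta^{-1}$ crudely by \eqref{kij} — and where $\sqrt t\,|w_{jj}| \dom \sqrt{t/N}$ with overwhelming probability; both contributions are negligible. Next I would condition on $\X_t^{(\{j\})}$, which is independent of $\mathbf v_j$ and, being again a $b$-removed $\alpha$-L\'evy matrix perturbed by a GOE, satisfies the local law \Cref{l:Xlocallaw}; thus $\max_{k \ne j}|R^{(j)}_{kk}| \le (\log N)^C$ with overwhelming probability, and the Ward identity \eqref{kijsum} gives $\sum_{l \ne j}|R^{(j)}_{kl}|^2 = \eta^{-1}\Im R^{(j)}_{kk} \dom \eta^{-1}(\log N)^C$. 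Conditionally on $\R^{(j)}$ the last sum in \eqref{e:schSjj} has mean zero, and controlling it is the heart of the matter: the heavy tails rule out standard concentration for the full quadratic form, so I would truncate the entries $(\X_t)_{jk}$ at a scale $N^{-\vartheta}$ for suitably small $\vartheta>0$, note via \eqref{probabilityxij} and a Chernoff bound that only $O(N^{\alpha\vartheta})$ entries of row $j$ exceed the threshold with overwhelming probability, bound the contribution of products containing a large entry using $|R^{(j)}_{kl}| \le (\log N)^C$ and the Ward identity, and bound the purely small-entry off-diagonal part by its conditional second moment, which is $\dom N^{-2\vartheta}\sum_{k,l}|R^{(j)}_{kl}|^2 \dom N^{1-2\vartheta}\eta^{-1}(\log N)^C$; a suitable $\vartheta$ makes this $\dom (\log N)^C (N\eta^2)^{-c}$ for some $c>0$, and Chebyshev's inequality converts the second-moment bound into control in probability and in $L^1$.

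It then remains to replace $(\X_t)_{jk}^2$ by $Z_{jk}^2$ in the diagonal sum. Here I would use the coupling of \Cref{momentassumption}, under which $X_{jk}$ is the $b$-removal of $Z_{jk} + J_{jk}$ with $J_{jk}$ symmetric of finite variance, together with the variance-matching choice of $t$ in \eqref{t}. Expanding $(\X_t)_{jk}^2 = X_{jk}^2 + 2\sqrt t\, X_{jk} w_{jk} + t\, w_{jk}^2$, the relevant difference $\sum_{k \ne j}\big( (\X_t)_{jk}^2 - Z_{jk}^2 \big) R^{(j)}_{kk}$ splits into: the removed small part $-\sum_{k:\, X_{jk}=0} Z_{jk}^2 R^{(j)}_{kk}$, which is cancelled up to a small error by $t\sum_{k\ne j} w_{jk}^2 R^{(j)}_{kk}$ because the two have matching conditional means by \eqref{t} and both concentrate around them; the finite-variance corrections from $2 Z_{jk} J_{jk} + J_{jk}^2$ on the $O(N^{1-\alpha b})$ indices $k$ with $X_{jk} \ne 0$, which are negligible since $\E[J^2] < \infty$; and the conditionally mean-zero cross term $2\sqrt t\sum_{k\ne j} X_{jk} w_{jk} R^{(j)}_{kk}$, which is small by a second-moment estimate. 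Putting these together, after tracking the local-law error from \eqref{mn0z}, yields $|R_{jj}^{-1} - S_{jj}^{-1}| \dom (\log N)^C (N\eta^2)^{-\alpha/8}$ with overwhelming probability. For the second bound, $|R_{jj}| \le (\log N)^C$ is the local law \eqref{mn0z}, while $|S_{jj}| \le (\log N)^C$ follows from the representation of $S_{jj}$: since $\Im(-S_{jj}^{-1}) = \eta + \sum_{k\ne j} Z_{jk}^2 \Im R^{(j)}_{kk}$ and $\Im R^{(j)}_{kk} \ge c$ for a positive proportion of indices $k$ (by \Cref{l:Xlocallaw} and \Cref{l:mabounds}), this is bounded below by $(\log N)^{-C}$ with overwhelming probability, using that a sum of many independent copies of $Z_{jk}^2$ exceeds $(\log N)^{-C}$ with overwhelming probability — a consequence of the rapidly decaying left tail of the limiting one-sided $\frac{\alpha}{2}$-stable law. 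Finally, $|S_{jj} - R_{jj}| = |R_{jj} S_{jj}|\,|R_{jj}^{-1} - S_{jj}^{-1}| \dom (\log N)^{2C}|R_{jj}^{-1} - S_{jj}^{-1}|$ on the good event, while on its complement $|S_{jj} - R_{jj}| \le 2\eta^{-1}$ crudely, so taking expectations gives the first bound.

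I expect the main obstacle to be the $L^1$ estimate $\E|S_{jj} - R_{jj}| \dom (\log N)^C (N\eta^2)^{-\alpha/8}$. Because the quadratic form $\sum_{k,l \ne j}(\X_t)_{jk}(\X_t)_{jl} R^{(j)}_{kl}$ genuinely fails to concentrate — its diagonal part is precisely what produces the nontrivial limiting law of $R_{jj}$ — one cannot simply invoke a concentration inequality; instead one must carefully isolate the part that converges, namely $\sum_{k\ne j} Z_{jk}^2 R^{(j)}_{kk}$, from the errors, and calibrate the truncation scale $\vartheta$ to balance the number of large entries (counted via \eqref{probabilityxij}) against the second-moment contribution of the small ones (controlled via the Ward identity and the a priori bound $|R^{(j)}_{kk}| \le (\log N)^C$ from \Cref{l:Xlocallaw}, valid on scales $\eta \gg N^{-1/2}$), all while transferring estimates between $\R$ and the minor resolvents $\R^{(j)}$.
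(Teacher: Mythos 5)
The paper does not prove this lemma itself---it simply quotes Propositions 7.9 and 7.11 of \cite{aggarwal2018goe}---and your sketch correctly reconstructs the strategy of the cited proof: Schur complement, a priori bounds on $|R_{jj}|$ and $|S_{jj}|$ via the lower bound on $\Im(-S_{jj}^{-1})$ coming from the left tail of the one-sided $\frac{\alpha}{2}$-stable limit, truncation plus second-moment/Ward-identity control of the off-diagonal quadratic form, and matching of truncated second moments between the removed small entries and the Gaussian perturbation via the choice of $t$ in \eqref{t}. The only point left implicit is that calibrating the truncation scale actually yields the stated exponent $\alpha/8$ rather than an unspecified $c>0$, but that is bookkeeping; every structural ingredient is in place.
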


	We conclude this section with the following concentration estimate, which is essentially \cite[Proposition 7.17]{aggarwal2018goe}. Although it was only stated in \cite{aggarwal2018goe} for the case when $\mathcal{I}$ is a single index, the fact that it can be extended to all $\mathcal{I}$ of uniformly bounded size is a quick consequence \cite[Lemma 5.6]{aggarwal2018goe}.
	
	\bel[{\cite[Proposition 7.17]{aggarwal2018goe}}]\label{l:1}
	
	There exists a constant $K > 0$ such that the following holds. Fix a real number $\delta > 0$, and abbreviate $\widetilde{\mathcal{D}} = \widetilde{\mathcal{D}}_{K, \delta}$ from \eqref{d2}. For every index set $\mathcal I \subset [ 1 , N ]$, there exists a constant $C = C \big( s, |\mathcal{I}| \big) > 0$ such that, with overwhelming probability, we have
	\beq
	\label{gammazigammaz}
	\displaystyle\sup_{z \in \widetilde{\mathcal{D}}} \displaystyle\sup_{u \in \mathbb{S}_+^1} \Big|  \gamma^{(\mathcal I)}_z(u)  - \E \big[\gamma_z (u) \big] \Big| \le \frac{C (\log N)^C }{N^{s/2}\eta^{\alpha/2}}. 
	\eeq
	
	\noindent Here, the expectation is taken with respect to both $\textbf{\emph{X}}_s$ and the Gaussian variables $g_k$.
	\eel

\subsection{Tightness}\label{s:tightness}

The following lemma computes the scaling limits of moments of $\Im \rstar (E + \iu \eta)$, as $\eta$ tends to $0$. We recall $\gamma^\star_E$ from \Cref{l:gammalimitlemma}.

\bel

\label{l:boundarylimit} 

There exists a constant $c>0$ such that the following holds. Fix a real number $E \in [-c,c]$, and let $\{ E_N \}_{N \ge 1}$ and $\{ \eta_N \}_{N \ge 1}$ be sequences of real numbers such that $\lim_{N\rightarrow \infty} E_N = E$ and $\lim_{N\rightarrow \infty} \eta_N = 0$. Then, for each $p\in \mathbb N$, we have that  
\begin{flalign}
\label{rlimitp}
\lim_{N \rightarrow \infty} \E \Big[  \big( \Im \rstar (E_N + \iu \eta_N) \big)^p  \Big] =  2^{-p} \left( \mathfrak{X} + \overline{\mathfrak{X}} + \displaystyle\sum_{a = 1}^{p - 1} \binom{p}{a} \mathfrak{Y} (a) \right),
\end{flalign}

\noindent where $\mathfrak{X} = \mathfrak{X}_p$ and $\mathfrak{Y} (a) = \mathfrak{Y}_p (a)$ are defined by 
\begin{flalign}
\label{x}
\mathfrak{X} = \displaystyle\frac{1}{\Gamma (p)} \displaystyle\int_{\mathbb{R}_+} t^{p - 1} \exp \big( \iu E t - t^{\alpha / 2} \gamma^\star_E (1) \big) \, dt,
\end{flalign}

\noindent and
\begin{flalign}
\begin{aligned}
\label{y}
\mathfrak{Y} (a) = \frac{1}{\Gamma(a) \Gamma(p - a)} \displaystyle\int_{\mathbb{R}_+^2} & t^{a-1} s^{p - a - 1} \exp \big( \iu E (t - s) \big) \\
& \times \exp \Bigg( - (t^2 + s^2)^{\alpha / 4} \gamma^\star_E \left( \frac{t + \iu s}{\sqrt{t^2 + s^2}}\right) \Bigg) \, dt \,ds. 
\end{aligned}
\end{flalign}

\noindent Thus, the left side of \eqref{rlimitp} exists, depends only on $E$ and $p$, and is uniformly continuous in $E$.
\eel

\begin{proof}
	
For brevity, we set $R_\star = R_\star(E_N + \iu \eta_N)$. We first express moments of $\Im \rstar$ in terms of $\gstar$ (recall \Cref{d:gstar}). To that end, we fix $p\in \mathbb N$ and use the identity $2 \mathrm{i} \Im \rstar = \rstar - \overline{\rstar}$ to write 
\begin{equation}
\label{e:expansion1}
(\Im \rstar )^p  =  (2 \mathrm{i})^{-p} (\rstar - \overline{\rstar})^p   = (2 \mathrm{i})^{-p}  \sum_{a=0}^p \binom{p}{a} R_\star^a (-\overline{R_\star})^{p-a}.
\end{equation}

\noindent So, to establish \eqref{rlimitp}, it suffices to show for each integer $a \in [1, p]$ that
\begin{flalign}
\label{xymoments}
\displaystyle\lim_{N \rightarrow \infty} \E \big[ (- \iu R_\star)^p \big] = \mathfrak{X}; \qquad \displaystyle\lim_{N \rightarrow \infty} \E \big[ (- \iu R_\star)^a ( \iu \overline{R_\star})^{p-a} \big] = \mathfrak{Y} (a).
\end{flalign}

\noindent We only establish the second equality in \eqref{xymoments}, as the proof of the former is entirely analogous.

To that end, let $R_1, R_2, \ldots $ denote  i.i.d.\ complex random variables whose laws are given by $\rstar$, and let $\{ \xi_k \}_{k \ge 1}$ denote a Poisson point process with intensity measure $\big( \frac{\alpha}{2} \big) x^{-\alpha/2 -1}\, dx$ (independent from the $\{ R_k \}$), as in \Cref{d:rde}. Then, \Cref{d:rde} implies 
\begin{flalign} 
\label{expectationabr} 
\mathbb{E} \big[ ( - \iu\rstar)^a (\iu \overline{\rstar})^b \big] = \mathbb{E} \Bigg[ \bigg( - \iu \displaystyle\sum_{k = 1}^{\infty} \xi_k R_k - \iu z \bigg)^{-a} \bigg(  \iu \displaystyle\sum_{k = 1}^{\infty} \xi_k \overline{R_k} + \iu \overline{z} \bigg)^{-b} \Bigg],
\end{flalign} 

\noindent for any $a, b \ge 0$. Next, recall the integral formula
 \begin{equation}
 \label{wbetaintegral}
w^{-\beta}  = \frac{1}{\Gamma(\beta)} \int_{\mathbb{R}_+} t^{\beta-1} \exp( - w t )\,d t, \qquad \text{for $\Re w > 0$ and $\beta>0$}. 
\end{equation}
 For brevity, set $A = \sum_{k=1}^\infty \xi_k R_k(z)$. Abbreviating $z = z_N = E_N + \iu \eta_N$, \eqref{wbetaintegral} implies for $a, b > 0$ that 
\begin{align}\label{e:sasb1}
( - \iu A - \mathrm{i} z)^{-a}	 (\iu \overline{A} + \mathrm{i} \overline{z} )^{-b} &\overset{d}{=} \frac{1}{\Gamma(a) \Gamma(b)} \displaystyle\int_{\mathbb{R}_+^2} t^{a-1} s^{b-1} \exp \big(\iu t (z + A)  -  \iu s(  \bar z + \overline{A}) \big)\, dt \,ds\\
&= \frac{1}{\Gamma(a) \Gamma(b)}\displaystyle\int_{\mathbb{R}_+^2} t^{a-1} s^{b-1} \exp (  \iu tA - \iu s\overline{A} ) \exp\left(  \iu t z -  \iu s \bar z  \right) \, dt \,ds.\label{e:sasb2}
\end{align}
We recall the L\'evy--Khintchine formula (see \cite[(4.5)]{bordenave2011spectrum}): for any i.i.d.\ complex random variables $\{ w_k \}_{k \ge 1}$ such that $\Re w_k \ge 0$ holds almost surely, we have 
\beq\label{e:lk}
\E \Bigg[ \exp \bigg( - \sum_{k = 1}^{\infty} \xi_k w_k \bigg) \Bigg]  = \exp \bigg( - \Gamma \Big( 1 - \frac{\alpha}{2} \Big) \E \big[ w_1^{\alpha / 2} \big] \bigg).
\eeq
Since $\Im R_\star \ge 0$, \eqref{expectationabr}, \eqref{e:sasb2}, and \eqref{e:lk} together imply
\begin{flalign} 
\E \big[ (- \iu \rstar)^a (\iu \overline{\rstar})^b \big] =  \frac{1}{\Gamma(a) \Gamma(b)} \displaystyle\int_{\mathbb{R}_+^2} & t^{a-1} s^{b-1} \exp \bigg( - \Gamma \Big( 1 - \displaystyle\frac{\alpha}{2} \Big) \mathbb{E} \big[ ( \iu s \overline{\rstar} -\iu t \rstar)^{\alpha / 2} \big] \bigg) \\
& \times \exp (  \iu t z -  \iu s \bar z) \, dt \,ds. 
\end{flalign} 

\noindent Recalling $\gstar$ from \eqref{d:gstar}, it follows that
\begin{flalign} 
\label{e:sasb3}
\begin{aligned}
\E \big[ (- \iu \rstar)^a (\iu \overline{\rstar})^b \big] =  \frac{1}{\Gamma(a) \Gamma(b)} \displaystyle\int_{\mathbb{R}_+^2} & t^{a-1} s^{b-1} \exp \bigg( - (t^2 + s^2)^{\alpha / 4} \gstar \Big( \frac{t + \iu s}{\sqrt{t^2 + s^2}} \Big) \bigg) \\
& \times \exp (  \iu t z -  \iu s \bar z) \, dt \,ds.
\end{aligned}
\end{flalign}

Next, observe by \eqref{e:starclaim2}and \eqref{e:starclaim1}, there exists a constant $c>0$ such that
\begin{equation} 
\label{imaginarygammaestimate}
\displaystyle\sup_{\substack{z \in \mathbb{H} \\ |z| < c}} \inf_{u \in \mathbb S^1_+} \Re \gstar(u) > c; \qquad \lim_{N \rightarrow  \infty} \displaystyle\sup_{u \in \mathbb{S}_+^1} \big| \gamma^\star_{z_N} (u) - \gamma^\star_E (u) \big| = 0.
\end{equation}
Therefore, \eqref{e:sasb3}; the dominated convergence theorem; and the fact that 
\begin{flalign} 
\label{stintegral}
\displaystyle\int_{\mathbb{R}_+^2} s^{a - 1} t^{b - 1} \exp \big( - c (s^2 + t^2)^{\alpha / 4} \big) \, ds \, dt < \infty, 
\end{flalign} 

\noindent together imply for $a \in [1, p - 1]$ that $\lim_{N \rightarrow \infty} \E \big[ (- \iu \rstar)^a (\iu \overline{\rstar})^{p - a} \big] = \mathfrak{Y} (a)$; this establishes the second statement in \eqref{xymoments}. The proof of the first is entirely analogous and is therefore omitted. Now \eqref{rlimitp} follows from \eqref{e:expansion1} and \eqref{xymoments}. 

That the left side of \eqref{rlimitp} depends only on $E$ and $p$ holds since the same is true for $\mathfrak{X}$ and $\mathfrak{Y}$. Similarly, to verify the uniform continuity of the left side of \eqref{rlimitp} in $E$, it suffices to do the same for $\mathfrak{X}$ and $\mathfrak{Y}$. The latter follows from the continuity in $E$ for the integrands on the right sides of \eqref{x} and \eqref{y}, the first bound in \eqref{imaginarygammaestimate}, \eqref{stintegral}, and the dominated convergence theorem. 
\end{proof}

\begin{rmk}

\label{rxry}

The proof of \Cref{l:boundarylimit} implies that $\lim_{N \rightarrow \infty} \E \big[ (- \iu \rstar)^a (\iu \overline{\rstar})^{p - a} \big]$ is equal to $\overline{\mathfrak{X}}$ if $a = 0$, to $\mathfrak{Y} (a)$ if $a \in [1, p - 1]$, and to $\mathfrak{X}$ if $a = p$. 

\end{rmk}

Now we can quickly establish \Cref{p:tightness}. 

\begin{proof}[Proof of \Cref{p:tightness}.] 
Since \Cref{l:boundarylimit} implies that $\mathbb{E} \big[ \big( \Im \rstar (E + \iu \eta) \big)^2 \big]$ is uniformly bounded in $\eta > 0$, the sequence $\big\{ \Im \rstar (E + \iu \eta) \big\}_{\eta > 0}$ of random variables is tight. This establishes the first claim of the proposition. The second is a direct consequence of \Cref{l:boundarylimit}.\end{proof}

\subsection{Scaling limit of $A (\textbf{q}, \bm \xi)$}

\label{s:aetascaling}

We begin with the limit of the numerator of $A (\textbf{q}, \bm \xi)$. To compute the scaling limits of the moments of $A(\textbf{q}, \bm \xi)$, we first show that the off-diagonal resolvent entries in the numerator of $A (\textbf{q}, \bm \xi)$ are negligible. Here, we recall the $\widehat{\gamma}_i = \widehat{\gamma}_i (t)$ from \eqref{hatgammaalphai}.

\bel

\label{l:offdiagonal} 

For all real numbers $\delta > 0$; integers $m, n > 0$; and unit vectors $\q = (q_1, q_2, \ldots , q_N) \in \mathbb{R}^N$ with $|\supp \textbf{\emph{q}}| = m$, there exist constants $c > 0$ (independent of $\delta$, $m$, and $n$) and $C = C (\delta, m, n) > 0$ such that the following holds. Let $\{ k_1, k_2, \ldots , k_n \} \subset [1, N]$ denote an index sequence such that $\max_{1 \le j \le n} | k_j - N/2 | < cN$; let $\supp \q = \{ j_1, j_2, \ldots , j_m \}$; and let $t$ be as in \eqref{t}. Then, for $\eta \ge N^{\delta - 1 / 2}$,	
\beq
\label{e:ugh}
\Bigg| \E \bigg[\prod_{i =1}^n \Im \Big\langle \q , \R \big(t , \widehat{\gamma}_{k_i} + \iu \eta \big) \q \Big\rangle \bigg] - \E \bigg[\prod_{i = 1}^n \Im \sum_{h = 1}^m q_{j_h}^2 R_{j_h j_h} \big( t , \widehat{\gamma}_{k_i} + \iu \eta \big) \bigg] \Bigg| \le \frac{C N^\delta}{\sqrt{N\eta}}.
\eeq
\eel

\begin{proof}

First observe that 
\begin{flalign} 
\E \bigg[\prod_{i =1}^n \Im \Big\langle \q , \R \big(t , \widehat{\gamma}_{k_i}  + \iu \eta \big) \q \Big\rangle \bigg] & = \E \bigg[\prod_{i =1}^n \displaystyle\sum_{a = 1}^m \displaystyle\sum_{b = 1}^m q_{j_a} q_{j_b} \Im R_{j_a j_b} \big(t , \widehat{\gamma}_{k_i}  + \iu \eta \big) \bigg] \\
& = \displaystyle\sum_{\textbf{a}, \textbf{b}} \E \bigg[\prod_{i =1}^n q_{j_{a(i)}} q_{j_{b(i)}} \Im R_{j_{a(i)} j_{b(i)}} \big(t , \widehat{\gamma}_{k_i}  + \iu \eta \big) \bigg], \label{productrmn}
\end{flalign}

\noindent where in the right side of \eqref{productrmn}, $\textbf{a} = \big( a(1), a(2), \ldots , a(n) \big)$ and $\textbf{b} = \big( b(1), b(2), \ldots , b (n) \big)$ are summed over all sequences of $\{1, 2, \ldots , m \}^n$. 

It suffices to bound by $C N^{\delta} (N \eta)^{-1 / 2}$ any summand on the right side of \eqref{productrmn} for which there exists some $i' \in [1, n]$ such that $a(i') \ne b(i')$. To that end, observe that the second bound in \eqref{mn0z} (to bound $\big| R_{j_{a(i)} j_{b(i)}} \big| \le N^{\delta / 2n}$ with overwhelming probability for $i \ne i'$); \eqref{kij}; the fact that $q_j \le 1$ for each $j \in [1, N]$; and the exchangeability of the matrix entries of $\textbf{X}_t$ together imply that any such term is bounded by 
\beq
\label{estimaterjarjb}
N^{\delta / 2} \mathbb{E} \bigg[ \Big|R_{12} \big(t, \widehat{\gamma}_{k_{i'}}  + \iu \eta \big) \Big| \bigg].
\eeq

\noindent To estimate this quantity, abbreviate $R_{ij} = R_{ij} (t, \widehat{\gamma}_{k_{i'}} + \mathrm{i} \eta)$, and observe that the Ward identity \eqref{kijsum} and the exchangeability of $\textbf{X}_t$ together imply that
\begin{flalign}
\E \big[ | R_{12}|  \big] \le \Big( \mathbb{E} \big[ | R_{12}|^2 \big] \Big)^{1 / 2} \le \Bigg( \mathbb{E} \bigg[ \displaystyle\frac{1}{N - 1} \displaystyle\sum_{j = 1}^N | R_{1j}|^2 \bigg] \Bigg)^{1 / 2} \le  \displaystyle\frac{2 \mathbb{E} [ \Im R_{11} ]^{1 / 2}}{\sqrt{N\eta}}.
\end{flalign}

\noindent Using the second bound in \eqref{mn0z}, and the deterministic bound \eqref{kij} on the exceptional set where the former estimate does not apply, yields $\mathbb{E} [ \Im R_{11} ] \le C (\log N)^C$, and so 
\beq
 \E \big[ |R_{12} | \big] \le \frac{C N^{\delta / 2}}{\sqrt{N\eta}}.
\eeq

\noindent Together with \eqref{productrmn} and \eqref{estimaterjarjb}, this implies \eqref{e:ugh}. 
\end{proof}

In \cite[Theorem 2.8]{bordenave2011spectrum} it was shown for fixed $z \in \mathbb{H}$ that the diagonal resolvent elements $G_{ii}(z)$ of the matrix $\bH$ are asymptotically independent. The next lemma is a version of this result (for the perturbed model $\X_{t}$) when $\eta = \Im z$ is simultaneously tending to $0$. \Cref{t:main1} is then deduced quickly as a consequence. Below, we recall $\mathcal R_\star(E)$ from \Cref{d:arbitrary}.

\begin{prop}

\label{l:diagElimit}

There exists a constant $c>0$ such that the following holds. Fix integers $m, n > 0$ and a unit vector $\q = (q_1, q_2, \ldots , q_N) \in \mathbb{R}^N$ with $|\supp \textbf{\emph{q}}| = m$. Let $\supp \q = \{ j_1, j_2, \ldots , j_m \}$, and let $t \in \mathbb{R}_{> 0}$ be as in \eqref{t}. Fix a real number $E \in [-c, c]$, and let $\{ \eta_N \}_{N \ge 1}$ and $\big\{ E_N^{(i)} \big\}_{N \ge 1}$ for each integer $i \in [1, n]$ be sequences of real numbers such that 
\begin{flalign} \label{ehypothesis}
\lim_{N\rightarrow \infty} \eta_N = 0; \qquad \eta_N \gg N^{-1 / 2}; \qquad \lim_{N\rightarrow \infty} E_N^{(1)} = E; \qquad \displaystyle\max_{1 \le i \le n} \big| E_N^{(i)} - E_N^{(1)} \big| \ll \eta_N.
\end{flalign} 

\noindent Then, letting $\big\{ \mathcal{R}_{j_i} (E) \big\}_{i \in [1, m]}$ be i.i.d.\ random variables each with law $\mathcal{R}_{\star} (E)$, we have that 
\beq
\label{e:diagElimit}
\lim_{N\rightarrow\infty} \E\left[\prod_{i = 1}^n \Im \sum_{k = 1}^m q_{j_k}^2 R_{j_k j_k} \big( t , E_N^{(i)} + \iu\eta_N \big) \right] =    \E \Bigg[ \bigg( \sum_{k = 1}^m q_{j_k}^2 {\mathcal R}_{j_k} (E) \bigg)^n \Bigg].
\eeq

\end{prop}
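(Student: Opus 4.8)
\textbf{Proof proposal for \Cref{l:diagElimit}.}

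The plan is to compare the diagonal resolvent entries $R_{j_k j_k}(t, z)$ to the i.i.d.\ limiting variables $\mathcal R_{j_k}(E)$ in two stages: first I would show that the joint moments on the left side of \eqref{e:diagElimit} are asymptotically the same as the corresponding moments for the Poisson-tree resolvent $R_\star(z)$ (with $z$ having vanishing imaginary part), and then I would pass from $R_\star(E_N^{(i)} + \iu \eta_N)$ to a fixed limit point $\mathcal R_\star(E)$ using \Cref{p:tightness} and \Cref{l:boundarylimit}. For the first stage, I would expand the product over $i \in [1,n]$ and over $k \in [1,m]$ into a sum of monomials in the $R_{j_k j_k}(t, E_N^{(i)} + \iu \eta_N)$, and it suffices to handle a single monomial of the form $\prod_{\ell} R_{p_\ell p_\ell}(t, z_\ell)$ where the indices $p_\ell \in \{ j_1, \ldots, j_m\}$ and spectral parameters $z_\ell = E_N^{(i_\ell)} + \iu \eta_N$ may repeat. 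The key tool is \Cref{sjjrjjestimate}, which lets me replace each $R_{p_\ell p_\ell}$ by $S_{p_\ell p_\ell} = -\big(z_\ell + \sum_{k \ne p_\ell} Z_{p_\ell k}^2 R_{kk}^{(p_\ell)}\big)^{-1}$ up to an error $O\big((N\eta^2)^{-\alpha/8}(\log N)^C\big)$, which is negligible since $\eta_N \gg N^{-1/2}$; here the boundedness bound in \eqref{estimatesjjrjj} controls the telescoping of these replacements across the finitely many factors. Then, since the removed resolvents $R_{kk}^{(p_\ell)}$ for distinct $p_\ell$ only differ from the full ones (and from each other) by a bounded-rank perturbation that is controlled by \Cref{l:1} (and the fact that $|\supp \q|$ is uniformly bounded), the different factors decouple: their joint law converges, as $N \to \infty$, to that of independent copies, each governed by the recursive distributional equation \eqref{e:rde}. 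This is essentially the mechanism behind \cite[Theorem 2.8]{bordenave2011spectrum}, adapted to the regime $\eta = \eta_N \to 0$; the uniform control needed to run it at vanishing $\eta$ is exactly provided by the local law \Cref{l:Xlocallaw} and the concentration estimate \Cref{l:1}, together with the Gaussian-weight order parameter $\gamma_z^{(\mathcal I)}$ from \Cref{d:orderparameter} and the convergence $\E[\gamma_z(u)] \to \gamma_z^\star(u)$ of \Cref{l:hatlocal}.

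More concretely, for a fixed collection of distinct indices among $\{j_1, \ldots, j_m\}$ appearing in the monomial, I would condition on the submatrix obtained by removing all those rows and columns, apply the Schur complement formula to each, and use a Hubbard--Stratonovich type transform (as in the derivation of \eqref{zjkgk} and the definition of $\gamma_z^{(\mathcal I)}$) to express the joint Laplace/characteristic functional of the relevant $S_{p_\ell p_\ell}$ in terms of $\gamma_z^{(\mathcal I)}(u)$ evaluated at the appropriate unit vectors $u = (t + \iu s)/\sqrt{t^2 + s^2}$ determined by the integral representation \eqref{wbetaintegral}. Since the spectral parameters satisfy $\max_i |E_N^{(i)} - E_N^{(1)}| \ll \eta_N$ and $E_N^{(1)} \to E$, all these parameters lie in a shrinking neighborhood of $E$, so by \Cref{l:1}, \Cref{l:hatlocal}, and \Cref{l:gammalimitlemma} the order parameters $\gamma_{z_\ell}^{(\mathcal I)}(u)$ all converge (uniformly in $u \in \mathbb S^1_+$) to the single limit $\gamma_E^\star(u)$. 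Feeding this into the L\'evy--Khintchine formula \eqref{e:lk} exactly as in the proof of \Cref{l:boundarylimit} shows that the limiting joint moments of the $\Im R_{j_k j_k}(t, z_\ell)$ coincide with those of i.i.d.\ copies of $\Im R_\star(E_N + \iu \eta_N)$ with $\eta_N \to 0$; the latter converge to moments of $\mathcal R_\star(E)$ by \Cref{l:boundarylimit} (and \Cref{rxry}). Combining with the polynomial identity $\E\big[(\sum_k q_{j_k}^2 \mathcal R_{j_k}(E))^n\big] = \sum q_{j_{k_1}}^2 \cdots q_{j_{k_n}}^2 \E[\mathcal R_{j_{k_1}}(E) \cdots \mathcal R_{j_{k_n}}(E)]$ and the independence of the $\mathcal R_{j_k}(E)$ yields \eqref{e:diagElimit}.

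Given \Cref{l:diagElimit} and \Cref{l:offdiagonal}, \Cref{t:main1} then follows: \Cref{l:offdiagonal} replaces $\Im \langle \q, \R(t, \widehat\gamma_{k_i} + \iu\eta)\q\rangle$ by $\Im \sum_h q_{j_h}^2 R_{j_h j_h}(t, \widehat\gamma_{k_i} + \iu\eta)$ up to $O(N^\delta (N\eta)^{-1/2})$; the rigidity bounds \eqref{e:gammacompare} and \eqref{e:newrigidity} together with \eqref{e:macontinuity} show that $\widehat\gamma_{k_i} \to E$ and that the normalizing factors $\Im m_\alpha(\gamma_{k_i} + \iu\eta)$ converge to $\pi \varrho_\alpha(E)$, matching the definition $\mathcal U_\star(E) = (\pi\varrho_\alpha(E))^{-1}\mathcal R_\star(E)$ from \Cref{d:arbitrary}; and then \Cref{l:diagElimit} identifies the limiting moments. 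The condition $|k_1 - k_j| < N^{1/2}$ is precisely what guarantees (via \eqref{e:newrigidity} and the density lower bound in \eqref{e:rhoacontinuity}) that $|\widehat\gamma_{k_1} - \widehat\gamma_{k_j}| \ll \eta_N$, so the hypothesis $\max_i|E_N^{(i)} - E_N^{(1)}| \ll \eta_N$ of \Cref{l:diagElimit} is met.

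\textbf{Main obstacle.} The hard part will be making the decoupling argument of \cite[Theorem 2.8]{bordenave2011spectrum} quantitative and uniform in the regime $\eta = \eta_N \to 0$ with $\eta_N \gg N^{-1/2}$: one must control, simultaneously, the error in replacing $R$ by $S$ (which degrades as $\eta \to 0$ through the $(N\eta^2)^{-\alpha/8}$ factor), the error in the order-parameter concentration \eqref{gammazigammaz} (which carries an $\eta^{-\alpha/2}$), and the fluctuations of the $\frac{\alpha}{2}$-moments of the removed resolvents entering the recursive equation. Keeping all these errors $o(1)$ while $\eta_N$ is allowed to approach $N^{-1/2}$ requires carefully tracking the interplay between $\eta_N$, the spectral-domain restrictions in \Cref{l:Xlocallaw} and \Cref{l:1}, and the exponents; this is where the bulk of the technical work lies, though no genuinely new idea beyond the combination of the cited local laws and the $R_\star$-distributional equation should be needed.
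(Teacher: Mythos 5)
Your proposal follows essentially the same route as the paper's proof: replace $R_{jj}$ by the Schur-complement quantity $S_{jj}$ via \Cref{sjjrjjestimate}, decouple distinct indices by conditioning on the minor (using the interlacing/removal estimates), and identify the limit through the integral representation \eqref{wbetaintegral}, the Hubbard--Stratonovich transform \eqref{zjkgk}, the order parameter $\gamma_z^{(\mathcal I)}$ with \Cref{l:hatlocal} and \Cref{l:1}, and finally \Cref{l:boundarylimit}. The only detail worth flagging is that for a repeated index carrying two different spectral parameters $z_1, z_2$, the paper first replaces $R_{jj}(z_2)$ by $R_{jj}(z_1)$ via an explicit resolvent-continuity bound (integrating $\partial_z R_{jj}$ and using the Ward identity together with $|z_1-z_2|\ll\eta$) before invoking the Schur-complement machinery, since $S_{jj}(z_1)$ and $S_{jj}(z_2)$ involve removed resolvents at different $z$ and cannot be fed into a single order parameter; your sketch defers this to the convergence of the order parameters, but the same hypothesis $\max_i|E_N^{(i)}-E_N^{(1)}|\ll\eta_N$ you identify is exactly what makes the needed continuity step work.
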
 

\begin{proof}
	
	It suffices to show that, for any sequences of nonnegative integers $\textbf{n}^{(i)} = \big( n_1^{(i)}, n_2^{(i)}, \ldots , n_m^{(i)} \big)$ for $1 \le i \le n$ with $N_k = \sum_{i = 1}^n n_k^{(i)}$, we have 
	\beq
	\label{limitproductrjrj}
	\lim_{N\rightarrow\infty} \E \Bigg[\prod_{i = 1}^n \displaystyle\prod_{k = 1}^m \Big( \Im R_{j_k j_k} \big( t , E_N^{(i)} + \iu\eta_N \big) \Big)^{n_k^{(i)}} \Bigg] =   \displaystyle\prod_{k = 1}^m \E \big[ \mathcal{R}_{\star} (E)^{N_k} \big].
	\eeq
	
	To ease notation, we detail the proof of \eqref{limitproductrjrj} when $(m, n) = (1, 1)$ and outline it when $(m, n) = (1, 2)$ and $(m, n) = (2, 2)$, which are largely analogous. We omit the proofs in the remaining cases, since they are very similar to those of the $(m, n) \in \big\{ (1, 2), (2, 2) \big\}$ cases.

	To that end, first assume $(m, n) = (1, 1)$; abbreviate $j_1 = j$, $z = E_N^{(1)} + \iu \eta_N$, and $R_{ik} = R_{ik} (t, z)$; and set $p = n_1^{(1)}$. We compute $\lim_{N \rightarrow \infty} \E \big[ (\Im R_{jj})^p  \big]$. As in the proof of \Cref{l:boundarylimit}, we use the identity $2 \mathrm{i} \Im R_{jj} = R_{jj} - \overline{R}_{jj}$ to write 
	\begin{equation}
	\label{e:expansion}
	\E \big[ (\Im R_{jj})^p  \big] =  (2 \mathrm{i})^{-p} \E \big[ (R_{jj} - \overline{R}_{jj})^p  \big] = (2 \mathrm{i})^{-p} \sum_{a=0}^p (-1)^{p - a} \binom{p}{a} \E \big[  R_{jj}^a \overline{R}_{jj}^{p-a} \big].	
	\end{equation}
	
	Now, recall from \Cref{momentassumption} that each entry of $\textbf{H}$ has law $N^{-1 / \alpha} (Z + J)$, where $Z$ is $\alpha$-stable and $J$ has finite variance. For each $1 \le i \le j \le N$, let $\{ Z_{ij} \}$ denote mutually independent random variables with law $N^{-1 / \alpha} Z$ such that $N^{1 / \alpha} |H_{ij} - Z_{ij}|$ has uniformly bounded variance. Following \eqref{sjj}, for any subset $\mathcal{I} \subset [1, N]$ and index $j \in [1, N] \setminus \mathcal{I}$, set $\mathcal{J} = \mathcal{I} \cup \{ j \}$ and define 
	\begin{flalign}
	\label{sjj2}
	S_{jj}^{(\mathcal{I})} = S_{jj}^{(\mathcal{I})} (z) = - \Bigg( z + \displaystyle\sum_{k \notin \mathcal{J}} Z_{kj}^2 R_{kk}^{(\mathcal{J})} \Bigg)^{-1}.
	\end{flalign} 
	
	\noindent If $\mathcal{I}$ is empty, then we abbreviate $S_{jj} = S_{jj}^{(\mathcal{I})}$. 

Then \eqref{estimatesjjrjj} and the deterministic bounds given by \eqref{kij} and $|S_{jj}| \le \eta^{-1}$ together imply that there exists a constant $C = C(p) > 0$ such that
	\beq\label{e:l54}
	\E \big[ |R_{jj}^a  - S_{jj}^a|  \big] + \E \big[ | \overline{R}_{jj}^a  - \overline{S}_{jj}^a|  \big]  \le \frac{ C (\log N)^C }{(N \eta^2)^{\alpha/8}},
	\eeq
	
	\noindent for any $a \in [0, p]$. Then, \eqref{e:l54} and \eqref{estimatesjjrjj} together imply for any $a, b \in [0, p]$ that 
	\begin{align}
	\label{rjjsjjbestimate}
	\mathbb{E} \big[ |R_{jj}^a \overline{R}_{jj} ^b  - S_{jj}^a \overline{S }_{jj}^b| \big] & \le \mathbb{E} \big[ |R_{jj}|^a |\overline{R}_{jj} ^b -  \overline{S }_{jj}^b| \big] + \mathbb{E} \big[ |S_{jj}|^b |R_{jj} ^a - S_{jj}^a| \big] \le \displaystyle\frac{C (\log N)^C}{(N \eta^2)^{\alpha / 8}}.
	\end{align}
	
	\noindent So, by \eqref{e:expansion}, the definition \eqref{limitresolvent} of $\rstar$, and \Cref{p:tightness}, it suffices to show for each integer $a \in [0, p]$ that 
	\begin{flalign}
	\label{sjjpaxy}
	\displaystyle\lim_{N \rightarrow \infty} \mathbb{E} \big[ S_{jj}^a \overline{S}_{jj}^{p - a} \big] = \displaystyle\lim_{\eta \rightarrow 0} \mathbb{E} \big[ \rstar (E + \iu \eta)^{a} \overline{\rstar} (E + \iu \eta)^{p - a}\big].
	\end{flalign} 
	
	\noindent Recalling \eqref{x}, \eqref{y}, and \Cref{rxry}, the right side of is equal to $\iu^{-p} \overline{\mathfrak{X}}$ if $a = 0$, to $\iu^{-p} (-1)^a \mathfrak{Y} (a)$ if $p \in [1, a - 1]$, and to $\iu^p \mathfrak{X}$ if $a = p$. Let us only show \eqref{sjjpaxy} in the case $a \in [1, p - 1]$, as the cases $a \in \{ 0, p \}$ are entirely analogous. 
	
	To that end, we proceed similarly to as in the proof of \Cref{l:boundarylimit}. More specifically, by \eqref{sjj2} and \eqref{wbetaintegral}, we deduce that 
	\begin{align}
	\label{e:sasb}
	( - \iu & S_{jj})^a (\iu \overline{S}_{jj})^b = \frac{1}{\Gamma(a) \Gamma(b)} \displaystyle\int_{\mathbb{R}_+^2} t^{a-1} s^{b-1} \exp \Bigg( \iu t z -  \iu s \bar z + \iu \sum_{k\neq i }  Z^2_{ik} \big( t R_{kk}^{(j)}  -  s\overline{R}_{kk}^{(j)} \big)  \Bigg) \, dt \,ds.
\end{align}

	\noindent To analyze the right side of \eqref{e:sasb}, observe by \cite[Corollary B.2]{bordenave2013localization} (whose proof proceeds by first applying a type of Hubbard--Stratonovich transform to linearize the exponential in the $\{ Z_{jk} \}$, and then using \eqref{e:lk} to evaluate the expectation) 
\begin{equation}
\label{zjkgk}
\E \Bigg[ \exp \bigg(  \iu \sum_{k \neq j}  Z_{jk}^2 \big( tR^{(j)}_{kk}  -  s\overline{R}^{(j)}_{kk}  \big)  \bigg) \Bigg] = \E \Bigg[\exp \bigg( - \frac{( -2\iu)^{\alpha/2} \sigma^\alpha}{N} \sum_{k\neq j} \big( tR^{(j)}_{kk}  -  s\overline{R}_{kk}^{(j)} \big)^{\alpha/2}   |g_k|^{\alpha} \bigg) \Bigg],
\end{equation}

\noindent where $\sigma >0$ is as in \eqref{stable}, the $g_k$ are i.i.d.\ standard Gaussian random variables, and the expectation is taken with respect to the $Z_{jk}$ on the left side and the $g_k$ on the right. 

Therefore, by the definition \eqref{gammazi} of $\gamma^{(j)}_z$, we find 
\begin{equation}
\label{e:514}
\mathbb{E} \Bigg[ \exp  \bigg( - \frac{( -2\iu)^{\alpha/2} \sigma^\alpha}{N} \sum_{k\neq j} \big( tR^{(j)}_{kk}  -  s\overline{R}^{(j)}_{kk}  \big)^{\alpha/2}   |g_k|^{\alpha} \bigg) \Bigg] = \E  \left[\exp \left( - \frac{N-1}{N} \gamma^{(j)}_z (t + \iu s) \right) \right],
\end{equation}

\noindent where we have used the fact (see \cite[(7.39)]{aggarwal2018goe}) that $\E \big[ |g_k|^{\alpha}\big] = 2^{-\alpha / 2} \sigma^{-\alpha} \Gamma \big( 1 - \frac{\alpha}{2} \big)$. Thus,
\begin{equation}\label{e:516}
\eqref{e:514} =  \E \Bigg[ \exp \bigg( -  \frac{N-1}{N} (t^2 + s^2)^{\alpha/4} \gamma^{(j)}_z \Big( \displaystyle\frac{t + \iu s}{\sqrt{t^2 + s^2}} \Big) \bigg) \Bigg].
\end{equation}
Using \eqref{gammazigammaz} and \eqref{gammazuestimate}, the fact that $\Im z \gg N^{-1 / 2}$, and the deterministic estimate $\Re \gamma^{(j)}_z(u) \ge 0$ on the exceptional event where \eqref{gammazigammaz} and \eqref{gammazuestimate} do not hold, we obtain 
\begin{equation}
\label{stgamma} 
\eqref{e:516} =  \exp \bigg( -  \frac{N-1}{N} (t^2 + s^2)^{\alpha/4} \gstar \Big( \displaystyle\frac{t + \iu s}{\sqrt{t^2 + s^2}} \Big) + O (N^{-c}) \bigg) + O (N^{-10}).
\end{equation}

\noindent Combining \eqref{e:sasb}, \eqref{zjkgk}, \eqref{e:514}, \eqref{e:516}, and \eqref{stgamma} yields 
\begin{align}
\E & \left[ ( - \iu S_{jj})^a (\iu \overline{S}_{jj})^b\right] \\
 & \quad = \frac{1}{\Gamma(a) \Gamma(b)} \displaystyle\int_{\mathbb{R}_+^2} t^{a-1} s^{b-1} \exp ( \iu t z -  \iu s \bar z) \\
& \qquad \times \Bigg ( \exp \bigg( -  \frac{N-1}{N} (t^2 + s^2)^{\alpha/4} \gstar \Big( \displaystyle\frac{t + \iu s}{\sqrt{t^2 + s^2}} \Big) + O (N^{-c}) \bigg) + O (N^{-10}) \Bigg) \, dt \,ds \label{sjjasjjb}.
\end{align}

By \eqref{imaginarygammaestimate} (with the $z_N$ there equal to $z$ here), \eqref{stintegral}, and the fact that $\Im z \gg N^{-1 / 2}$, we deduce from \eqref{sjjasjjb} and the dominated convergence theorem that 
\begin{align}
\lim_{N\rightarrow \infty} \mathbb{E} & \left[( - \iu S_{jj})^a (\iu \overline{S}_{jj})^b \right] \\
& =  \frac{1}{\Gamma(a) \Gamma(b)} \displaystyle\int_{\mathbb{R}_+^2} t^{a-1} s^{b-1} \exp \bigg( \iu E (t - s) - (t^2 + s^2)^{\alpha/4} \gamma^\star_E \Big( \displaystyle\frac{t + \iu s}{\sqrt{t^2 + s^2}} \Big) \bigg) \, dt \,ds.
\end{align}

\noindent By \Cref{rxry} and \eqref{y}, this yields \eqref{sjjpaxy} when $a \in [1, p - 1]$. The cases when $a \in \{ 0, p \}$ are handled analogously and therefore omitted. This therefore establishes \eqref{limitproductrjrj} in the case $m = 1 = n$. 

Next let us outline how to establish \eqref{limitproductrjrj} in the case $(m, n) = (1, 2)$. We abbreviate $z_1 = E_N^{(1)} + \iu \eta_N$, $z_2 = E_N^{(2)} + \iu \eta_N$, and $j = j_1$. Then following \eqref{e:expansion}, 
it suffices to show for any integers $a, b, c, d \ge 0$ that
\begin{flalign}
\label{s12rabcd2}
\lim_{N \rightarrow \infty} \E & \big[ R_{jj} (z_1)^a \overline{R}_{jj} (z_1)^b R_{jj} (z_2)^c \overline{R}_{jj} (z_2)^d \big] = \displaystyle\lim_{\eta \rightarrow 0} \mathbb{E} \big[ \rstar (E + \iu \eta)^{a + c} \overline{\rstar} (E + \iu \eta)^{b + d} \big].
\end{flalign}

We  write
\begin{align}
 \E \big[ R_{jj} (z_1)^a \overline{R}_{jj} (z_1)^b R_{jj} (z_2)^c \overline{R}_{jj} (z_2)^d \big]  & = \big[ R_{jj} (z_1)^{a + c} \overline{R}_{jj} (z_1)^{b + d} \big] \\
 & \quad + \E  \Big[ R_{jj} (z_1)^a \overline{R}_{jj} (z_1)^{b + d} \big(R_{jj} (z_2)^c  - R_{jj} (z_1)^c \big) \Big] \label{rjjabdc} \\
 & \quad + \label{e:errort1} \E \Big[ R_{jj} (z_1)^a \overline{R}_{jj} (z_1)^b R_{jj} (z_2)^c \big(\overline{R}_{jj} (z_2)^d  - \overline{R}_{jj} (z_1)^d  \big) \Big].
 \end{align}
 The first term is the main one, and it was shown in the preceding case, as \eqref{rjjsjjbestimate} and \eqref{sjjpaxy}, that
 \begin{flalign}
 \label{rjjabcd} 
\lim_{N \rightarrow \infty} \E & \big[ R_{jj} (z_1)^{a+c} \overline{R}_{jj} (z_1)^{b+d}  \big] = \displaystyle\lim_{\eta \rightarrow 0} \mathbb{E} \big[ \rstar (E + \iu \eta)^{a + c} \overline{\rstar} (E + \iu \eta)^{b + d} \big].
\end{flalign}
The latter two terms are error terms, and they tend to zero asymptotically. Let us show this for \eqref{e:errort1}, as the other term is similar. 

Since $ R_{jj}(z) - R_{jj}(w) = (w-z)\sum_{a=1}^N R_{ja}(z) R_{aj}(w)$ by \eqref{ab}, we have that 
\begin{equation}
\big| \partial_z R_{jj}(z) \big| \le \left|  \sum_{a=1}^N R_{ja}(z) R_{aj}(z) \right| \le  \sum_{a=1}^N \big| R_{ja}(z) \big|^2 = \frac{\Im R_{jj}(z)}{\eta} \le (\log N)^C \eta^{-1}
\end{equation}
with overwhelming probability, where in the equality we used \eqref{kijsum}, and in the last bound we used \eqref{mn0z}. Integrating  $\partial_z R_{jj}(z)^d = d R_{jj}^{d-1} \partial_z R_{jj}$ from $z_1$ to $z_2$ yields 
\begin{equation} \label{e:opr}
\big|R_{jj}(z_1)^d - R_{jj}(z_2)^d\big| \le d |z_1 - z_2|  (\log N)^{dC} \eta^{-1} \end{equation}
with overwhelming probability. 
Therefore, with overwhelming probability, we have
\begin{flalign}
& \left| R_{jj} (z_1)^a \overline{R}_{jj} (z_1)^b R_{jj} (z_2)^c \big(\overline{R}_{jj} (z_2)^d  - \overline{R}_{jj} (z_1)^d  \big)\right| \\ & \qquad \ll C | z_1 - z_2| (\log N)^{dC}  \eta^{-1} \left| R_{jj} (z_1)^a \overline{R}_{jj} (z_1)^b {R}_{jj} (z_2)^c \right|\\
& \qquad \ll |z_1 - z_2 | \eta^{-1} (\log N)^{(a+b+c+d)C}   \ll 1,
\end{flalign}
where we used \eqref{mn0z} in the last estimate, as well as the hypothesis \eqref{ehypothesis} that $| E_N^{(1)} - E_N^{(2)}| \ll \eta $ to bound $|z_1 - z_2|$. On the complementary event, we use the trivial bound \eqref{kij}. Together, these show that 
\beq
\lim_{N\rightarrow \infty} \E \Big[ R_{jj} (z_1)^a \overline{R}_{jj} (z_1)^b R_{jj} (z_2)^c \big(\overline{R}_{jj} (z_2)^d  - \overline{R}_{jj} (z_1)^d  \big) \Big] = 0,
\eeq
as desired. We have therefore established that \eqref{e:errort1} is small; since the same holds for \eqref{rjjabdc}, \eqref{rjjabcd} implies \eqref{s12rabcd2}.

Now let us outline how to establish \eqref{limitproductrjrj} in the case $(m, n) = (2, 2)$. We abbreviate $z_1 = E_N^{(1)} + \iu \eta_N$ and $z_2 = E_N^{(2)} + \iu \eta_N$, and we assume for notational convenience that $j_1 = 1$ and $j_2 = 2$. As in \eqref{e:expansion}, it suffices to show for any integers $a,b,c,d \ge 0$ that
\begin{flalign}
\lim_{N \rightarrow \infty} \E & \left[ R_{11} (z_1)^a \overline{R}_{11} (z_1)^b R_{22} (z_2)^c \overline{R}_{22} (z_2)^d \right] \\
&  = \displaystyle\lim_{\eta \rightarrow 0} \mathbb{E} \big[ \rstar (E + \iu \eta)^a \overline{\rstar} (E + \iu \eta)^b \big] \mathbb{E} \big[ \rstar (E + \iu \eta)^c \overline{\rstar} (E + \iu \eta)^d \big].
\end{flalign} 

\noindent In what follows, we assume that $a, b, c, d > 0$ for notational simplicity. Note that \cite[Lemma 5.5]{bordenave2017delocalization} implies for each $i \neq j$ that
\beq\label{e:interlacing}
\E \Big[ \big| R_{jj} - R_{jj}^{(i)}  \big| \Big] \le \frac{C}{N \eta}.
\eeq

\noindent It quickly follows from 
\eqref{e:interlacing}, \eqref{estimatesjjrjj}, the deterministic bound \eqref{kij} that 
\beq
\lim_{N \rightarrow \infty} \E \big[ R_{11} (z_1)^a \overline{R}_{11} (z_1)^b R_{22} (z_2)^c \overline{R}_{22} (z_2)^d \big] = \lim_{N \rightarrow \infty} \E \Big[ R_{11}^{(2)} (z_1)^a \overline{R}_{11}^{(2)} (z_1)^b R_{22}^{(1)} (z_2)^c \overline{R}_{22}^{(1)} (z_2)^d \Big],
\eeq
as in \eqref{rjjsjjbestimate}. As before, \eqref{estimatesjjrjj} and \eqref{kij} together imply that 
\beq
\lim_{N \rightarrow \infty} \E \Big[ R_{11}^{(2)} (z_1)^a \overline{R}_{11}^{(2)} (z_1)^b R_{22}^{(1)} (z_2)^c \overline{R}_{22}^{(1)} (z_2)^d \Big] = \lim_{N \rightarrow \infty} \E \Big[ S_{11}^{(2)} (z_1)^a \overline{S}_{11}^{(2)} (z_1)^b S_{22}^{(1)} (z_2)^c \overline{S}_{22}^{(1)} (z_2)^d \Big], 
\eeq

\noindent and so it suffices to show that 
\begin{flalign}
\label{s12rabcd}
\lim_{N \rightarrow \infty} \E & \Big[ S_{11}^{(2)} (z_1)^a \overline{S}_{11}^{(2)} (z_1)^b S_{22}^{(1)} (z_2)^c \overline{S}_{22}^{(1)} (z_2)^d \Big] \\
& = \displaystyle\lim_{\eta \rightarrow 0} \mathbb{E} \big[ \rstar (E + \iu \eta)^a \overline{\rstar} (E + \iu \eta)^b \big] \mathbb{E} \big[ \rstar (E + \iu \eta)^c \overline{\rstar} (E + \iu \eta)^d \big].
\end{flalign}

Once again using \eqref{sjj2} and \eqref{wbetaintegral}, we find
\begin{flalign}
\E & \Big[ \big( - \iu S_{11}^{(2)} (z_1) \big)^a \big( \iu \overline{S}_{11}^{(2)} (z_1) \big)^b \big( - \iu S_{22}^{(1)} (z_2) \big)^c \big( \iu \overline{S}_{22}^{(1)} (z_2) \big)^d \Big] \\
&= \frac{1}{\Gamma(a) \Gamma(b)\Gamma(c) \Gamma(d) } \displaystyle\int_{\mathbb{R}_+^4} t^{a-1} s^{b-1} x^{c-1} y^{d-1} \exp\left(  \iu t z_1 + \iu x z_2 -  \iu s \overline{z_1} - \iu y \overline{z_2} \right)\\
& \quad \times \E \Bigg[ \exp \bigg(  \iu \sum_{k\notin \{ 1,2 \} }  Z^2_{1k} \big(tR^{(12)}_{kk}  -  s\overline{R}^{(12)}_{kk} \big)  \bigg) \exp \bigg(  \iu \sum_{k \notin \{ 1, 2 \} }  Z^2_{2k} \big( x R^{(12)}_{kk}  -  y\overline{R}^{(12)}_{kk}  \big)  \bigg) \Bigg] \, dt \,ds \, dx \, dy.
\end{flalign}

We now condition on $\{ h_{ij} \}_{i, j \notin \{ 1, 2 \}}$, which makes the two exponential terms in the previous line conditionally independent. Then by following \eqref{zjkgk}, \eqref{e:514}, \eqref{e:516}, \eqref{stgamma}, and \eqref{sjjasjjb}, we obtain
\begin{flalign}
\E & \Big[ \big( - \iu S_{11}^{(2)} (z_1) \big)^a \big( \iu \overline{S}_{11}^{(2)} (z_1) \big)^b \big( - \iu S_{22}^{(1)} (z_2) \big)^c \big( \iu \overline{S}_{22}^{(1)} (z_2) \big)^d \Big] \\
&= \frac{1}{\Gamma(a) \Gamma(b)\Gamma(c) \Gamma(d) } \displaystyle\int_{\mathbb{R}_+^4} t^{a-1} s^{b-1} x^{c-1} y^{d-1} \exp\left(  \iu t z_1 + \iu x z_2 -  \iu s \overline{z_1} - \iu y \overline{z_2} \right)\\
& \quad \times \Bigg( \exp \Bigg( -  \frac{N-2}{N} \bigg( (t^2 + s^2)^{\alpha/4}  \gamma_{z_1}^\star\eer \Big( \displaystyle\frac{t + \iu s}{\sqrt{t^2 + s^2}} \Big) + (x^2 + y^2)^{\alpha/4}  \gamma_{z_2}^\star\eer  \bigg( \displaystyle\frac{x + \iu y}{\sqrt{x^2 + y^2}} \bigg) + O (N^{-c}) \bigg)  \Bigg) \\
& \qquad + O (N^{-10}) \Bigg) \, dt \,ds \, dx \, dy.
\end{flalign}

\noindent Thus \eqref{imaginarygammaestimate} (with the $z_N$ there equal to $z_1$ and $z_2$ here), \eqref{stintegral}, the dominated convergence theorem, \eqref{y}, and \Cref{rxry} together give 
\begin{flalign}
& \displaystyle\lim_{N \rightarrow \infty} \E \Big[ \big( - \iu S_{11}^{(2)} (z_1) \big)^a \big( \iu \overline{S}_{11}^{(2)} (z_1) \big)^b \big( - \iu S_{22}^{(1)} (z_2) \big)^c \big( \iu \overline{S}_{22}^{(1)} (z_2) \big)^d \Big] \\
& \quad = \frac{1}{\Gamma(a) \Gamma(b)\Gamma(c) \Gamma(d) } \displaystyle\int_{\mathbb{R}_+^4} t^{a-1} s^{b-1} x^{c-1} y^{d-1} \exp \big(  \iu E (t - s + x - y) \big)\\
& \qquad \times \exp \bigg( - (t^2 + s^2)^{\alpha/4} \gamma^\star_E \Big( \displaystyle\frac{t + \iu s}{\sqrt{t^2 + s^2}} \Big) - (x^2 + y^2)^{\alpha/4} \gamma^\star_E \Big( \displaystyle\frac{x + \iu y}{\sqrt{x^2 + y^2}} \Big) \bigg) \, dt \,ds \, dx \, dy \\
& \quad = \mathfrak{Y}_{a + b} (a) \mathfrak{Y}_{c + d} (c) = \displaystyle\lim_{\eta \rightarrow 0} \mathbb{E} \big[ \rstar (E + \iu \eta)^a \overline{\rstar} (E + \iu \eta)^b \big] \mathbb{E} \big[ \rstar (E + \iu \eta)^c \overline{\rstar} (E + \iu \eta)^d \big],
\end{flalign}

\noindent from which we deduce \eqref{s12rabcd}. 
\end{proof}

\begin{proof}[Proof of \Cref{t:main1}.]
	
	We will apply \Cref{l:diagElimit}, with the $\eta_N$ there equal to the $\eta = N^{\mathfrak{c} - \mathfrak{a}}$ here (recall \eqref{e:parameters}) and the $E^{(j)}_N$ there equal to the $\widehat \gamma_{k_j}$ here. To that end, we must verify the assumptions \eqref{ehypothesis} of that proposition. The first and second statements there follow from the fact that  $\eta = N^{\mathfrak{c} - \mathfrak{a}}$, that $\mathfrak{c}$ is sufficiently small, and the fact that $\mathfrak{a} < \frac{1}{2}$ (by \eqref{e:parameters}). The third follows from the fact that $\lim_{N \rightarrow \infty} \gamma_{k_1} = E$ and \eqref{e:gammacompare}.
	
	To verify the fourth, we must show that $| \widehat \gamma_{k_1} - \widehat \gamma_{k_j} | \ll \eta$. To that end, observe since $| k_1 - k_j| \le N^{1/2}$, we have for any fixed $\delta > 0$ and $j \in [1, n]$ that
\begin{flalign}
| \widehat \gamma_{k_1} - \widehat \gamma_{k_j} | & \le \big|\widehat \gamma_{k_1} - \gamma_{k_j} (t) \big| + \big|\widehat \gamma_{k_j} - \gamma_{k_j} (t) \big| + \big| \gamma_{k_1} (t) - \gamma_{k_j} (t) \big| \\
& \dom N^{\delta  - 1/2} + N^{1 + 4 \mathfrak{c}} | k_1 - k_j| \dom N^{\delta + 4 \mathfrak{c} - 1/2}
\end{flalign} 

\noindent for sufficiently large $N$, where we used \eqref{e:newrigidity} and \eqref{e:xtdispersion}. Then, since $\eta \gg N^{-1/2}$, we may choose $\delta$ and $\mathfrak{c}$ small enough that the last bound in \eqref{ehypothesis} of \Cref{l:diagElimit} is also satisfied.
 
 Now the theorem follows from \Cref{l:offdiagonal}; \Cref{l:diagElimit}; the facts that $\lim_{N \rightarrow \infty} \Im m_{\alpha} (\gamma_k + \mathrm{i} \eta) = \pi \rho_{\alpha} (E)$ (as $\lim_{N \rightarrow \infty} \gamma_k = E$) and $\mathcal{U}_{\star} (E) = \big( \pi \varrho_{\alpha} (E) \big)^{-1} \mathcal{R}_{\star} (E)$ (see \Cref{d:arbitrary}); and \eqref{e:mabounds}. 
\end{proof} 

\appendix

\section{Proofs of results from \Cref{s:preliminary} and \Cref{s:scaling}} \label{s:appendixA}

In this section, we prove results from \Cref{s:preliminary} and \Cref{s:scaling} which are used in the rest of the paper. We begin with the proof of  \Cref{l:gammalimitlemma}, since facts derived in the course of that proof will be useful for proving the statements from  \Cref{s:preliminary}.
\eer

	For any $w \in \mathbb{C}$, we let $\mathcal{H}_w$ denote the space of $\mathcal{C}^1$ functions $g\colon \mathbb{K}^+ \rightarrow \mathbb{C}$ such that $g (\lambda u) = \lambda^w g(u)$ for each $\lambda \in \mathbb{R}_{+}$. Following \cite[(10)]{bordenave2017delocalization}, we define for any $r \in [0, 1)$ a norm on $\mathcal{H}_w$ by 
	\begin{flalign}
	\| g \|_r = \| g \|_{\infty} + \sup_{u \in \mathbb{S}_+^1} \sqrt{\big| (\iu . u)^r \partial_1 g (u) \big|^2 + \big| ( \iu . u )^r \partial_2 g (u) \big|^2 },
	\end{flalign}
	
	\noindent where  $\partial_1 g (x + \iu y) = \partial_x g (x + \iu y)$ and $\partial_2 g (x + \iu y) = \partial_y g (x + \iu y)$, and we recall $\| g \|_{\infty} =  \sup_{u \in \mathbb{S}_+^1} \big| g(u) \big|$. We let $\mathcal H_{w, r}$ denote the closure of $\mathcal H_w$ in $\| \cdot \|_{r}$, which is a Banach space.
	
	Following \cite[(11), (12), (13)]{bordenave2017delocalization}, we define for any complex numbers $u \in \mathbb{S}_+^1$ and $h \in \overline{\mathbb{K}}$, and any function $g \in \mathcal{H}_{\alpha / 2}$, the function 	
	\begin{flalign}
	F_{h}(g) (u) = \displaystyle\int_0^{\pi / 2} \Bigg( \displaystyle\int_{\mathbb{R}_+^2} & \bigg(\Big( e^{ -r^{\alpha / 2} g (e^{\iu \theta}) - (rh . e^{\iu \theta})} - e^{ -r^{\alpha / 2} g (e^{\iu \theta} + uy) - (yrh . u) - (rh . e^{\iu \theta})}  \Big)  \\
	& \times r^{\alpha / 2 - 1} y^{- \alpha / 2 - 1} dr  dy   \bigg) \Bigg) (\sin 2 \theta)^{\alpha / 2 - 1} d \theta.
	\end{flalign}
	
	\noindent Further, for any $z \in \cplus$, the map $G_z(f)\colon \mathbb S^1_+ \rightarrow \mathbb C$ is given by
	\beq\label{e:Gdef}
	G_z(f)(u)  = \frac{\alpha}{2^{\alpha/2} \Gamma(\alpha/2)^2}F_{ - \iu z} (f) ( \iu \bar u).
	\eeq
	
	The following lemma from \cite{bordenave2017delocalization} indicates that the function $\gstar$ from \Cref{d:gstar} is a fixed point of $G_z$. 
	
	\bel[{\cite[Lemma 4.4]{bordenave2017delocalization}}]\label{l:gstarfixedpoint} For any $z \in \mathbb H$ and $u \in \mathbb{S}_+^1$, we have $\gstar(u) = G_z (\gstar)(u)$.
	\eel

\begin{proof}[Proof of \Cref{l:gammalimitlemma}]

For the first statement, we use \cite[Proposition 3.3]{bordenave2017delocalization}, which shows that there exists $c >0$ such that, uniformly in $|z| <c$, 
\beq\label{e:eipi4}
\frac{\gamma^\star_z ( e^{\iu \pi /4})}{\Gamma\left(1 - \frac{\alpha}{2} \right)}  = 2^{\alpha/4}  \E \left[ \big(\Im R_\star(z) \big)^{\alpha/2} \right] > c.
\eeq
We now compute, for any $u \in \mathbb S^1_+$,
\begin{flalign}
\frac{\Re \gamma^\star_z(u)}{\Gamma\left(1 - \frac{\alpha}{2} \right)} &= \E \left[  \Re \big( - \iu \rstar(z).u \big)^{\alpha/2} \right] \ge \E \left[  \Big( \Re  \big(- \iu \rstar(z).u  \big) \Big)^{\alpha/2} \right] \ge \E \left[ \big(\Im R_\star(z) \big)^{\alpha/2} \right] \ge c.
\end{flalign}
In the first inequality, we used the fact that $\Re a^r \ge (\Re a)^r$ for any $a \in \mathbb K$ and $r \in (0,1)$ (see \cite[Lemma 5.10]{bordenave2017delocalization}). The second inequality follows from $\Re (a. u) \ge \Re a$ for any $u \in \mathbb S^+_1$ and $a\in \mathbb K^+$. The final inequality follows from \eqref{e:eipi4}. This completes the proof of the first claim.

Set $z = E + \iu \eta$.  We now establish convergence of the order parameter $\gstar$ as $\eta\rightarrow 0$. 
We note that for any $\tau >0$, there exists $c = c(\tau ) >0$ such that 
\beq\label{e:input}
\| \gamma_z^\star - \gamma_w^\star \|_r \le  \| \gamma_z^\star - \gamma_0^\star \|_r +  \| \gamma_w^\star - \gamma_0^\star \|_r \le \tau
\eeq
if $z,w \in \mathbb H$ satisfy $|z| < c$ and $|w| <c$. The final inequality follows from the first displayed equation in the proof of \cite[Proposition 3.3]{bordenave2017delocalization}.

Then \eqref{e:input} and \cite[Proposition 3.4]{bordenave2017delocalization} together imply there exist constants $C, c>0$ such that
\beq\label{e:lipschitzcombining}
\| \gamma_w^\star  - \gstar \|_r \le C \| \gamma_w^\star - G_z (\gamma_w^\star) \|_r =  C \big \| G_w(\gamma_w^\star)  - G_z(\gamma_w^\star)\big \|_r
\eeq
for $z,w \in \mathbb H$ such that $|z| < c$ and $|w| <c$. In the equality, we used that $\gamma_w^\star$ is a fixed point for $G_w$, as stated in \Cref{l:gstarfixedpoint}.

We now claim 
\beq\label{e:gwgstar}
 \big\| G_w(\gamma_w^\star)  - G_z(\gamma_w^\star)\big \|_r \le C | w - z|.
\eeq
In the proof of \cite[Lemma 4.2]{bordenave2017delocalization} (in the final lines), it was shown that the partial (Fr\'echet) derivative of $F_h(g)$ in either the real or imaginary part of $h$ has finite $\| \cdot \|_r$ norm, and the exact derivative was calculated. Further, the derivative may be bounded in the $\| \cdot \|_r$ norm by a constant $C$ using \cite[(20)]{bordenave2017delocalization} when $g = \gamma^\star_w$, which is uniform in $z,w$ with $z,w \in \mathbb {H}$ and $|z|, |w| \le c$.\footnote{We remark that although the constant $C$ in the bound \cite[(20)]{bordenave2017delocalization} depends on $\Re g$ and degenerates as $\Re g$ goes to zero, here $g = \gamma_w^\star$ and $\inf_{u \in \mathbb S^+} \Re g(u) > c$ for some $c >0$, from \eqref{e:starclaim2}. So we obtain the claimed bound.} Also, it is continuous by the computation following \cite[(21)]{bordenave2017delocalization}. By definition \eqref{e:Gdef}, the same is true for $G_h(g)$, and we obtain \eqref{e:gwgstar} by integration using the fundamental theorem of calculus. Combining this with \eqref{e:lipschitzcombining} we obtain the Lipschitz estimate 
\beq\label{e:gE}
\| \gamma^{\star}_w  - \gstar \|_r \le C | w -z |
\eeq
for any $r \in [0, 1)$. This estimate implies that $\lim_{\eta \rightarrow 0} \gamma_{E + \iu \eta}^\star$ exists as a function in $\mathcal H_{\alpha/2, r}$, which we denote by $\gamma^\star_E$. 
\end{proof}

\begin{proof}[Proof of \Cref{l:mabounds}]

We begin with the first estimate of \eqref{e:rhoacontinuity}. By \cite[(3.4)]{bordenave2011spectrum} and \cite[Theorem 4.1]{bordenave2011spectrum}, we have $\Im m_\alpha(z) = \E \big[\Im R_\star(z)\big]$ for ${z \in \mathbb H}$. Recalling  $\lim_{\eta \rightarrow 0} \Im m_\alpha(E + \iu \eta ) =  \pi \varrho_\alpha(E)$, it then suffices to show that ${\lim_{\eta \rightarrow 0}  \E \big[\Im R_\star(E + \iu \eta )\big]}$ is Lipschitz in $E$. For $c$ small enough, by \eqref{rlimitp}, this limit is given by $(\mathfrak{X} + \overline{\mathfrak{X}})/2$, where
\beq
\mathfrak{X}(E)=  \displaystyle\int_{\mathbb{R}_+}  \exp \big( \iu E t - t^{\alpha / 2} \gamma^\star_E (1) \big) \, dt.
\eeq
Further, if $c$ is small enough, by \Cref{l:gammalimitlemma} we have the uniform lower bound 
\beq \inf_{|E| \le c} \inf_{u \in \mathbb S^1_+} \Re \gamma^\star_E(u) > c'.\eeq Define

\beq
F(x,y,w)= \displaystyle\int_{\mathbb{R}_+}  \exp \big( \iu x t - t^{\alpha / 2} (y + \iu w) \big) \, dt.
\eeq
By \eqref{e:gE}, $\big| \gamma^\star_{E_1}(1) - \gamma^\star_{E_2}(1) \big| \le C | E_1 - E_2|$ for some constant $C$ and $E_1, E_2 \in [ -c, c]$, if $c$ is small enough.  Using this inequality, to show $\mathfrak{X}(E)$ is Lipschitz in $E$, it suffices by the fundamental theorem of calculus to show the partial derivatives $\partial_x F(x,y,w)$, $\partial_y F(x,y,w)$, and $\partial_w F(x,y,w)$ are uniformly bounded by a constant when $|x| \le c$ and $ y > c'$. This follows straightforwardly after differentiating under the integral sign (which is permissible as the integrand is dominated in absolute value by $\exp( - t^{\alpha/2} y ) \le \exp( - t^{\alpha/2} c' )$).

We have shown that the density $\varrho_\alpha(x)$ is continuous in a neighborhood of zero. By \cite[Theorem 1.6(ii)]{bordenave2011spectrum}, $\varrho_\alpha(0)$ is positive and bounded. Therefore the second claim in \eqref{e:rhoacontinuity} follows from the first after possibly decreasing $c$.

For \eqref{e:macontinuity}, we first suppose $|E_1 - E_2 | \le c/10$ and $|E_1| \le c/2$, where $c$ is the constant from the previous part of this proof. We use the definition of the Stieltjes transform to write

\begin{align}
\Im m_\alpha\big( E_1 + \iu \eta \big) =   \Im \int_{\mathbb R} \frac{\varrho_\alpha(x) \, dx}{ x -  E_1 - \iu \eta }, \quad \Im m_\alpha\big( E_2  + \iu \eta \big) =   \Im \int_{\mathbb R} \frac{\varrho_\alpha\big(x +  E_2 - E_1 \big) \, dx}{ x - E_1 - \iu \eta },
\end{align}
where in the second equality we used the change of variables $x \mapsto x + E_2 - E_1$. Computing the imaginary part of the integrand directly, we have
\begin{align}
\left| \Im m_\alpha\big( E_1 + \iu \eta \big) - \Im m_\alpha\big( E_2  + \iu \eta \big) \right| &\le    \eta  \int_{\mathbb R} \frac{\big| \varrho_\alpha(x)  - \varrho_\alpha (x +  E_2  - E_1) \big| }{ (x -   E_1)^2  + \eta^2 }\, dx\\
&\label{e:term1} =  \eta  \int_{[-c/2,c/2 ]} \frac{\big| \varrho_\alpha(x)  - \varrho_\alpha (x +  E_2  - E_1) \big| }{ (x -   E_1 )^2  + \eta^2 }\, dx\\
&\label{e:term2} +  \eta  \int_{[-c/2,c/2 ]^c} \frac{\big| \varrho_\alpha(x)  - \varrho_\alpha \big(x +  E_2  - E_1) \big| }{ (x -   E_1)^2  + \eta^2 }\, dx.
\end{align}
Using \eqref{e:rhoacontinuity}, $|x| \le c/2$, and $|E_1 - E_2| \le c/10$, we find that 
\beq
\eqref{e:term1} \le C | E_1 - E_2| \int_{\mathbb R} \frac{\eta}{(x -   E_1)^2  + \eta^2}\, dx \le C | E_1 - E_2|.
\eeq
By \cite[Theorem 1.6]{bordenave2011spectrum}, the density $\varrho_\alpha(x)$ is uniformly bounded. Therefore 
\begin{align}
\eqref{e:term2} \le  C \int_{[-c/2,c/2 ]^c} \frac{\eta }{ (x -   E_1 )^2  + \eta^2 }\, dx &= C \eta  \int_{[-c/2,c/2 ]^c} \frac{1 }{ (x -   E_1)^2  }\, dx \\
& \le C c^{-1} \eta.
\end{align}
In the last line, we used the hypothesis that $|E| \le c/2$ to show the integral is uniformly bounded and adjusted the value of $C$.
This completes the proof of \eqref{e:macontinuity} after decreasing $c$ if necessary.

We now address the first claim of \eqref{e:mabounds}. Again using the uniform boundedness of $\varrho_\alpha(x)$ over all $x \in \mathbb R$, we deduce 
\beq 
\Im m_\alpha(E + \iu \eta ) = \int_{\mathbb R} \frac{\eta \varrho_\alpha(x)\, dx }{\eta^2 + (E-x)^2 } \le C.
\eeq
We also note, again using \eqref{e:rhoacontinuity}, that 
\beq
\Im m_\alpha(E + \iu \eta ) \ge  \int_{[ - c, c ] } \frac{\eta \varrho_\alpha(x)\, dx }{\eta^2 + (E-x)^2 } \ge c \int_{[ -c, c ] } \frac{\eta\, dx }{\eta^2 + (E-x)^2 },
\eeq
and the latter quantity is uniformly bounded below $\eta$ tends to zero because $E \in (-c,c)$. Thus, after decreasing $c$ if necessary, we have $\Im m_\alpha(E + \iu \eta ) > c$ when $\eta, |E| < c$. 

The second claim of \eqref{e:mabounds} follows after noting (using the bounds on the density $\varrho_\alpha$ given in \eqref{e:rhoacontinuity}) that for any $c$, there exists $c' >0$ such that $\big|\gamma_i^{(\alpha)}\big| < c$ for all $i\in [ ( 1/2 - c') N , (1/2 + c') N]$. 
\end{proof}

\begin{proof}[Proof of \Cref{l:hgammarigid}]
We prove only the first claim in detail. The proof of the second is analogous, and the third follows from the second by \eqref{e:xtrigidity}. 

By a standard stochastic continuity argument, it suffices to prove the desired bound holds with overwhelming probability at fixed $\gamma$; all bounds below will be independent of $\gamma$. Let $C>0$ be a parameter. A straightforward calculation shows that for any $1 \le i \le N$, the law of the sum of the absolute value of the entries of the $i$-th row and column of the matrix $\bH^\gamma$ has a power law tail with parameter $\alpha$. This implies, by Hoeffding's inequality, that with overwhelming probability there are at most $2C^{-\alpha}N$ such $i$ whose corresponding sum is greater than $C$. When this holds, after removing at most $2C^{-\alpha}N$ rows and columns, the largest eigenvalue is at most $C$ in absolute value, since the largest absolute value of a row of a matrix bounds the magnitude of its largest eigenvalue. Then eigenvalue interlacing \cite[Lemma 7.4]{erdos2017dynamical} implies that there are at most $4C^{-\alpha}N$ eigenvalues of $\bH^\gamma$ outside of the interval $[-C,C]$.

By \cite[Theorem 1.1]{arous2008spectrum} (or \cite[Theorem 1.2]{bordenave2011spectrum}),
for any fixed compact interval $I \subset \mathbb R$, 
\beq\label{e:intervallimit}
 \E \left[{\mu_N(I)} \right] \rightarrow {\mu_\alpha(I)} 
\eeq
 as $N$ tends to $\infty$. Here $\mu_N = \mu_N^{(\gamma)}$ denotes the empirical spectral distribution of $\bH^\gamma$.

For any $C > 0$, let $I_C =  [c_1/2, C]$. Then \eqref{e:intervallimit} and the concentration estimate \cite[Lemma C.1]{bordenave2013localization} imply that for any choice of $C$ there exists $N(C)$ such that
\beq\label{e:icestimate}
\mu_N(I_C) \le C^{-1}  +  \mu_\alpha(I_C)
\eeq 

\noindent holds for any $N > N(C)$ with overwhelming probability. By the symmetry of $\mu_{\alpha}$ and the second estimate in \eqref{e:rhoacontinuity}, we have $\mu_\alpha(I_C) \le (1/2 - \delta)$ for some $\delta = \delta(c_1) > 0$ such that $\lim_{c_1 \rightarrow 0} \delta (c_1) = 0$. Combining \eqref{e:icestimate} with the estimate for eigenvalues lying outside $[-C,C]$, we find 
\beq
\mu_N\big([ c_1, \infty)\big) \le C^{-1}  +  (1/2 - \delta) + 4 C^{-1/\alpha} < 1/2 - \delta/2
\eeq
for large enough $C$, with overwhelming probability. Then the $(1/2 - \delta/2)N$-th eigenvalue is less than $c_1$ with overwhelming probability. A similar argument shows that the $(1/2 + \delta/2)N$-th eigenvalue is greater than $c_1$ with overwhelming probability. This completes the proof. \end{proof}

Before proceeding to the proof of \Cref{l:newrigidity}, we require the following preliminary lemma. We recall $m_N(s,z)$ and its expectation $ \widehat m_N (s,z)= \E\big[\widehat m_N(s,z)\big]$ from \eqref{rsz}, and $\mu_s$ from \eqref{xsempiricalmeasure}. Let $\widehat \mu_s = \mathbb{E} [\mu_s]$, which is symmetric about the origin. We further define the counting functions
\beq
n_s(E) = \frac{1}{N} \Big| \big\{i :  \lambda_i(s) \le E  \big\}  \Big| = \mu_s \big (  (-\infty, E]  \big), \quad \widehat n _s (E)  = \widehat \mu_s \big (  (-\infty, E]  \big).
\eeq

\bel Retain the notation of \Cref{l:newrigidity}. There exists a constant $c_0 > 0$ such that, for any $E \in [ - c_0, c_0]$, we have with overwhelming probability that
\beq\label{concentrationcounting}
\sup_{s \in [ N^{-1/2 + \delta}, N^{-\delta} ] }  \big| n_s(E ) - \widehat n_s(E )  \big| \le  N^{\delta- 1/2}. 
\eeq 
\eel
\begin{proof}

This proof will largely follow the calculations of \cite[Section 7.3]{landon2017convergence}, with some modifications to account for the fact that the spectral distributions we consider are not compactly supported.

To that end, we begin with a tail bound on the smallest eigenvalue, $\lambda_1 = \lambda_1 (s)$, of $\mathbf X_s$. A straightforward calculation shows that, for any $1 \le i \le N$, the law of the sum of the absolute value of the entries of the $i$-th row and column of the matrix $\textbf{X}_s$ has a power law tail with parameter $\alpha$. Thus, since the largest such sum bounds $|\lambda_1|$ above, we deduce for any $t > 1$ that
\beq\label{e:smallestev}
\P \big(   \lambda_1 < -t  \big) \le CN t^{- \alpha}.
\eeq

Now, we must show that \eqref{concentrationcounting} holds on an event of probability at least $1 - N^{-D}$ for any fixed $D >0$ and sufficiently large $N$. Throughout the remainder of this proof, set $B = \alpha^{-1} (D + 3)$ so, by \eqref{e:smallestev}, $\mathbb{P} ( \lambda_1 > -N^B) \ge 1  - N^{- D - 2}$. Thus, we may work on the event on which $\lambda_1 > -N^B$. 

By the Helffer--Sj\"{o}strand formula (see, for example, \cite[Chapter 11]{erdos2017dynamical}), for any smooth and compactly supported function $f\colon \mathbb R \rightarrow \mathbb R$, 
\beq\label{e:hs}
f(u ) = \frac{1}{\pi} \int_{\mathbb R^2}\frac{ \iu y f''(x) g(y) + \iu \big( f(x) + \iu y f'(x)  \big) g'(y) }{u - x - \iu y }\, dx\,dy,
\eeq
where $g$ is any smooth, compactly supported function that is $1$ in a neighborhood of $0$. Set $E_1 = - N^{4B}$ and fix some $E_2 \in [ -(2K)^{-1}, (2K)^{-1}]$, where $K$ is the constant from \Cref{l:thatlaw}. Let $\eta = N^{-1/2 + \delta}$, and let $f$ be a smooth function satisfying $f(E) = 0$ for $E \notin [ E_1 - 1,  E_2 + \eta]$ and $f(E)=1$ for $E \in [E_1 , E_2]$. 
We can select $f$ such that $\big| f(x) \big|\le 1$ for all $x \in \mathbb{R}$; $\big| f'(x) \big|\le C$ and  $\big| f''(x) \big| \le C$ for $x \in [ E_1 - 1, E_1]$; and $\big| f'(x) \big|\le C\eta^{-1}$ and $\big| f''(x) \big| \le C\eta^{-2}$ for $x \in [E_2, E_2 + \eta]$. We also let $g(y)$ be a smooth function satisfying $g(y) = 1$ for $|y| \le N^{10 B}$; $g(y) = 0$ for $|y| > N^{10 B} + 1$; we may select $g$ such that $0 \le  g(y) < 1$ and $\big| g' (y) \big| < C$ for all $y \in \mathbb R $.

Write $\mu_\Delta = \mu_s - \widehat \mu_s$ and let $m_\Delta(z) =   m_N(s,z) - \widehat m_N (s,z)$ be the Stieltjes transform of $\mu_\Delta$. Our first goal is to prove that 
\beq\label{e:Deltagoal}
\left| \int_{\mathbb R} f(E)\,d\mu_\Delta(E) \right| \le C N^{\delta/2 - 1/2}
\eeq
with probability at least $1 - N^{-D - 1}$, for large enough $N$. Using\eqref{e:hs}, we find 
\begin{align}
\left| \int_{\mathbb R} f(E)\,d\mu_\Delta(E) \right| 	& \le C  \left| \int_{\mathbb R^2} y f''(x) g(y)    \Im m_\Delta(x + \iu y) \, dx \,dy \right| \label{aterm1}  \\ 
&  \label{aterm2} \qquad +  C   \int_{\mathbb R^2} \big| f(x) g'(y) \big|   \big| \Im m_\Delta(x + \iu y)\big|  \, dx \,dy  \\
&\label{aterm3} \qquad +  C   \int_{\mathbb R^2} \big| y f'(x) g'(y) \big|   \big| \Re m_\Delta(x + \iu y)\big|  \, dx \,dy .
\end{align}

\noindent Now, since \cite[(5.13)]{aggarwal2018goe} states 
\beq\label{e:mconcentrate}
\P \left[ \big| m_N(s,z)  - \widehat m_N (s,z)   \big|  > \frac{4\log N }{N^{1/2} \Im z} \right] \le 2 \exp \big( - ( \log N)^2 \big),  
\eeq

\noindent we have (by a standard stochastic continuity argument) with overwhelming probability that 
\beq\label{deltaconcentrate}
\displaystyle\sup_{|y| \le N^{20B}} \big| y m_\Delta(x + \iu y ) \big| \le 5 N^{-1/2} \log N.
 \eeq
 
 \noindent Now let us bound the quantities \eqref{aterm1}, \eqref{aterm2}, and \eqref{aterm3}. We begin with the latter. To that end, observe that since $\supp f' \subseteq [E_1 - 1, E_1] \cup [E_2, E_2 + \eta]$; since $\supp g' \subseteq [-N^{10B} - 1, -N^{10B}] \cup [N^{10B}, N^{10B} + 1]$; since $\big| f' (x) \big|\le C$ for $x \in [E_1 - 1, E_1]$; since $\big| f'(x) \big| \le C \eta^{-1}$ for $x \in [E_2, E_2 + \eta]$; and since $\big| g'(y) \big| \le C$, we have by \eqref{deltaconcentrate} that $\eqref{aterm3} \le C (\log N) N^{-1/2}$ with overwhelming probability. 
 
 Similarly, to bound \eqref{aterm2}, observe that $\big| f(x) \big|\le 1$; that $\big| g'(x) \big| \le C$; that $\supp f$ is contained in the interval $[-N^{4B}, N^{4B}]$ of length at most $2 N^{4B}$; and that $\supp g' \subset [- N^{10B} - 1, - N^{10B}] \cup [N^{10B}, N^{10B} + 1]$, on which we have $\big| m_{\Delta} (z) \big| \le N^{-10B}$ (due to the deterministic bound $\big|m_{\Delta} (z)\big| < |\Im z|^{-1}$). Together, these yield the deterministic estimate  $\eqref{aterm2} \le N^{-1}$. 
 
 It therefore suffices to bound \eqref{aterm1}, to which end we write
 \begin{align}
 \label{b1}  \eqref{aterm1} &  \le   \left| \int_{|x-E_1| \le 2, |y| \le 10 } y f''(x) g(y)    \Im m_\Delta(x + \iu y) \, dx \,dy \right| \\ 
 \label{b2} & \qquad +  \left| \int_{|x-E_1| \le 2, |y| > 10} y f''(x) g(y)    \Im m_\Delta(x + \iu y) \, dx \,dy \right| \\ 
 \label{b3} & \qquad+  \left| \int_{|x-E_2| \le 2 \eta, |y| \le \eta} y f''(x) g(y)   \Im m_\Delta(x + \iu y) \, dx \,dy \right| \\ 
\label{b4} &  \qquad+  \left| \int_{|x-E_2| \le 2 \eta, |y| > \eta} y f''(x) g(y)  \Im m_\Delta(x + \iu y) \, dx \,dy \right|.
 \end{align}
 
 \noindent We must bound the terms \eqref{b1}, \eqref{b2}, \eqref{b3}, and \eqref{b4}; we begin with the former. Since we restricted to the event of probability $1 - N^{-D - 2}$ on which $\lambda_1 > -N^B$, and since $E_1 = N^{4B}$, the definition of $m_N(s,z)$ shows that $\big|m_N(s,x + \iu y) \big| \le C N^{- 4B}$ for $x \in [E_1 - 2, E_1 + 2]$. Using the trivial bound $\eqref{kij}$ on the complementary event and then taking expectation shows that $|\widehat m_N (s,x + \iu y)| \le C N^{-4B} + N^{-D - 2} y^{-1}$ for $x \in [E_1 - 1, E_1 + 1]$, if $D$ is sufficiently large. Hence, with probability $1 - N^{-D - 2}$, we have $\big| m_\Delta(x + \iu y) \big| \le N^{-4B} + N^{-D} y^{-1}$ for $x \in [E_1 - 2, E_1 + 2]$. Combining this with the fact that $\big| f'' (x) \big|$ for $x \in [E_1 - 1, E_1 + 1]$, we obtain $\eqref{b1} \le N^{-1}$.
 
 To estimate \eqref{b2}, we first integrate by parts in $x$, using the identity $\partial_x \Im m_\Delta = - \partial_y \Re m_\Delta$ and the fact that $\supp f' \subseteq [E_1 - 1, E_1 + 1]$ to deduce
 \begin{flalign} 
 \eqref{b2} = \left|  \int_{|x - E_1 | \le 2, |y| > 10} y f'(x) g(y) \partial_y\big( \Re m_\Delta(x + \mathrm{i} y) \big) \, dx \, dy\right|.
\end{flalign}
 
 \noindent Integrating by parts in $y$ and using the fact that $\partial_y \big( y g(y) \big) = g(y) + y g' (y)$ then gives 
\begin{align}
\label{c1}  \eqref{b2} & \le \left|  \int_{|x - E_1 | \le 2, |y| > 10 }  f'(x) \big(g(y) + y g'(y) \big) \Re m_\Delta(x + \mathrm{i} y) \, dx \, dy\right| \\
& \label{c2} \qquad + \left|  \int _{|x-E_1| \le 2 }  f'(x) 10 g(10) \Re m_\Delta(x  + 10 \iu) \,dx \right| .
\end{align} 

\noindent To bound \eqref{c2}, we use \eqref{deltaconcentrate} and the fact that $\big| f' (x) \big| \le C$ for $x \in [E_1 - 2, E_1 + 2]$ to deduce that $\eqref{c2} \le C (\log N )N^{-1/2}$ with overwhelming probability. To estimate \eqref{c1}, we again use the facts that $\big| f' (x) \big| \le C$ for $x \in [E_1 - 2, E_1 + 2]$; that $\supp g \subseteq [-N^{10B} - 1, N^{10B} + 1]$; that $0 \le g(y) \le 1$; that $\supp g' \subseteq [N^{10B}, N^{10B} + 1]$; $g' (y) \le C$; and \eqref{deltaconcentrate} to deduce 
\begin{align}
\eqref{c1} & \le C N^{-1/2} \log N \left|  \int_{|y| > 10 } |y|^{-1} \big(g(y) + y g'(y) \big) \, dy\right| \\
& \le C N^{-1/2} \log N \Bigg( \bigg| \int_{10}^{N^{11B}} |y|^{-1} dy \bigg| + \bigg| \int_{|y| > 10} g'(y) \, dy \bigg| \Bigg) \le C N^{-1/2} (\log N)^3 \label{estimatesumintegral},
\end{align} 

\noindent with overwhelming probability and for sufficiently large $N$. Hence, $\eqref{b2} \le C N^{-1/2} (\log N)^3$.

To bound \eqref{b3}, first recall that the function $y \Im m ( x + \iu y)$ is increasing in $y$, for any Stieltjes transform $m$ of a positive measure. Therefore, \eqref{deltaconcentrate} implies with overwhelming probability that 
\beq
\displaystyle\sup_{y \le \eta} y \Im m_\Delta ( x + \iu y ) \le \eta  \Im m_\Delta(x + \iu \eta) \le C N^{-1/2} \log N
\eeq

\noindent Putting this estimate into \eqref{b3} and using that $\big| f''(x) \big|$ vanishes except on $[E_2, E_2 + \eta]$, where it is at most $C\eta^{-2}$, and that $\big| g(y) \big|$ is $1$ for $|y| \le \eta$, we deduce that $\eqref{b3} \le  C (  \log N  ) N^{-1/2}$.  

For the term \eqref{b4}, we integrate by parts as we did for \eqref{b2} to obtain 
\begin{align}
\label{d11} \eqref{b4} & \le \left|  \int_{|x - E_2 | \le 2 \eta, |y| > \eta}  f'(x) \partial_y\big( g(y) + y g' (y) \big) \Re m_\Delta(x + iy) \, dx \, dy\right| \\
 \label{d21} & \qquad + \left|  \int _{|x-E_2| \le 2 \eta}  f'(x) \eta g(\eta) \Re m_\Delta(x  +  \iu \eta ) \,dx \right| .
\end{align} 

\noindent We use \eqref{deltaconcentrate} to estimate $\eqref{d11} \le C (\log N)^3 N^{-1/2}$ and $\eqref{d21} \le C (\log N) N^{-1/2}$, in the same way we bounded \eqref{c1} (in \eqref{estimatesumintegral}) and \eqref{c2}, except now we note that $f'(x)$ vanishes off of $x \in [E_2 , E_2 + \eta]$, where it is at most $C\eta^{-1}$. This shows $\eqref{b4} \le C (\log N)^3 N^{-1/2}$, and so $\eqref{aterm1} \le C (\log N)^3 N^{-1/2}$, with overwhelming probability. Combining our estimates on \eqref{aterm1}, \eqref{aterm2}, and \eqref{aterm3}, we deduce \eqref{e:Deltagoal} holds with probability at least $1 - N^{-D - 1}$. 

We now use \eqref{e:Deltagoal} to estimate the difference between the eigenvalue counting functions for the measures $\mu_s$ and $\widehat \mu_s$. 
We recall that we are working on the set of probability at least $1 - N^{-D - 2}$ where $\lambda_1  >  - N^{B}$. Therefore, recalling the definition of $f(x)$, we see $n_s(E_2) \le \int_{-\infty}^{\infty} f(x) \, d\mu_s(x) \le n_s (E_2 + \eta)$ for any $E_2 \in [ -(2K)^{-1}, (2K)^{-1}]$ on this event. The case of $\widehat n _s (E)$ is slightly more delicate, since we must estimate the contribution to the mass of $\widehat \mu_s $ from eigenvalues in the interval $( - \infty, - N^{B}]$. With probability at least $1 - N^{-D - 2}$, there are no eigenvalues in this interval. On the complementary event,  trivially have $\int_{-\infty}^{ - N^{B} } d\mu_s(x)  \le 1$, since $\mu_s$ is a probability measure. Therefore, $\int_{-\infty}^{ - N^{B} }d  \widehat \mu_s(x) \le N^{- 2}$ by taking expectation and using the previous two observations.

Using \eqref{e:Deltagoal} and the overwhelming probability estimate $\big | n_s(E_2 + \eta)  - n_s(E_2 )  \big| \le C \eta$ (which follows from the second inequality of \eqref{e:disperse}), we deduce that, with probability at least $1 - C N^{-D - 1}$,
\begin{align}
\widehat n_s(E_2) - n_s (E_2) \le \widehat{n}_s (E_2) - n_s (E_2 + \eta) + C \eta & \le \displaystyle\int_{\mathbb{R}} f(x) d \widehat{\mu}_s (x) + C N^{-2} - \displaystyle\int_{\mathbb{R}} f(x) d \mu_s (x) + C \eta \\
& \le C N^{\delta / 2 - 1/2} + C \eta \le C N^{\delta / 2 - 1/2}.
\end{align} 

\noindent Similarly, now using the bound $\big| \widehat n_s(E_2 + \eta)  - \widehat n_s(E_2 )  \big| \le C \eta$ (which follows from the overwhelming probability estimate $\big | n_s(E_2 + \eta)  - n_s(E_2 )  \big| \le C \eta$ after taking expectation and using the trivial bound  $\big | n_s(E_2 + \eta)  - n_s(E_2 )  \big| \le 1$ on the set where this estimate does not hold), we obtain
\begin{align}
n_s (E_2) - \widehat n_s(E_2) &  \le n_s (E_2 + \eta) - \widehat{n}_s (E_2 + \eta) + C \eta \\
 & \le \displaystyle\int_{\mathbb{R}} f(x) d \mu_s (x) + C N^{-2} - \displaystyle\int_{\mathbb{R}} f(x) d \widehat{\mu}_s (x) + C \eta \\
& \le C N^{\delta / 2 - 1/2} + C \eta \le C N^{\delta / 2 - 1/2},
\end{align} 

\noindent with probability at least $1 - C N^{-D - 1}$. Hence, with probability at least $1 - N^{-D}$, we conclude $\big| n_s (E) - \widehat n_s (E)  \big| \le N^{\delta - 1/2}$, for $|E| \le (2K)^{-1}$. 
\end{proof}

We are  now ready for the proof of \Cref{l:newrigidity}. 

\begin{proof}[Proof of \Cref{l:newrigidity}]
We start with the first claim.
By \eqref{rijestimate} and \eqref{mszestimate}, we have with overwhelming probability that 
\begin{equation}\label{fact1}
\sup_{s\in [ N^{-1/2+\delta} , N^{-\delta} ] } \sup_{z \in \widetilde{ \mathcal{D}} } \big| m_\alpha(z)  - m_N(s,z) \big |  < N^{-\alpha\delta/16}.
\end{equation}
By using \eqref{kij} on the set where this does not hold and taking expectation, we deduce the deterministic estimate
\begin{equation}\label{fact2}
\sup_{s\in [ N^{-1/2+\delta} , N^{-\delta} ] } \sup_{z \in \widetilde{ \mathcal{D}} } \big| m_\alpha(z)  - \widehat m_N (s,z) \big |  < CN^{-\alpha\delta/16}. 
\end{equation}

\noindent Next, we define 
\beq
n_\alpha(E) = \int_{-\infty}^E \varrho_\alpha (x) \, dx.
\eeq

The distribution functions $n_{\alpha}(E)  - n_{\alpha}(0)= \int_0^E \varrho_\alpha(x)\, dx$ and  $\widehat n_s(E)  - \widehat n_s(0) = \int_0^E d\widehat \mu_s(x)$ may be compared using \cite[(11.3)]{erdos2017dynamical} (see also the proof of \cite[Lemma 11.3]{erdos2017dynamical}), which by \eqref{fact2} gives 
\beq\label{rigidgoal1}
\big|  n_\alpha(E) - \widehat n_s(E)  - n_\alpha(0) + \widehat n_s(0) \big|  \le C N^{-c}
\eeq

\noindent for some $c = c(\delta) > 0$ and $E \in [ -c_0, c_0]$, where $c_0 > 0$ is sufficiently small.

There are two cases: $N$ is even and $N$ is odd. When $N$ is even, by the symmetry of the measures $\widehat \mu_s$ and $\mu_\alpha$, we see that $\widehat \gamma_{N/2}(s) = \gamma^{(\alpha)}_{N/2} = 0$. We consider this case first.
 
 We will show that for any $c_1 >0$ sufficiently small, there exists $c_2 >0$ such that, if $| i - N/2 | \le c_2 N$, then $\big| \widehat \gamma_i(s) \big | < c_1$. To that end, observe that \eqref{e:xtdispersion} implies, with overwhelming probability, 
 \beq\label{takeexpectation}
 \int_0^v  d \mu_s  \ge  cv,
 \eeq

\noindent for any $v \in [N^{\delta - 1}, c_0]$ (after decreasing $c_0$, if necessary). Taking expectation and using that $\mu_s$ is a nonnegative measure, we find
 \beq\label{geq}
 \int_0^v d \widehat \mu_s \ge  \frac{c v}{2}, \qquad \text{for any $v \in [N^{\delta - 1}, c_0]$.}
 \eeq

\noindent We may suppose by symmetry that $i \ge N/2$, so that $\widehat \gamma_i(s) \ge 0$ by the symmetry of $\widehat\mu_s(x)$.  By definition, 
 \beq\label{shownabove}
 \int_0^{\widehat \gamma_i(s) }  d\widehat \mu_s = \frac{i - N/2}{N} \le c_2.
 \eeq
If we choose $c_2 < c_1c/4$, then $\widehat \gamma_i(s) > c_1$ produces a contradiction with \eqref{geq}. By \eqref{e:rhoacontinuity}, we also have  that for any $c_1 >0$ sufficiently small, there exists $c_2 >0$ such that, if $| i - N/2 | \le c_2 N$, then $\big| \gamma_i \big | < c_1$. We take $c_2$ small enough so that $| i - N/2 | \le c_2 N$ implies $\gamma_i, \widehat \gamma_i(s) \in [-c_0, c_0]$, and consider just this set of indices in what follows. 
 
 We observe that, for $s > 0$, $\widehat \mu_s$ is absolutely continuous with respect to Lebesgue measure, since each entry of $\mathbf X_s$ is. Then, by the definition of $\gamma_i$ and $\widehat \gamma_i(s)$, 
\beq
\label{evencasedef}
 \int_0^{\widehat \gamma_i(s) } \varrho_\alpha(x)\, dx +  \int_{\widehat \gamma_i(s) } ^{\gamma_i} \varrho_\alpha(x)\, dx = \int_0^{\gamma_i } \varrho_{\alpha} (x) dx  =  \frac{i - N/2}{N} = \int_0^{\widehat \gamma_i(s) } d \widehat \mu_s.
\eeq

Using \eqref{rigidgoal1} and the fact that $\varrho_\alpha(x)$ is bounded below on $[-K, K]$ by some constant $c'$ by \eqref{e:rhoacontinuity}, we see 
\beq
\label{gammaigammaisestimate}
N^{-c } \ge  \left| \int_{\gamma_i(s) } ^{\gamma_i} \varrho_\alpha(x)\, dx \right| \ge c' \big| \gamma_i  - \widehat \gamma_i(s) \big|
\eeq
deterministically, which completes the proof when $N$ is even.

When $N$ is odd, we have $\widehat \gamma_{\lfloor N/2 \rfloor}(s) =-  \widehat \gamma_{\lceil N/2 \rceil}(s)$ by symmetry. If $\widehat \gamma_{\lceil N/2 \rceil}(s) > N^{-1 + \delta}$, then setting $v = \big| \widehat \gamma_{\lceil N/2 \rceil}(s) \big |$ in \eqref{geq} yields and also using the fact that 
\beq\label{ge2}
 N^{-1} = \int_{\widehat \gamma_{\lfloor N/2 \rfloor}(s)}^{\widehat \gamma_{\lceil N/2 \rceil}(s)} d\widehat\mu_s = 2 \int_0^{\widehat \gamma_{\lceil N/2 \rceil}(s)} d\widehat\mu_s \ge \displaystyle\frac{c N^{\delta - 1}}{2},
\eeq

\noindent which is a contradiction. Thus, $\big| \widehat \gamma_{\lfloor N/2 \rfloor}(s) \big | = \big|  \widehat \gamma_{\lceil N/2 \rceil}(s) \big| \le C N^{-1 + \delta}$. We write
 \beq
 \int_{\widehat \gamma_{\lceil N/2 \rceil}(s)}^{\widehat \gamma_i(s)} d \widehat \mu_s =  \int_{ \gamma_{\lceil N/2 \rceil}(s)}^{\widehat \gamma_i(s) } \varrho_\alpha(x)\, dx +  \int_{\widehat \gamma_i(s) } ^{\gamma_i} \varrho_\alpha(x)\, dx.
 \eeq
 Since $\big|  \widehat \gamma_{\lceil N/2 \rceil}(s) \big| \le C
  N^{-1 + \delta}$, and by \eqref{e:rhoacontinuity}, $ \big|\gamma_{\lceil N/2 \rceil}(s) \big| \le C
  N^{-1 + \delta}$, we have
  \begin{equation}
  \big|  \widehat \gamma_{\lceil N/2 \rceil}(s)  - \gamma_{\lceil N/2 \rceil}(s) \big| \le C
  N^{-1 + \delta}.
  \end{equation}
In conjunction with \eqref{e:rhoacontinuity}, this shows 
  \beq
 \int_{\widehat \gamma_{\lceil N/2 \rceil}(s)}^{\widehat \gamma_i(s)} d \widehat \mu_s =  \int_{\widehat \gamma_{\lceil N/2 \rceil}(s)}^{\widehat \gamma_i(s) } \varrho_\alpha(x)\, dx +  \int_{\widehat \gamma_i(s) } ^{\gamma_i} \varrho_\alpha(x)\, dx + O( N^{-1 + \delta}) .
 \eeq
 
\noindent We may then proceed using \eqref{rigidgoal1} as before in \eqref{gammaigammaisestimate} to complete the proof.

The proof of the second claim, \eqref{e:newrigidity}, uses \eqref{concentrationcounting} and proceeds similarly, except there is no need to treat the cases of even and odd $N$ separately. By \eqref{l:hgammarigid} and the discussion following \eqref{shownabove}, there exists $c_2>0$ such that $| i - N/2 | \le c_2 N$ implies $\gamma_i, \lambda_i(s) \in [-c_0, c_0]$ with overwhelming probability. We then write 
 \beq
 \int_{-\infty}^{\widehat \gamma_i(s)} d \widehat\mu_s =  \int_{-\infty}^{ \widehat \gamma_i(s) } d  \mu_s +  \int_{\widehat \gamma_i(s) } ^{ \lambda_i(s)} d  \mu_s.
 \eeq
 Then using \eqref{concentrationcounting}, with overwhelming probability we have 
\beq
N^{\delta / 2 - 1/2} \ge  \left| \int_{\widehat \gamma_i(s) } ^{ \lambda_i(s)} d \mu_s \right|.
\eeq

\noindent Assuming to the contrary that $\big| \lambda_i(s)  - \widehat \gamma_i(s) \big| \ge N^{\delta - 1}$, we use \eqref{e:xtdispersion} again to show that, with overwhelming probability,
\begin{equation}
C N^{\delta / 2 - 1 / 2} \ge \left| \int_{\widehat \gamma_i(s) } ^{ \lambda_i(s)} d \mu_s \right| \ge c' \big| \lambda_i(s)  - \widehat \gamma_i(s) \big|.
\end{equation}

\noindent Thus, $\big| \lambda_i(s)  - \widehat \gamma_i(s) \big| \le C N^{\delta / 2 -1/2}$, which is a contradiction and so $\big| \lambda_i(s)  - \widehat \gamma_i(s) \big| \ge N^{\delta - 1}$. Finally, we estimate 
\begin{equation}
\big| \gamma_i(s)  - \widehat \gamma_i(s) \big| \le \big| \gamma_i(s)  - \lambda_i(s) \big| +  \big| \lambda_i(s)  - \widehat \gamma_i(s) \big|  \le N^{-1+\delta} + C N^{-1/2 + \delta}  \dom N^{-1/2 + \delta}
\end{equation}
using \eqref{e:xtrigidity} to bound $\big| \gamma_i(s)  - \lambda_i(s) \big|$, which proves  \eqref{e:newrigidity}
\end{proof}

\eer

\section{Convergence in distribution} \label{s:appendixdist}

\label{s:cvgdist}

\bep\label{p:dconvergence}

For $\alpha \in (2/3,2) \setminus \mathcal A $, there is a unique limit point $\vstar (E)$ of the sequence of random variables $\big\{\Im R_\star (E + \iu \eta) \big\}_{\eta > 0}$ in the weak topology. For $\alpha \in (1, 2) \setminus \mathcal A$, the conclusions of \Cref{t:thedistribution1}, \Cref{t:main2}, and \Cref{c:median} hold in the sense of convergence in distribution.
\eep

\begin{proof}
We begin by showing that there exist constants $C > 1 > c>0$ such that, for ${z \in \mathbb H}$ with $|z|<c$, the random variable $\rstar(z)$ satisfies the tail bound
\beq\label{e:startail}
\P\big( \Im \rstar( z) > s \big)  \le   \exp\left( - \frac{ s^{\alpha/ (2 - \alpha)} }{C}  \right).
\eeq

\noindent To that end, let $R_1 (z), R_2 (z), \ldots $ denote mutually independent random variables each with law $\rstar (z)$. By the L\'evy--Khintchine formula \eqref{e:lk},
\beq\label{markov2}
\E \Bigg[ \exp \bigg( - t   \Im \sum_{k=1}^\infty \xi_k R_k(z) \bigg) \Bigg]= \exp\Bigg(  - t^{\alpha/2} \Gamma\left( 1  - \frac{\alpha}{2} \right) \E \left[ \big(\Im \rstar (z) \big)^{\alpha/2}\right] \Bigg)\le \exp\left( - \frac{2 t^{\alpha/2}}{C}  \right),\eeq
for some $C >0$, where we used that $ \E \big[ \big( \Im \rstar(z) \big)^{\alpha/2} \big]  > c'$ for $z$ in a neighborhood of $0$ (see \eqref{e:eipi4}). 

We now compute, using \eqref{e:rde}, 
\begin{align}\label{markov1}
\P\big( \Im \rstar(z) > C t^{ 1 - \alpha/2}  \big)  &\le \P\left(  \Im \sum_{k=1}^\infty \xi_k R_k(z) < \frac{t^{\alpha/2 -1  } }{C} \right)  \\&= \P\left(  \exp \bigg( - t   \Im \sum_{k=1}^\infty  \xi_k R_k(z) \bigg)  > \exp\left({- \frac{t^{\alpha/2}}{C}}  \right)\right).
\end{align}
In the first equality, we used $\Im R_k(z) >0$. 
Now applying Markov's inequality to \eqref{markov2} and \eqref{markov1}, we obtain
\begin{align}\label{markov3}
\P\big( \Im \rstar(z) > C t^{ 1 - \alpha/2}  \big)  \le \exp\left( - \frac{t^{\alpha/2}}{C} \right).
\end{align}
Setting $s = C t^{ 1 - \alpha/2}$,  we obtain \eqref{e:startail} for a new value of $C$.

Let $\mathcal R_\star (E)$ be the limit point in the weak topology of ${\{ \Im \rstar(E + \iu \eta) \}_{\eta > 0}}$ from \Cref{d:arbitrary}.
Note that since the tail bound \eqref{e:startail} holds for $\Im \rstar(z)$ uniformly in $z$, it also holds for
${\mathcal R_\star}(E)$. 

Using the bound \eqref{e:startail} for ${\mathcal R_\star}(E)$, we see that for all $k$, the $k$-th moment of ${\mathcal R_\star}(E)$ is bounded by $(Ck)^{k (2-\alpha) /\alpha}$ for some $C>0$. Therefore, the series $\sum_{k\ge 1 } \E \big[ {\mathcal R_\star}(E)^k  \big]^{-1/2k}$ diverges when $\alpha \in (2/3,2)$. By Carleman's condition for positive random variables (the Stieltjes moment problem) \cite[p. 21]{shohat1943problem}, this implies that ${\mathcal R_\star}(E)$ is determined by its moments when $\alpha \in (2/3 ,2) $. By \Cref{p:tightness}, these moments are the same for any subsequential limit of $\{ \Im \rstar(z) \}_{\eta >0}$, so ${\mathcal R_\star}(E)$ is only possible subsequential limit. 
Therefore the sequence converges in distribution to ${\mathcal R_\star}(E)$.

 Similar reasoning may be applied to the quantities $\mathcal N^2 \cdot \mathcal R_\star(E)$ appearing in \Cref{t:thedistribution1}, \Cref{t:main2}, and \Cref{c:median}. By Stirling's formula, the $k$-th moment of $\mathcal N^2$ is bounded by  $(Ck)^k$ for some $C >0$, so the moments of $\mathcal N^2 \cdot \mathcal R_\star(E)$ are bounded by  $(Ck)^{ k(1 + (2-\alpha) /\alpha ) }$. For $\alpha \in (1,2)$, $1 + (2-\alpha) /\alpha < 2$ and Carleman's condition applies. This completes the proof.
\end{proof}

\section{Quantum unique ergodicity of eigenvectors} 

\label{matrixeigenvectors}

For any $a_N\colon [1, N] \cap\mathbb  N  \rightarrow [-1,1]$ we denote by $|a_N| = \big|  1 \le i \le N : a_N(i) \neq 0 \big|$ the cardinality of the integer support of $a_N$. We define $\langle \mathbf{u}_k, a_N \mathbf u_k \rangle = \sum_{i = 1}^N |\mathbf u_k(i)|^2 a_N(i)$.
\bec
For all $\alpha\in (0,2) \setminus \mathcal A$, there exists $c = c(\alpha)>0$ such that the following holds. Fix any index sequence $k=k(N)$ such that $\lim_{N\rightarrow\infty} \gamma_k = E$ for some $E \in \mathbb R$ satisfying $|E| <c$. Then for every $\delta >0$, for any  $a_N\colon [1, N] \cap \mathbb N  \rightarrow [-1,1]$ such that $\sum_{i=1}^\infty a_N(i) = 0$ and $|a_N| \rightarrow \infty$,
\beq
\P\Bigg( \left|  \frac{N}{|a_N|} \langle \mathbf u_k, a_N \mathbf u_k \rangle > \delta \right|  \Bigg) \rightarrow 0.
\eeq
\eec
\begin{proof}
	Letting $m_2 = \E \big[ \mathcal{U}_{\star} (E) \big]$, we compute
	\begin{multline}\label{e:secondmomentque}
	\E \bigg[  \Big( \frac{N}{|a_N|} \langle \mathbf u_k, a_N \mathbf u_k \rangle \Big)^2 \bigg] = \frac{1}{|a_N|^2}  \E \left[  \left(  \sum_{i = 1}^N a_N(i) \big( N |\mathbf u_k(i)|^2 - m_2 \big) \right)^2  \right]\\ \le \max_{\substack{i_1, i_2 \in [1, N] \\ i_1 \neq i_2}} \E \Big[ \big( N | \mathbf u_k(i_1)|^2 - m_2 \big) \big( N |\mathbf u_k(i_2)|^2 - m_2   \big) \Big]  + \frac{1}{|a_N|} \max_{i \in [1, N]} \E \left[  \left( N|{\mathbf u}_k(i)|^2 - m_2\right)^2 \right].
	\end{multline}
	The conclusion applies after applying Markov's inequality to the second moment computed in \eqref{e:secondmomentque} and applying \Cref{t:main2}. The hypothesis that  $|a_N| \rightarrow \infty$ ensures the second term in the second moment computation tends to zero.
\end{proof}

\bibliography{levy}
\bibliographystyle{abbrv}

\end{document}